\def\TT{\mathbf{T}}
\theoremstyle{plain}
\newtheorem{theorem}{Theorem}[section]
\newtheorem{lemma}[theorem]{Lemma}
\newtheorem{proposition}[theorem]{Proposition}
\newtheorem{corollary}[theorem]{Corollary}
\newtheorem{definition}[theorem]{Definition}
\newtheorem{problem}[theorem]{Problem}
\theoremstyle{definition}
\newtheorem{example}[theorem]{Example}
\newtheorem{remark}[theorem]{Remark}
\numberwithin{equation}{section} 
\def\card{\operatorname{card}}
\def\supp{\operatorname{supp}}
\def\shift{\operatorname{shift}}
\begin{document}

\title{Polynomial embeddings of unilateral weighted shifts into $2$-variable weighted shifts}

\author{Ra\'ul E. Curto }
\address{Department of Mathematics, University of Iowa, Iowa City, Iowa 52242, USA}
\email{raul-curto@uiowa.edu}

\author{Sang Hoon Lee}
\address{Department of Mathematics, Chungnam National University, Daejeon,
34134, Republic of Korea}
\email{slee@cnu.ac.kr}

\author{Jasang Yoon}
\address{School of Mathematical and Statistical Sciences, The University of
Texas Rio Grande Valley, Edinburg, Texas 78539, USA}
\email{jasang.yoon@utrgv.edu}

\thanks{The second author of this paper was partially supported by NRF
(Korea) grant No. 2020R1A2C1A0100584611.}
\thanks{The third named author was partially supported by a grant from the
University of Texas System and the Consejo Nacional de Ciencia y Tecnolog%
\'{\i}a de M\'{e}xico (CONACYT)}
\subjclass[2010]{Primary 47B20, 47B37, 47A13, 28A50; Secondary 44A60, 47-04,
47A20}
\keywords{polynomial embedding, spherically quasinormal pair,
recursively generated 2-variable weighted shift, Berger measure}

\date{}

\begin{abstract}
Given a bounded sequence $\omega$ of positive numbers, and its associated unilateral weighted shift $W_{\omega}$ acting on the Hilbert space $\ell^2(\mathbb{Z}_+)$, we consider natural representations of $W_{\omega}$ as a $2$-variable weighted shift, acting on the Hilbert space $\ell^2(\mathbb{Z}_+^2)$. \ Alternatively, we seek to examine the various ways in which the sequence $\omega$ can give rise to a $2$-variable weight diagram, corresponding to a $2$-variable weighted shift. \ Our best (and more general) embedding arises from looking at two polynomials $p$ and $q$ nonnegative on a closed interval $I \subseteq \mathbb{R}_+$ and the double-indexed moment sequence $\{\int p(r)^k q(r)^{\ell} d\sigma(r)\}_{k,\ell \in \mathbb{Z}_+}$, where $W_{\omega}$ is assumed to be subnormal with Berger measure $\sigma$ such that $\supp \; \sigma \subseteq I$; we call such an embedding a $(p,q)$-embedding of $W_{\omega}$. \ We prove that every $(p,q)$-embedding of a subnormal weighted shift $W_{\omega}$ is (jointly) subnormal, and we explicitly compute its Berger measure. \ We apply this result to answer three outstanding questions: (i) Can the Bergman shift $A_2$ be embedded in a subnormal $2$-variable spherically isometric weighted shift $W_{(\alpha,\beta)}$? \ If so, what is the Berger measure of $W_{(\alpha,\beta)}$? \ (ii) Can a contractive subnormal unilateral weighted shift be always embedded in a spherically isometric $2$-variable weighted shift? \ (iii) Does there exist a (jointly) hyponormal $2$-variable weighted shift $\Theta(W_{\omega})$ (where $\Theta(W_{\omega})$ denotes the classical embedding of a hyponormal unilateral weighted shift $W_{\omega}$) such that some integer power of $\Theta(W_{\omega})$ is not hyponormal? \ As another application, we find an alternative way to compute the Berger measure of the Agler $j$-th shift $A_{j}$ ($j\geq 2$). \ Our research uses techniques from the theory of disintegration of measures, Riesz functionals, and the functional calculus for the columns of the moment matrix associated to a polynomial embedding.
\end{abstract}

\maketitle

\setcounter{tocdepth}{1}

\tableofcontents


\doublespacing

\section{Introduction}

Given a bounded sequence $\omega$ of positive numbers, and its associated unilateral weighted shift $W_{\omega}$ acting on the Hilbert space $\ell^2(\mathbb{Z}_+)$, we consider natural representations of $W_{\omega}$ as a $2$-variable weighted shift, acting on the Hilbert space $\ell^2(\mathbb{Z}_+^2)$. \ Alternatively, we seek to examine the various ways in which the sequence $\omega$ can give rise to a $2$-variable weight diagram, corresponding to a $2$-variable weighted shift $W_{(\alpha,\beta)}$.\footnote{\ Hereafter, $\mathbb{Z}_+$ denotes the set of nonnegative integers $0,1,2,3,\cdots$.} \ Our best (and more general) embedding arises from looking at two polynomials $p$ and $q$ nonnegative on a closed interval $I \subseteq \mathbb{R}_+$ and the double-indexed moment sequence $\{\int p(r)^k q(r)^{\ell} d\sigma(r)\}_{k,\ell \in \mathbb{Z}_+}$, where $W_{\omega}$ is assumed to be subnormal with Berger measure $\sigma$ such that $\supp \; \sigma \subseteq I$; we call such an embedding a $(p,q)$-embedding of $W_{\omega}$, and denoted as $W_{(\alpha(\omega),\beta(\omega))}$. \ When $p(r) \equiv q(r):=r \quad (r\ge0)$, we recover the \textit{classical embedding} discussed in \cite{CLY2} and \cite{CuYo6}; when $p(r):=r$ and $q(r):=1-r \quad (r \in [0,1])$, we obtain the $2$-variable \textit{spherically isometric embedding}, studied in \cite{CLYJFA}, \cite{CuYo6} and \cite{CuYo7} (here $\alpha_{\mathbf{k}}^2+\beta_{\mathbf{k}}^2=1$ for all $\mathbf{k}\in\mathbb{Z}_+^2$); and when $p(r):=r^2$ and $q(r):=r^3$ we generate a \textit{Neil parabolic embedding}. 

We first prove that every $(p,q)$-embedding of a subnormal weighted shift $W_{\omega}$ is (jointly) subnormal, and we explicitly compute its Berger measure. \ Next, we use polynomial embeddings to give answers to several open problems regarding spherically quasinormal $2$-variable weighted shifts constructed from $W_{\omega }$. \ We do these by appealing to techniques from the theory of disintegration of measures, Riesz functionals, and the functional calculus for the columns of the moment matrix associated to a polynomial embedding. 

In a different direction, we recall that for a unilateral weighted shift $W_{\omega}$ it is well known that the hyponormality of $W_{\omega }$ implies the hyponormality of every integer power $W_{\omega}^{m}\;(m\geq 1)$. \ We use our theory of polynomial embeddings to build a (jointly) hyponormal $2$-variable weighted shift $\Theta(W_{\omega})$ such that some integer power of it is not hyponormal. \ As another application, we find an alternative way to compute the Berger measure of the Agler $j$-th shift $A_{j}$ ($j\geq 2$). \ 

One of the by-products of our research is the fact that some questions pertaining to unilateral weighted shifts (and therefore within the realm of single-variable operator theory) can be answered after rephrasing them in the context of $(p,q)$-embeddings, a topic that rightly belongs in multivariable operator theory.

The organization of the paper is as follows. \ In Section \ref{prelim} we discuss spherical quasinormality, both for commuting pairs of operators and more specifically for $2$-variable weighted shifts. \ These topics require basic knowledge of unilateral and $2$-variable weighted shifts, and of the basic construction of spherically isometric $2$-variable weighted shifts; the interested reader will find a summary of each such topic in the Appendix (Section \ref{appendix}). \ In Section \ref{embed} we introduce polynomial embeddings and we prove one of our main results: every polynomial embedding of a subnormal unilateral weighted shift is subnormal. \ Section \ref{sectionclassical} is devoted to the specific case of the classical embedding. \ We then use the Neil parabolic embedding $r \mapsto (r^2,r^3)$ to study some examples linking the $k$-hyponormality of $W_{\omega}$ to the $k$-hyponormality of the $(2,3)$-power of $\Theta(W_{\omega})$. \ In particular, we use polynomial embeddings to give an alternative proof that hyponormality of $2$-variable weighted shifts is not preserved under powers (cf. \cite{CLY1}, \cite{CLYJFA}). \ Along the way, we exhibit the connection between the respective Berger measures. \ In Section \ref{Bergman} we find a concrete formula for the Berger measure of the spherically isometric embedding of the Bergman shift; this solves Question 5.15 in \cite{CuYo7}. \ In Section \ref{examples}, we first give an alternative solution to Problem \ref{prob1}(ii) that employs techniques from the theory of recursively generated weighted shifts. \ We conclude Section \ref{examples} with two examples that illustrate our solution to Problem \ref{prob1}. \ 

We now present a precise formulation of the central questions of this paper.

\begin{problem}
\label{prob1}\cite{CuYo7} Consider a spherically quasinormal $2$-variable
weighted shift $W_{(\alpha ,\beta )}$ and let $\sigma $ be
the Berger measure of $W_{0}:=\shift (\alpha _{(0,0)},\alpha
_{(1,0)},\cdots )$. \ Since $W_{(\alpha ,\beta )}$ is subnormal (by \cite[Theorem
3.10]{CuYo7}), let $\mu $ be its Berger measure. \medskip \newline
(i) \ Describe $\mu $ in terms of $\sigma $. \medskip \newline
(ii) \ Assume that $W_{0}$ is recursively generated; that is, $\sigma$ is finitely atomic. \ It is known that $\mu
$ is also finitely atomic, and that $\supp \; \mu \subseteq \supp \; \sigma \times (c-\supp \; \sigma )$, where $c>0$ is the constant of spherical quasinormality \cite[Theorem 5.2]{CuYo7}. \ What else can we say? \ Can we
give a concrete formula for the atoms and densities of $\mu $?
\end{problem}

\begin{problem}
\label{quest}\cite{CuYo7} Assume that $W_{0}$ is the Bergman shift $B_+:=\shift(\sqrt{\frac{1}{2}},\sqrt{\frac{2}{3}},
\sqrt{\frac{3}{4}},\cdots )$, and use
Subsection \ref{constr} to build a spherically quasinormal $2$-variable
weighted shift $W_{(\alpha ,\beta )}$ (cf. Figure \ref{Fig 4}). \ Observe that the $j$-th row is identical to the $j$-column, for every $j\geq 0$. \
Note also that $W_{(\alpha ,\beta )}$ is a close relative of the Drury-Arveson $2$-variable
weighted shift, in that the $j$-row of $W_{(\alpha ,\beta )}$ is the Agler $(j+2)$-th shift $A_{j+2}$. \ What is the Berger measure of $W_{(\alpha ,\beta )}$?
\end{problem}


\setlength{\unitlength}{1mm} \psset{unit=1mm}
\begin{figure}[th]
\begin{center}
\begin{picture}(130,70)

\psline{->}(20,14)(35,14)
\put(27,10){$\rm{T}_1$}
\psline{->}(2,35)(2,50)
\put(-3,42){$\rm{T}_2$}

\psline{->}(5,20)(55,20)
\psline(5,35)(53,35)
\psline(5,50)(53,50)
\psline(5,65)(53,65)

\psline{->}(5,20)(5,70)
\psline(20,20)(20,68)
\psline(35,20)(35,68)
\psline(50,20)(50,68)

\put(0,16){\footnotesize{$(0,0)$}}
\put(16,16){\footnotesize{$(1,0)$}}
\put(31,16){\footnotesize{$(2,0)$}}
\put(46,16){\footnotesize{$(3,0)$}}

\put(10,22){\footnotesize{$\sqrt{\frac{1}{2}}$}}
\put(25,22){\footnotesize{$\sqrt{\frac{2}{3}}$}}
\put(40,22){\footnotesize{$\sqrt{\frac{3}{4}}$}}
\put(51,21){\footnotesize{$\cdots$}}

\put(10,37){\footnotesize{$\sqrt{\frac{1}{3}}$}}
\put(25,37){\footnotesize{$\sqrt{\frac{2}{4}}$}}
\put(40,37){\footnotesize{$\sqrt{\frac{3}{5}}$}}
\put(51,36){\footnotesize{$\cdots$}}

\put(10,52){\footnotesize{$\sqrt{\frac{1}{4}}$}}
\put(25,52){\footnotesize{$\sqrt{\frac{2}{5}}$}}
\put(40,52){\footnotesize{$\sqrt{\frac{3}{6}}$}}
\put(51,51){\footnotesize{$\cdots$}}

\put(10,66){\footnotesize{$\cdots$}}
\put(25,66){\footnotesize{$\cdots$}}
\put(40,66){\footnotesize{$\cdots$}}
\put(51,66){\footnotesize{$\cdots$}}

\put(5,26){\footnotesize{$\sqrt{\frac{1}{2}}$}}
\put(5,41){\footnotesize{$\sqrt{\frac{2}{3}}$}}
\put(5,56){\footnotesize{$\sqrt{\frac{3}{4}}$}}
\put(6,66){\footnotesize{$\vdots$}}

\put(20,26){\footnotesize{$\sqrt{\frac{1}{3}}$}}
\put(20,41){\footnotesize{$\sqrt{\frac{2}{4}}$}}
\put(20,56){\footnotesize{$\sqrt{\frac{3}{5}}$}}
\put(21,66){\footnotesize{$\vdots$}}

\put(35,26){\footnotesize{$\sqrt{\frac{1}{4}}$}}
\put(35,41){\footnotesize{$\sqrt{\frac{2}{5}}$}}
\put(35,56){\footnotesize{$\sqrt{\frac{3}{6}}$}}
\put(36,66){\footnotesize{$\vdots$}}


\put(59,65){\footnotesize{The $2$-variable weighted shift $W_{(\alpha,\beta)}$ whose weight}}
\put(59,55){\footnotesize{diagram is shown on the left has the following properties:}}
\put(59,45){\footnotesize{(i)} \ $\alpha_{(k_1,k_2)}^2+\beta_{(k_1,k_2)}^2=1$ for all $(k_1,k_2) \in \mathbb{Z}_+^2.$}
\put(59,35){\footnotesize{(ii)} \ $W_{(\alpha,\beta)}$ \footnotesize{is a spherical isometry.}}
\put(59,25){\footnotesize{(iii)} \ \footnotesize{The weights in the $j$-row correspond to}}
\put(67.5,20){\footnotesize{the Agler $(j+2)$-th shift} $A_{j+2}$.}

\end{picture}
\end{center}
\caption{Weight diagram of the $2$-variable
weighted shift in Problem \ref{quest}.}
\label{Fig 4}
\end{figure}
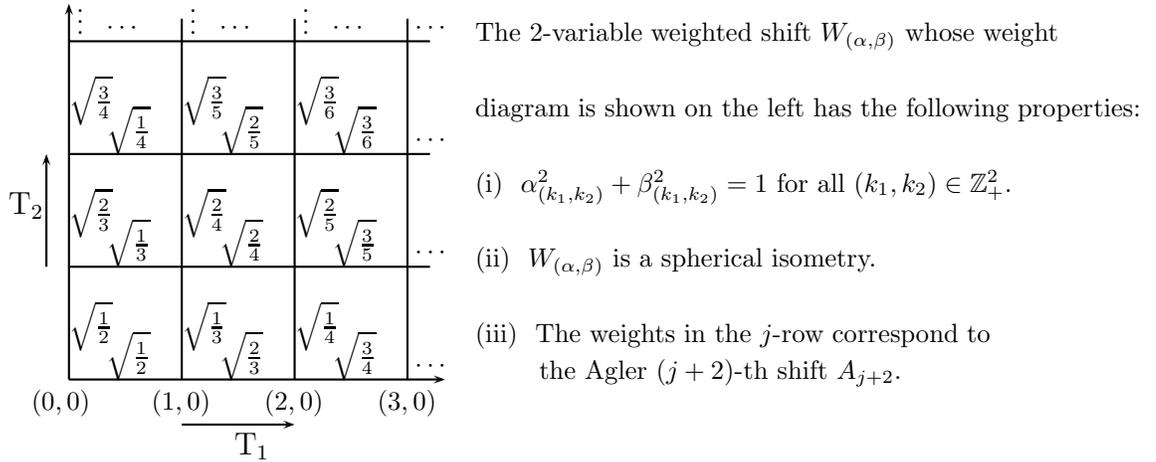

It is now natural to ask:

\begin{problem}
\label{Q1} Given a subnormal unilateral weighted shift $W_{\omega }$, can we always construct
a spherically quasinormal $2$-variable weighted shifts $W_{\left( \alpha(\omega)
,\beta(\omega) \right) }$? \ In those instances when this is possible, we shall use the notation $\mathcal{SQNE}(\omega)$ for the spherically quasinormal embedding.  
\end{problem}

For the next question, recall that a subnormal unilateral weighted shift is recursively generated if and only if its Berger measure is finitely atomic (cf. Subsection \ref{RGWS} in the Appendix).

\begin{problem}
\label{prob2}(i) Let $W_{\omega }$ be a recursively generated weighted shift, and let $m$ be a positive integer. \ Is $W_{\omega }^{m}$ a direct sum of recursively generated weighted shifts?\medskip \newline
(ii) Conversely, assume that $W_{\omega }^{m}$ is a direct sum of recursively
generated weighted shifts for some $m\geq 1$. \ Is $W_{\omega }$ a
recursively generated weighted shift?
\end{problem}

We will prove that Problem \ref{prob2} (i) has a positive answer, but (ii) does not. \ In
fact, $W_{\omega }$ may not even be subnormal. \ We can then ask, under what conditions does Problem \ref{prob2}(ii) yield a positive answer?

For a subnormal $2$-variable weighted shift $W_{(\alpha ,\beta
)}$ it is natural to say that it is \textit{recursively generated} if its Berger measure is finitely atomic. \ We shall see in Theorem \ref{Prop1} that the property of being recursively generated transfers from $W_{\omega}$ to any $(p,q)$-embedding. \ We plan to build on this fact and study recursively generated $2$-variable weighted shifts in detail in a forthcoming paper.

\begin{problem} \label{prob5}
Let $W_{\omega}$ be a hyponormal unilateral weighted shift, and let $W_{\theta(\omega)} \equiv \Theta(W_{\omega})$ denote the $2$-variable weighted shift obtained via the classical embedding, that is, the $(p,q)$-embedding with $p(r) \equiv q(r):=r \; (r \in [0,\left\|W_{\omega}\right\|^2])$. \ Also, let $m$ and $n$ be two positive integers, and consider the power $W_{\theta(\omega)}^{(m,n)}$. \ Is $W_{\theta(\omega)}^{(m,n)}$ hyponormal?
\end{problem}
(Hereafter, we will ordinarily use the notation $W_{\theta(\omega)}$, for three reasons: (i) it easily signals the action of the classical embedding on $1$-variable sequences; (ii) it more clearly denotes that it is a weighted shift; and (iii) it is more concise that allows for a succinct display of the powers.) \ 

\medskip
We conclude this section by stating our main results, which provide answers to the above mentioned problems. 

\begin{theorem}
\label{Th4}Let $W_{\left( \alpha(\omega) ,\beta(\omega) \right) }$ be a $(p,q)$-embedding of
$W_{\omega }$, and let $f(r):=(p(r),q(r))$. \ If $W_{\omega }$ is subnormal with Berger measure $\sigma $ such that 
$\left. p \right|_{\supp \; \sigma} \ge 0$ and $\left. q\right|_{\supp \; \sigma} \ge 0$, then $W_{\left( \alpha(\omega) ,\beta(\omega) \right) }$ is subnormal, with Berger measure $\mu:=\sigma \circ f^{-1}$.
\end{theorem}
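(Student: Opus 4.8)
The strategy is to guess the Berger measure --- it should be the pushforward $\mu:=\sigma\circ f^{-1}$ of $\sigma$ along the curve $f=(p,q)$ --- and then verify it by a direct moment computation, appealing to the measure-theoretic characterization of subnormality for $2$-variable weighted shifts. Recall (see Section~\ref{appendix}, or \cite{CuYo6}) that a bounded commuting $2$-variable weighted shift $W_{(\alpha,\beta)}$ is subnormal if and only if its moment sequence $\gamma_{(k,\ell)}:=\left\|W_{(\alpha,\beta)}^{(k,\ell)}e_{(0,0)}\right\|^2$ (for $(k,\ell)\in\mathbb{Z}_+^2$) admits a positive representing Borel measure on $\mathbb{R}_+^2$; since $W_{(\alpha,\beta)}$ is bounded, such a measure is necessarily compactly supported, and a compactly supported measure is determined by its moments (polynomials being dense in $C(K)$ for every compact $K$), so the representing measure is unique and is called the Berger measure of $W_{(\alpha,\beta)}$. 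By the very definition of the $(p,q)$-embedding, the moment sequence of $W_{(\alpha(\omega),\beta(\omega))}$ is $\gamma_{(k,\ell)}=\int_I p(r)^k q(r)^\ell\,d\sigma(r)$.

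First I would dispose of the bookkeeping about supports. Since $W_{\omega}$ is bounded, $\supp\sigma$ is a compact subset of $\mathbb{R}_+$; as $f$ is continuous, $\mu:=\sigma\circ f^{-1}$ is a positive Borel measure with $\supp\mu\subseteq f(\supp\sigma)$, and the latter is a \emph{compact} subset of $\mathbb{R}_+^2$ precisely because $\left.p\right|_{\supp\sigma}\ge 0$ and $\left.q\right|_{\supp\sigma}\ge 0$. Thus $\mu$ is a compactly supported positive Borel measure on $\mathbb{R}_+^2$, with finite moments of every order.

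The heart of the argument is then the change-of-variables identity for pushforward measures: $\int_{\mathbb{R}^2}g\,d\mu=\int_{\mathbb{R}}(g\circ f)\,d\sigma$ for every nonnegative Borel function $g$ on $\mathbb{R}^2$. Taking $g(s,t)=s^k t^\ell$ gives
\[
\int_{\mathbb{R}_+^2}s^k t^\ell\,d\mu(s,t)=\int_I p(r)^k q(r)^\ell\,d\sigma(r)=\gamma_{(k,\ell)}\qquad\bigl((k,\ell)\in\mathbb{Z}_+^2\bigr),
\]
so $\mu$ is a positive, compactly supported measure representing the moment sequence of $W_{(\alpha(\omega),\beta(\omega))}$. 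By the characterization recalled above, $W_{(\alpha(\omega),\beta(\omega))}$ is subnormal, and by the uniqueness of the representing measure, $\mu=\sigma\circ f^{-1}$ is its Berger measure.

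I do not expect a serious obstacle. Once one knows that the $(p,q)$-embedding is a well-defined bounded commuting $2$-variable weighted shift with the stated moments --- which is exactly what its definition in Section~\ref{embed} provides (boundedness of the weights follows from $\gamma_{(k+1,\ell)}\le\bigl(\max_{r\in\supp\sigma}p(r)\bigr)\,\gamma_{(k,\ell)}$ and the analogous bound for $q$, while the commutativity identity $\alpha_{(k,\ell)}\beta_{(k+1,\ell)}=\beta_{(k,\ell)}\alpha_{(k,\ell+1)}$ is automatic, both sides squared equalling $\gamma_{(k+1,\ell+1)}/\gamma_{(k,\ell)}$) --- the proof is essentially forced. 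The only point deserving care is the measure-theoretic one: correctly computing the pushforward and correctly invoking the uniqueness of the Berger measure (namely, that the boundedness of $W_{(\alpha(\omega),\beta(\omega))}$ forces its Berger measure to be compactly supported and hence, by moment-determinacy, to coincide with our $\mu$).
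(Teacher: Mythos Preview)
Your proposal is correct and follows essentially the same approach as the paper: define $\mu:=\sigma\circ f^{-1}$ as the pushforward measure and verify via the change-of-variables formula that its moments coincide with those in~(\ref{moment1}). The paper's proof is in fact a terse one-liner (``Define \ldots\ $\mu(E):=\sigma(f^{-1}(E))$ \ldots\ Now compute the moments of $\mu$ and compare them to those in~(\ref{moment1})''), so your version simply fills in the details the paper leaves implicit, including the bookkeeping about compact support, boundedness, commutativity, and uniqueness of the Berger measure.
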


\begin{theorem} \label{thmBergman}
The Bergman shift $B_+$ admits a spherically isometric embedding whose Berger measure is normalized arclength on the line segment $\{(s,t)\in\mathbb{R}_+^2:s+t=1\}$. 
\end{theorem}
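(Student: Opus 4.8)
The plan is to derive the theorem as an immediate consequence of Theorem \ref{Th4}. First I would recall that the Bergman shift $B_+=\shift(\sqrt{\tfrac12},\sqrt{\tfrac23},\sqrt{\tfrac34},\cdots)$ is subnormal with Berger measure $\sigma$ equal to Lebesgue measure on $[0,1]$: indeed the moments $\gamma_n:=\int_0^1 r^n\,dr=\tfrac{1}{n+1}$ satisfy $\alpha_n^2=\gamma_{n+1}/\gamma_n=\tfrac{n+1}{n+2}$ for all $n\ge 0$, and $\sigma$ is a probability measure with $\supp\;\sigma=[0,1]$. In particular $B_+$ is contractive, so the spherically isometric embedding is well defined; by definition it is the $(p,q)$-embedding with $p(r):=r$ and $q(r):=1-r$ on $I:=[0,1]$. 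Since $p\ge 0$ and $q\ge 0$ on $\supp\;\sigma=[0,1]$, Theorem \ref{Th4} applies and tells us that this embedding $W_{(\alpha(\omega),\beta(\omega))}$ is subnormal with Berger measure $\mu=\sigma\circ f^{-1}$, where $f(r):=(p(r),q(r))=(r,1-r)$.

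It then remains to compute $\mu$ explicitly. The map $f\colon[0,1]\to\mathbb{R}_+^2$ is an affine bijection onto the segment $L:=\{(s,t)\in\mathbb{R}_+^2:s+t=1\}$, traversing it at constant speed $\lvert f'(r)\rvert=\sqrt{2}$. Hence, by the change-of-variables formula for a line segment parametrized this way, for every Borel set $E\subseteq L$ we get $\mu(E)=\sigma(f^{-1}(E))=\lvert f^{-1}(E)\rvert=\tfrac{1}{\sqrt{2}}\,\mathcal{H}^1(E)$, where $\mathcal{H}^1$ denotes one-dimensional Hausdorff (arclength) measure. Since $\mathcal{H}^1(L)=\sqrt{2}$, the measure $\mu=\tfrac{1}{\sqrt{2}}\,\mathcal{H}^1|_L$ is exactly normalized arclength on $L$, which is the assertion of the theorem.

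I do not expect a genuine obstacle in this argument: the substantive work is carried by Theorem \ref{Th4}, whose hypotheses hold trivially in this case. The only point demanding a bit of care is the final identification, where one must keep track of the parametrization-speed factor $\sqrt{2}$ and confirm that $\mu$ is a probability measure — which it is, since $\sigma$ is a probability measure and $\mathcal{H}^1(L)/\sqrt{2}=1$.
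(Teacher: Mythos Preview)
Your argument is correct. It differs from the paper's route in one notable way: the paper deduces the arclength description from its general Theorem \ref{arclengthA}, whose proof approximates $\sigma$ by finitely atomic measures (Riemann sums), pushes these forward via Theorem \ref{Prop1}, and passes to the $w^*$-limit along partitions of the curve. You instead exploit the special feature that $f(r)=(r,1-r)$ is affine with constant speed $\sqrt{2}$, so the pushforward $\sigma\circ f^{-1}$ can be identified with $\tfrac{1}{\sqrt2}\,\mathcal{H}^1|_L$ by a one-line change-of-variables. Your approach is more elementary and fully adequate for this particular statement; the paper's approach, by contrast, handles arbitrary $(p,q)$-embeddings where the parametrization speed varies, and also (via Theorem \ref{thm62} and Lemma \ref{Lem 1}) identifies $\mathcal{SIE}(B_+)$ with the multiplication pair on the Hardy space of $\mathbb{S}_3$, which your argument does not touch.
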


\begin{theorem}
\label{Prop1}Let $W_{\left( \alpha(\omega),\beta(\omega) \right) }$ be a $(p,q)$-embedding of $W_{\omega}$, where $W_{\omega }$ is
subnormal with finitely atomic Berger measure $\sigma =\sum_{i=0}^{\ell
-1}\rho _{i}\delta _{r_{i}}$, $\sum_{i=0}^{\ell -1}\rho _{i}=1$, $\rho _{i}>0
$, and $0\leq r_{0}<r_{1}<\cdots<r_{\ell-1}$. \ Then, the Berger measure $\mu $ of $
W_{\left( \alpha(\omega) ,\beta(\omega) \right) }$ is
\begin{equation}
\mu =\sum_{i=0}^{\ell -1}\rho _{i}\delta _{\left(p(r_{i}),q(r_{i})\right)
}.  \label{finite measure}
\end{equation}
\end{theorem}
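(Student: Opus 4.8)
The plan is to obtain Theorem \ref{Prop1} directly from Theorem \ref{Th4}, specialized to a finitely atomic Berger measure. First I would verify that the hypotheses of Theorem \ref{Th4} are in force: by the very definition of a $(p,q)$-embedding, the polynomials $p$ and $q$ are nonnegative on a closed interval $I\subseteq\mathbb{R}_+$ with $\supp\;\sigma\subseteq I$, so in the present case $\supp\;\sigma=\{r_0,\dots,r_{\ell-1}\}\subseteq I$ forces $\left.p\right|_{\supp\;\sigma}\ge 0$ and $\left.q\right|_{\supp\;\sigma}\ge 0$. Hence Theorem \ref{Th4} applies and yields that $W_{(\alpha(\omega),\beta(\omega))}$ is subnormal with Berger measure $\mu=\sigma\circ f^{-1}$, where $f(r):=(p(r),q(r))$.

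Second, I would compute this pushforward explicitly. Since $f$ is a polynomial map (hence Borel measurable) and $\sigma=\sum_{i=0}^{\ell-1}\rho_i\delta_{r_i}$, for every Borel set $E\subseteq\mathbb{R}^2$ one has
\begin{equation*}
\mu(E)=\sigma(f^{-1}(E))=\sum_{\{i\,:\,f(r_i)\in E\}}\rho_i=\left(\sum_{i=0}^{\ell-1}\rho_i\,\delta_{(p(r_i),q(r_i))}\right)(E),
\end{equation*}
which is precisely the formula \eqref{finite measure}. Since $\mu$ and $\sum_{i=0}^{\ell-1}\rho_i\,\delta_{(p(r_i),q(r_i))}$ agree on all Borel sets, they coincide as measures.

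I do not expect a genuine obstacle: the statement is essentially a corollary of Theorem \ref{Th4}, and the only real step is the routine identification of the pushforward of a finitely atomic measure under $f$. The one point worth a brief remark is that the strict ordering $0\le r_0<r_1<\cdots<r_{\ell-1}$ does \emph{not} force the points $(p(r_i),q(r_i))$ to be pairwise distinct, because $f$ need not be injective on $\supp\;\sigma$; when some of these points coincide, \eqref{finite measure} should be read as an (unreduced) expression for $\mu$, and collecting equal atoms gives the reduced form. In all cases $\mu$ is finitely atomic with at most $\ell$ atoms, which also records the assertion made before the theorem that a $(p,q)$-embedding of a recursively generated $W_\omega$ is again recursively generated.
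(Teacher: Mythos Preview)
Your proof is correct, and it is more direct than the paper's own argument. The paper does not deduce Theorem \ref{Prop1} as an immediate corollary of the pushforward formula $\mu=\sigma\circ f^{-1}$ from Theorem \ref{Th4}; instead it first makes a (not fully justified) reduction to the monomial case $p(r)=r^m$, $q(r)=r^n$, then shows via a moment computation that $t^m=s^n$ $\mu$-a.e.\ (so $\supp\;\mu$ lies on the curve), and finally verifies the formula by matching moments---first for a single atom $\sigma=\delta_{s_0}$, then by linearity for finite sums. Your route bypasses all of this: once Theorem \ref{Th4} is in hand, the pushforward of $\sum_i\rho_i\delta_{r_i}$ under $f$ is computed in one line. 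The paper's approach has the mild advantage of being more self-contained (it rederives the result from moment identities rather than invoking the abstract pushforward), and it makes the support constraint $\supp\;\mu\subseteq\{(p(r),q(r))\}$ explicit; your approach is shorter, avoids the dubious ``without loss of generality $p,q$ are monomials'' step, and works uniformly for arbitrary $p,q$. Your closing remark about possible coincidences among the $(p(r_i),q(r_i))$ is thoughtful, though note that the paper's standing definition of a $(p,q)$-embedding assumes $f=(p,q)$ is injective on $I$, so in that setting the atoms are automatically distinct.
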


As a consequence, we can generalize Theorem \ref{thmBergman}, as follows.
 
\begin{theorem} \label{arclength}
Let $W_{(\alpha(\omega),\beta(\omega))}$ be a $(p,q)$-embedding of a subnormal weighted shift $W_{\omega}$, and denote by $\mu$ and $\sigma$ the Berger measures of $W_{(\alpha(\omega),\beta(\omega))}$ and $W_{\omega}$, respectively. \ Assume that $\sigma$ is the Lebesgue measure on $[0,1]$. \ Then $d\mu(p(r),q(r))$ is normalized arclength on the curve $C:=\{(p(r),q(r)):0 \le r \le 1\}$.  
\end{theorem}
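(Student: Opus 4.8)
The plan is to derive Theorem~\ref{arclength} directly from Theorem~\ref{Th4}. Put $f(r):=(p(r),q(r))$. Since, by definition, a $(p,q)$-embedding is built from polynomials $p,q$ that are nonnegative on a closed interval $I\subseteq\mathbb{R}_+$ with $\supp\sigma\subseteq I$, here we have $\left.p\right|_{[0,1]}\ge 0$ and $\left.q\right|_{[0,1]}\ge 0$, so Theorem~\ref{Th4} applies verbatim and gives that $W_{(\alpha(\omega),\beta(\omega))}$ is subnormal with Berger measure $\mu=\sigma\circ f^{-1}$. Thus the whole statement reduces to: (a) computing this push-forward when $\sigma$ is normalized Lebesgue measure $\lambda$ on $[0,1]$, and (b) recognizing the result as normalized arclength along $C:=f([0,1])$.

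For (a), I would record the push-forward in the usual testable form: for every bounded Borel $g$ on $\mathbb{R}_+^2$,
\[
\int_{\mathbb{R}_+^2}g\,d\mu=\int_{[0,1]}(g\circ f)\,d\sigma=\int_0^1 g(p(r),q(r))\,dr ,
\]
equivalently $\mu(E)=\lambda\big(\{r\in[0,1]:(p(r),q(r))\in E\}\big)$ for Borel $E$. Two immediate consequences: $\mu(\mathbb{R}_+^2)=\sigma([0,1])=1$, so the word ``normalized'' is automatic; and, $f$ being continuous on the compact set $[0,1]$, $\supp\mu=f([0,1])=C$, so $\mu$ indeed lives on the curve $C$. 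For (b), the displayed identity says precisely that $\mu$, transported back to the parameter $r$, is Lebesgue measure on $[0,1]$ --- that is, $\mu$ is the normalized arclength measure of $C$ read in the parametrization $r\mapsto(p(r),q(r))$. Specializing $p(r)=r$, $q(r)=1-r$ recovers Theorem~\ref{thmBergman}: there $|f'(r)|\equiv\sqrt2$ is constant, so $r$ is, up to the scalar $\sqrt2$, the Euclidean arclength parameter of the segment $\{(s,t):s+t=1\}$, and $\mu$ is literally normalized Euclidean arclength.

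The only point requiring care --- and the main (mild) obstacle --- is the geometric reading of ``arclength on $C$'' when the polynomial map $f$ is not injective on $[0,1]$, since then $C$ is swept out with multiplicity and $\mu$ records it. I would handle this by partitioning $[0,1]$ into the finitely many maximal subintervals on which one of $p,q$ is strictly monotone (a polynomial has only finitely many sign changes in its derivative), applying the change-of-variables identity on each piece, and adding up; in the cases of actual interest --- the spherically isometric map $r\mapsto(r,1-r)$ and the Neil parabolic map $r\mapsto(r^2,r^3)$ --- $f$ is already one-to-one on $[0,1]$, $C$ is a simple arc, and no multiplicity bookkeeping is needed. Everything beyond this is the routine push-forward computation, so the proof is short once Theorem~\ref{Th4} is available.
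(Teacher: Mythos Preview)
Your proof is correct and takes a more direct route than the paper. The paper argues via approximation: it writes Lebesgue measure on $[0,1]$ as a $w^*$-limit of finitely atomic measures $\sum_i (r_{i+1}-r_i)\delta_{\tilde r_i}$ coming from Riemann partitions, invokes the finitely atomic case (Theorem~\ref{Prop1}) to push each approximant forward to $\sum_i (r_{i+1}-r_i)\delta_{(p(\tilde r_i),q(\tilde r_i))}$ on $C$, and then checks that the partition norms on $C$ are controlled by those on $[0,1]$ so that the $w^*$-limit on $C$ is the claimed normalized arclength. You bypass all of this by observing that Theorem~\ref{Th4} already gives $\mu=\sigma\circ f^{-1}$ in one stroke, and that when $\sigma=\lambda$ the push-forward identity $\int g\,d\mu=\int_0^1 g(p(r),q(r))\,dr$ is precisely the parametric arclength measure on $C$. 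Your approach is shorter and conceptually cleaner; the paper's approximation scheme buys an explicit link back to the recursively generated (finitely atomic) case but is not logically necessary once Theorem~\ref{Th4} is in hand. Your remark distinguishing parametric arclength from Euclidean arclength (they agree only when $|f'|$ is constant, as in the spherically isometric embedding $r\mapsto(r,1-r)$) is a genuinely useful clarification of what the theorem is asserting. Note also that the paper's definition of $(p,q)$-embedding already requires $f$ to be injective on $I$, so your multiplicity discussion, while correct, is addressing a case excluded by hypothesis.
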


Related to Problem \ref{prob2}, we have:

\begin{theorem}
\label{Thm2A}If $W_{\omega }$ is subnormal and if $W_{\omega }^{m}$ is a
direct sum of recursively generated weighted shifts, then $W_{\omega }$ is a
recursively generated weighted shift.
\end{theorem}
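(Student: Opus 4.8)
The plan is to use subnormality to replace $W_{\omega}$ by its Berger measure $\sigma$, and then transfer the finite atomicity of one of the $m$-subshifts of $W_{\omega}^{m}$ back to $\sigma$ via the elementary change of variables $t\mapsto t^{m}$.

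First, since $W_{\omega}$ is subnormal it has a Berger measure $\sigma$, i.e. a probability measure supported in $[0,\left\|W_{\omega}\right\|^{2}]$ with $\gamma_{n}:=\omega_{0}^{2}\cdots\omega_{n-1}^{2}=\int t^{n}\,d\sigma(t)$ for all $n\ge0$. Next, recall that $W_{\omega}^{m}$ is the orthogonal direct sum $\bigoplus_{j=0}^{m-1}S_{j}$, where $S_{j}$ is the weighted shift obtained by restricting $W_{\omega}^{m}$ to the invariant subspace spanned by $\{e_{j},e_{j+m},e_{j+2m},\dots\}$. I would single out $S_{0}$: in the canonical basis its weights are $\omega_{km}\cdots\omega_{km+m-1}$ $(k\ge0)$, so its moment sequence telescopes to $\widehat{\gamma}_{n}=\gamma_{nm}=\int (t^{m})^{n}\,d\sigma(t)=\int s^{n}\,d(\sigma\circ\psi^{-1})(s)$, where $\psi(t):=t^{m}$. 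Hence $S_{0}$ is subnormal (as one also sees directly, being the restriction of the subnormal operator $W_{\omega}^{m}$ to an invariant subspace), and by uniqueness of the representing measure for a compactly supported moment sequence, its Berger measure is exactly $\sigma\circ\psi^{-1}$.

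Now invoke the hypothesis: the summand $S_{0}$ is recursively generated and subnormal, hence its Berger measure $\sigma\circ\psi^{-1}$ is finitely atomic, say $\sigma\circ\psi^{-1}=\sum_{i}c_{i}\delta_{s_{i}}$. Since $\psi$ is a homeomorphism of $[0,\left\|W_{\omega}\right\|^{2}]$ onto $[0,\left\|W_{\omega}\right\|^{2m}]$ (continuous, strictly increasing and bijective on $\mathbb{R}_{+}$), pushing forward and pulling back under $\psi$ are mutually inverse bijections between Borel measures, so $\sigma=\sum_{i}c_{i}\delta_{\,s_{i}^{1/m}}$ is finitely atomic. Therefore $W_{\omega}$, being subnormal with finitely atomic Berger measure, is recursively generated, which is precisely Theorem \ref{Thm2A}.

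There is no serious obstacle here; the only points requiring a little care are the identification of the Berger measure of $S_{0}$ with the pushforward $\sigma\circ\psi^{-1}$ (determinacy of the compactly supported Hausdorff moment problem) and the observation that finite atomicity passes \emph{both ways} across $\psi$, which works precisely because $\psi$ is injective on $\mathbb{R}_{+}$. It is worth emphasizing that the subnormality assumption on $W_{\omega}$ is doing genuine work: without it there is no $\sigma$ to argue with, and indeed Problem \ref{prob2}(ii) fails in general, so any valid proof must consume subnormality — and this one does so at the very first step.
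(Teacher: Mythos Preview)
Your proof is correct and follows essentially the same route as the paper: the paper quotes from \cite{CuP} that the Berger measure of the $0$-th summand satisfies $d\sigma_{0}(r)=d\sigma(r^{1/m})$ (i.e.\ $\sigma_{0}=\sigma\circ\psi^{-1}$, exactly your computation), packages the resulting support relations as a separate lemma (Theorem~\ref{Thm1}), and then observes that finite atomicity of $\sigma_{0}$ forces finite atomicity of $\sigma$. Your argument carries out the same pushforward/pullback step directly via the homeomorphism $\psi$, which is slightly more streamlined but not materially different.
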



\section{Notation and Preliminaries} \label{prelim}

\subsection{Spherically quasinormal $n$-tuples of operators} \ Let $\mathcal{H}$ be a complex Hilbert space and let $\mathcal{B}(\mathcal{H}%
)$ denote the algebra of all bounded linear operators on $\mathcal{H}$. 
$\ $%
We say that $T\in \mathcal{B}(\mathcal{H})$ is \textit{normal} if $T^{\ast
}T=TT^{\ast }$, \textit{quasinormal} if $T$ commutes with $T^{\ast }T$,
i.e., $TT^{\ast }T=T^{\ast }T^{2}$, \textit{subnormal} if $T=\left.N\right|_{\mathcal{H}%
} $, where $N$ is normal and $N(\mathcal{H})\subseteq \mathcal{H}$, and
\textit{hyponormal} if $T^{\ast }T\geq TT^{\ast }$.\ \ For the $1$-variable
case, we clearly have that%
\begin{equation*}
\begin{tabular}{l}
normal $\Longrightarrow $ quasinormal $\Longrightarrow $ subnormal $%
\Longrightarrow $ hyponormal.%
\end{tabular}%
\end{equation*}%
\ For $S,T\in \mathcal{B}(\mathcal{H})$ let $[S,T]:=ST-TS$. \ We say that an
$n$-tuple $\mathbf{T}\equiv(T_{1},\cdots ,T_{n})$ of operators on $\mathcal{H}$
is (jointly) \textit{hyponormal} if the operator matrix
\begin{equation*}
\lbrack \mathbf{T}^{\ast },\mathbf{T]:=}\left(
\begin{array}{lll}
\lbrack T_{1}^{\ast },T_{1}] & \cdots & [T_{n}^{\ast },T_{1}] \\
\text{ \thinspace \thinspace \quad }\vdots & \ddots & \text{ \thinspace
\thinspace \quad }\vdots \\
\lbrack T_{1}^{\ast },T_{n}] & \cdots & [T_{n}^{\ast },T_{n}]%
\end{array}%
\right)
\end{equation*}%
is positive on the direct sum of $n$ copies of $\mathcal{H}$ (cf. \cite{Ath}%
, \cite{bridge}). \ The $n$-tuple $\mathbf{T}$ is said to be \textit{normal}
if $\mathbf{T}$ is commuting and each $T_{i}$ is normal and \textit{subnormal }if $\mathbf{T}$ is the restriction of a normal $n$-tuple to a common invariant subspace. \ A commuting $n$-tuple $\mathbf{T}$ is $k$-hyponormal if the tuple 
$(T_1,\cdots,T_n,T_1^2,T_1T_2,\cdots,T_n^2,\cdots,T_1^k,T_1^{k-1}T_2,\cdots,T_n^k)$ is jointly hyponormal.
 
\medskip
Following A. Athavale, S. Podder \cite{AtPo} and J. Gleason \cite{Gle}, we say that \newline
(i) \ $\TT$ is {\it matricially quasinormal} if $T_i$ commutes with $T_j^*T_k$ for all $i,j,k = 1,\cdots,n$; \newline
(ii) \ $\TT$ is {\it (jointly) quasinormal} if $T_i$ commutes with $T_j^*T_j$ for all $i,j = 1,\cdots,n$; and \newline 
(iii) \ $\TT$ is {\it spherically quasinormal} if $T_i$ commutes with 
$$
P:=T_1^*T_1+\cdots+T_n^*T_n,
$$
for $i=1,\cdots,n$. \ Then 
\medskip
\begin{eqnarray}
\textrm{normal } & \implies & \textrm{ matricially quasinormal } \implies  \textrm{ (jointly) quasinormal  } \nonumber \\
& \implies & \textrm{ spherically quasinormal } \implies \textrm{ subnormal (\cite[Proposition 2.1]{AtPo})} \nonumber \\
& \implies & k \textrm{-hyponormal} \implies \textrm{hyponormal.}
\end{eqnarray} \label{implication}

On the other hand, results of R.E. Curto, S.H. Lee and J. Yoon (cf. \cite{CLYJFA}), and of J. Gleason \cite{Gle} show that the reverse implications in (\ref{implication}) do not necessarily hold.

In \cite[Theorem 2.2]{CuYo7}, R.E. Curto and J. Yoon showed that the spherically quasinormal commuting pairs are precisely the fixed points of the spherical Aluthge transform. \ In \cite{CuYo6} it was also shown that every spherically quasinormal $2$-variable weighted shift is a positive multiple of a spherical isometry (see Theorem \ref{Quasinormal}). \ In order to state this result, we need a brief discussion of unilateral and $2$-variable weighted shifts; we refer the reader to Subsections \ref{sub25} and \ref{sub30} of the Appendix.


\subsection{Spherically Quasinormal $2$-variable Weighted Shifts}

In this section we present a characterization of spherical quasinormality for $2$-variable
weighted shifts, which was announced in \cite{CuYo4} and proved in \cite{CuYo6} and \cite{CuYo7}. \ Before we state it, we list some simple facts about quasinormality for $2$-variable weighted shifts.

\begin{remark} (cf. \cite{CLYJFA}) \ We first observe that no $2$-variable weighted shift can be matricially quasinormal, as a simple calculation shows. \ Also, a $2$-variable weighted shift $W_{(\alpha,\beta)}$ is (jointly) quasinormal if and only if $\alpha_{(k_1,k_2)}=\alpha_{(0,0)}$ and $\beta_{(k_1,k_2)}=\beta_{(0,0)}$ for all $k_1,k_2 \ge 0$. \ This can be seen via a simple application of (\ref{commuting}) and (\ref{adjoint}). \ As a result, up to a scalar multiple in each component, a quasinormal $2$-variable weighted shift is identical to the so-called Helton-Howe shift. \ This fact is entirely consistent with the one-variable result: a unilateral weighted shift $W_{\omega}$ is quasinormal if and only if $W_{\omega}=c \cdot U_+$ for some $c>0$. 
\end{remark}

We now recall the class of spherically isometric commuting pairs of
operators (cf. \cite{Ath1}, \cite{AtPo}, \cite{Gle}). \ A commuting pair 
$\mathbf{T}\equiv (T_{1},T_{2})$ is a
{\it spherical isometry} if $T_{1}^{\ast }T_{1}+T_{2}^{\ast }T_{2}=I$.

\begin{lemma}
\label{Sp-sub} \cite[Proposition 2]{Ath1} \ Any spherical isometry is subnormal.
\end{lemma}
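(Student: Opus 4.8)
The plan is to prove that any spherical isometry $\mathbf{T} \equiv (T_1, T_2)$, i.e.\ a commuting pair with $T_1^* T_1 + T_2^* T_2 = I$, is subnormal by exhibiting an explicit commuting normal extension on a larger Hilbert space. First I would dilate each $T_i$ individually to an isometry via the Wold--von Neumann construction, but the subtlety is that the two dilations must be chosen \emph{compatibly} so that the spherical identity is preserved and commutativity persists. The cleanest route is the following: set $D := (I - T_1^* T_1 - T_2^* T_2)^{1/2} = 0$ by hypothesis, which tells us that the pair is already a ``spherical isometry'' in the strongest sense, and then show that the operator $P := T_1^* T_1 + T_2^* T_2 = I$ trivially commutes with $T_1$ and $T_2$; hence $\mathbf{T}$ is spherically quasinormal, and by the implication chain displayed in (\ref{implication}) (specifically, spherically quasinormal $\implies$ subnormal, \cite[Proposition 2.1]{AtPo}) we are done.

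However, since Lemma~\ref{Sp-sub} is attributed to Athavale \cite[Proposition 2]{Ath1} and logically \emph{precedes} the Athavale--Podder result in the development, I would instead give the self-contained dilation argument. The key steps, in order: (1) Observe that a spherical isometry satisfies $\sum_{i} T_i^* T_i = I$, so the row operator $R := \begin{pmatrix} T_1 & T_2 \end{pmatrix} : \mathcal{H} \oplus \mathcal{H} \to \mathcal{H}$ is a coisometry; equivalently the column $\begin{pmatrix} T_1^* \\ T_2^* \end{pmatrix}$ is an isometry. (2) Build the minimal unitary (in fact, normal-tuple) dilation: consider the spherical analogue of the Sz.-Nagy dilation, using the fact that $T_1, T_2$ commute to arrange that the dilating operators $N_1, N_2$ also commute. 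Concretely, one forms $\mathcal{K} := \mathcal{H} \otimes \ell^2(\mathbb{Z}_+)$ (or a direct limit of copies of $\mathcal{H}$) and defines $N_i$ so that $N_i|_{\mathcal{H}}$ compresses back to $T_i$ and $N_1^* N_1 + N_2^* N_2 = I_{\mathcal{K}}$. (3) Check that $\mathcal{H}$ is invariant for each $N_i^*$ (equivalently $\mathcal{K} \ominus \mathcal{H}$ is invariant for each $N_i$), which is what makes $\mathbf{T}$ the restriction of the commuting normal tuple $(N_1, N_2)$ to a common invariant subspace --- i.e.\ subnormal.

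The main obstacle I anticipate is step (2): simultaneously dilating two commuting operators to two commuting \emph{normal} operators while preserving the spherical identity. For a single contraction the Sz.-Nagy dilation is routine, but commuting dilations fail in general (Parrott's counterexample), so the spherical hypothesis $T_1^*T_1 + T_2^*T_2 = I$ must be used essentially, not incidentally. The trick is that this identity forces a rigidity: writing $T_i = V_i C$ for a suitable common ``angular'' part, or passing through the Berger--Coburn--Lebow-type model for row contractions that happen to be row coisometries, one sees the defect spaces line up so that the ampliation is automatically commuting. I would carry this out by first treating the case where $T_1$ (say) is already normal, reducing to a one-operator dilation of $T_2$ relative to the joint structure, and then bootstrapping; alternatively, cite the spherical dilation theorem in the form needed and note that commutativity of $T_1, T_2$ transfers to the dilation precisely because the defect operator $I - T_1^*T_1 - T_2^*T_2$ vanishes. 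Either way, the conclusion is that $\mathbf{T}$ extends to a commuting normal pair, hence is subnormal, which is exactly the statement of Lemma~\ref{Sp-sub}.
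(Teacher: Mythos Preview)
The paper does not prove Lemma~\ref{Sp-sub}; it simply quotes it from Athavale \cite[Proposition 2]{Ath1}. So the benchmark you are being compared against is Athavale's original argument, not anything in the present paper.

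Your first route (observe $P=I$ commutes with everything, hence spherically quasinormal, hence subnormal by \cite[Proposition~2.1]{AtPo}) is formally valid inside this paper, since both implications are taken as black boxes from the literature. Your worry about circularity is legitimate, though: the Athavale--Podder proof that spherically quasinormal tuples are subnormal proceeds essentially by reducing to the spherical-isometry case and invoking \cite{Ath1}, so this route is indeed circular at the level of the underlying literature.

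Your second route, however, has a genuine gap. You correctly flag that the hard part is step~(2) --- producing \emph{commuting} normal dilations $N_1,N_2$ --- and you correctly recall that Parrott's example blocks any naive Sz.-Nagy approach. But then you do not actually construct anything: phrases like ``the defect spaces line up'' and ``the identity forces a rigidity'' are promissory notes, not arguments. In particular, forming $\mathcal{K}=\mathcal{H}\otimes\ell^2(\mathbb{Z}_+)$ and ``defining $N_i$ so that $N_1^*N_1+N_2^*N_2=I_{\mathcal{K}}$'' does not specify $N_i$, does not show they commute, and does not show they are normal. The reduction ``first treat the case where $T_1$ is already normal'' is also not a reduction: nothing in the hypothesis lets you assume that.

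Athavale's actual proof in \cite{Ath1} does not build an explicit dilation at all. He uses the hereditary functional calculus: from $\sum_i T_i^*T_i=I$ and commutativity of the $T_i$ one checks that the map sending a polynomial $p(z,\bar z)$ to its ``hereditary'' evaluation $p(T,T^*)$ is positive on polynomials nonnegative on the unit sphere $\partial\mathbb{B}_n$ (equivalently, one verifies the Athavale/Agler positivity conditions characterizing joint subnormality). Stinespring's theorem then supplies the normal extension abstractly. If you want a self-contained proof, that is the line to pursue; the bare-hands dilation you sketch is not the standard argument and, as written, is not a proof.
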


\begin{theorem}
(\cite[Theorem 3.1]{CLYJFA}; cf. \cite[Lemma 10.3]{CuYo6}) \  
\label{Quasinormal} \ For a commuting $2$-variable weighted shift $W_{\left( \alpha
,\beta \right) }\equiv\left( T_{1},T_{2}\right) $, the following statements are
equivalent:\newline
(i) $\ W_{\left( \alpha ,\beta \right) }$ is a spherically quasinormal $2$-variable weighted shift; \newline
(ii) \ (algebraic condition) \ $T_{1}^{\ast }T_{1}+T_{2}^{\ast }T_{2}=c I$, for some constant $c$; \newline
(iii) \ (weight condition) \ for all $(k_{1},k_{2})\in \mathbb{Z}_{+}^{2}$, $%
\alpha _{(k_{1},k_{2})}^{2}+\beta _{(k_{1},k_{2})}^{2}=c$, for some constant $c>0$; \newline
(iv) \ (moment condition) \ for all $(k_{1},k_{2})\in \mathbb{Z}_{+}^{2}$, 
$\gamma_{(k_1+1,k_2)}+\gamma_{(k_1,k_2+1)}=c\gamma_{(k_1,k_2)}$, for some constant $c>0$.
\end{theorem}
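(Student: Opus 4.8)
The plan is to route everything through the diagonal operator $P := T_1^{\ast}T_1+T_2^{\ast}T_2$ and the moment data, using that each $T_i^{\ast}T_i$ acts diagonally on the canonical orthonormal basis $\{e_{\mathbf{k}}\}_{\mathbf{k}\in\mathbb{Z}_+^2}$ of $\ell^2(\mathbb{Z}_+^2)$. Concretely, $T_1^{\ast}T_1 e_{(k_1,k_2)}=\alpha_{(k_1,k_2)}^2 e_{(k_1,k_2)}$ and $T_2^{\ast}T_2 e_{(k_1,k_2)}=\beta_{(k_1,k_2)}^2 e_{(k_1,k_2)}$, so $P$ is diagonal with eigenvalue $\lambda_{(k_1,k_2)}:=\alpha_{(k_1,k_2)}^2+\beta_{(k_1,k_2)}^2$ on $e_{(k_1,k_2)}$. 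From this, two of the equivalences are immediate: (ii) $\Leftrightarrow$ (iii) is just the statement that a diagonal operator equals $cI$ iff every diagonal entry equals $c$ (and then automatically $c=\lambda_{(0,0)}=\alpha_{(0,0)}^2+\beta_{(0,0)}^2>0$), while (ii) $\Rightarrow$ (i) is trivial since any operator commutes with a scalar multiple of the identity.

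The implication carrying the actual content is (i) $\Rightarrow$ (ii). Assuming $T_i$ commutes with $P$ for $i=1,2$, I would apply both sides of $T_1P=PT_1$ to $e_{(k_1,k_2)}$: the left-hand side is $\lambda_{(k_1,k_2)}\,\alpha_{(k_1,k_2)}\,e_{(k_1+1,k_2)}$ and the right-hand side is $\alpha_{(k_1,k_2)}\,\lambda_{(k_1+1,k_2)}\,e_{(k_1+1,k_2)}$; since every weight $\alpha_{(k_1,k_2)}$ is strictly positive, this forces $\lambda_{(k_1,k_2)}=\lambda_{(k_1+1,k_2)}$. The analogous computation with $T_2P=PT_2$ gives $\lambda_{(k_1,k_2)}=\lambda_{(k_1,k_2+1)}$. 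Because $\mathbb{Z}_+^2$ is connected under the moves $(k_1,k_2)\mapsto(k_1+1,k_2)$ and $(k_1,k_2)\mapsto(k_1,k_2+1)$ (any two lattice points are joined by a finite staircase path), $\lambda_{\mathbf{k}}$ is a single constant $c>0$, i.e. $P=cI$.

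For (iii) $\Leftrightarrow$ (iv), I would invoke the standard relations between weights and moments $\gamma_{\mathbf{k}}:=\|T_1^{k_1}T_2^{k_2}e_{(0,0)}\|^2$ for a commuting $2$-variable weighted shift (see the Appendix, Section \ref{appendix}), namely $\gamma_{(k_1+1,k_2)}=\alpha_{(k_1,k_2)}^2\gamma_{(k_1,k_2)}$ and $\gamma_{(k_1,k_2+1)}=\beta_{(k_1,k_2)}^2\gamma_{(k_1,k_2)}$, which are unambiguous precisely thanks to the commutativity condition (\ref{commuting}). Adding the two yields $\gamma_{(k_1+1,k_2)}+\gamma_{(k_1,k_2+1)}=\big(\alpha_{(k_1,k_2)}^2+\beta_{(k_1,k_2)}^2\big)\gamma_{(k_1,k_2)}$, and since $\gamma_{(k_1,k_2)}>0$ this equals $c\,\gamma_{(k_1,k_2)}$ for all $\mathbf{k}$ exactly when $\alpha_{(k_1,k_2)}^2+\beta_{(k_1,k_2)}^2=c$ for all $\mathbf{k}$. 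Combining, (i) $\Rightarrow$ (ii) $\Leftrightarrow$ (iii) $\Leftrightarrow$ (iv) and (ii) $\Rightarrow$ (i), so all four statements are equivalent.

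The argument is essentially bookkeeping, and I do not anticipate a serious obstacle; the only point requiring minor care is the connectivity step in (i) $\Rightarrow$ (ii), where constancy of $\lambda$ along every row and every column of the weight diagram must be propagated to global constancy. An alternative, less self-contained route to (i) $\Rightarrow$ (ii) would be to quote the characterization of spherically quasinormal pairs as the fixed points of the spherical Aluthge transform (\cite[Theorem 2.2]{CuYo7}), but the direct diagonal computation above is what I would present.
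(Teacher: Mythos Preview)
Your argument is correct in every step: the diagonal computation of $P$ on the basis $\{e_{\mathbf{k}}\}$, the propagation of $\lambda_{\mathbf{k}}$ via $T_iP=PT_i$, and the weight--moment identities all go through exactly as you describe. Note, however, that the paper does not actually prove this theorem; it is quoted from \cite[Theorem 3.1]{CLYJFA} and \cite[Lemma 10.3]{CuYo6} without a proof in the present text, so there is no in-paper argument to compare against --- your self-contained proof is precisely the standard elementary one and would be appropriate to include.
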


\begin{corollary} \label{subnormal}
Any spherically quasinormal $2$-variable weighted shift is subnormal.
\end{corollary}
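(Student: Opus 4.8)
The plan is to derive Corollary \ref{subnormal} as an immediate consequence of Theorem \ref{Quasinormal} together with Lemma \ref{Sp-sub}. First I would invoke the equivalence (i)$\Leftrightarrow$(iii) in Theorem \ref{Quasinormal}: if $W_{(\alpha,\beta)} \equiv (T_1,T_2)$ is spherically quasinormal, then there exists a constant $c>0$ such that $\alpha_{(k_1,k_2)}^2 + \beta_{(k_1,k_2)}^2 = c$ for all $(k_1,k_2) \in \mathbb{Z}_+^2$; equivalently, by (ii), $T_1^*T_1 + T_2^*T_2 = cI$. If $c = 1$ we are already looking at a spherical isometry and Lemma \ref{Sp-sub} applies directly. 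For general $c>0$, I would rescale: set $S_i := \frac{1}{\sqrt c}\, T_i$ for $i=1,2$. Then $(S_1,S_2)$ is again a commuting $2$-variable weighted shift (scaling all weights by $1/\sqrt c$ preserves commutativity), and $S_1^*S_1 + S_2^*S_2 = \frac{1}{c}(T_1^*T_1 + T_2^*T_2) = I$, so $(S_1,S_2)$ is a spherical isometry.

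Next I would apply Lemma \ref{Sp-sub} to conclude that $(S_1,S_2)$ is subnormal, i.e., it is the restriction of a commuting normal pair $(N_1,N_2)$ to a common invariant subspace $\mathcal{H}$. Finally, I would undo the rescaling: the pair $(\sqrt c\, N_1, \sqrt c\, N_2)$ is still a commuting normal pair (a positive scalar multiple of each normal operator is normal, and commutativity is preserved), it leaves $\mathcal{H}$ invariant, and its restriction to $\mathcal{H}$ is exactly $(\sqrt c\, S_1, \sqrt c\, S_2) = (T_1,T_2) = W_{(\alpha,\beta)}$. Hence $W_{(\alpha,\beta)}$ is subnormal.

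There is essentially no obstacle here — the corollary is a packaging of results already in hand. The only minor point to be careful about is the trivial but necessary observation that subnormality of an $n$-tuple is invariant under multiplication of all components by a common positive scalar, which follows because the normal extension can be rescaled by the same scalar. One could also cite the displayed chain of implications after (\ref{implication}) (specifically ``spherically quasinormal $\implies$ subnormal (\cite[Proposition 2.1]{AtPo})'') for an even more direct proof, but the argument via Theorem \ref{Quasinormal} and Lemma \ref{Sp-sub} is self-contained within the $2$-variable weighted shift setting and makes the scaling structure explicit, so I would present that one.
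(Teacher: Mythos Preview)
Your proposal is correct and is precisely the argument the paper intends: the corollary is stated without proof immediately after Theorem~\ref{Quasinormal} and Lemma~\ref{Sp-sub}, and the surrounding text (``every spherically quasinormal $2$-variable weighted shift is a positive multiple of a spherical isometry'') makes clear that the intended derivation is exactly the rescaling you describe. Your alternative remark about citing the implication chain via \cite[Proposition 2.1]{AtPo} also matches what the paper records.
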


It is now natural to ask:

\begin{problem}
\label{Q1A} Given a subnormal unilateral weighted shift $W_{\omega }$, can we always construct
a spherically quasinormal $2$-variable weighted shifts $W_{\left( \alpha(\omega),\beta(\omega) \right) }$?   
\end{problem}

In particular, if $W_{\omega }=B_{+}$, the Bergman
shift, or if $W_{\omega }=\shift (a,b,b,\cdots )$ with $0<a<b<1$, can we construct a spherically quasinormal $2$-variable weighted shift $%
W_{\left( \alpha(\omega) ,\beta(\omega) \right) }$? \ For the case of the Bergman shift, Figure \ref{Fig 4} shows the desired spherical isometry. \ Thus, in the case of the Bergman shift, we can obtain a spherically isometric embedding, which we will denote by $\mathcal{SIE}(\omega)$. \ While this can be established via a detailed calculation, we shall prove in Section \ref{embed} a much more general result, linking concretely the Berger measures of $W_{\omega}$ and $%
W_{\left( \alpha(\omega) ,\beta(\omega) \right) }$. \ For the second shift, observe that its Berger measure is $(1-(\frac{a}{b})^2)\delta_0+(\frac{a}{b})^2\delta_{b^2}$. \ From this one can show that $W_{\omega}$ does admit a spherically isometric extension, whose Berger measure is $(1-(\frac{a}{b})^2)\delta_{(0,1)}+(\frac{a}{b})^2\delta_{(b^2,1-b^2)}$.  


\subsection{Construction of Spherically Quasinormal $2$-variable Weighted Shifts} \label{constr}

As observed in \cite[Section 4]{CuYo7}, within the class of $2$-variable weighted shifts there is a simple description of spherical isometries, in terms of the weight sequences $\alpha \equiv \{\alpha_{(k_1,k_2)}\}$ and $\beta\equiv \{\beta_{(k_1,k_2)}\}$. \ Indeed, since spherical isometries are (jointly) subnormal, we know that the unilateral weighted shift associated with each row is subnormal, and in particular the $0$-th row in the weight diagram corresponds to a contractive subnormal unilateral weighted shift; let us denote its weights by $\omega \equiv \{\alpha_{(k,0)}\}_{k=0,1,2,\cdots}$ and its Berger measure by $\sigma$. \ Also, in view of Theorem \ref{Quasinormal} we can assume that $c=1$. \ We may also assume, without loss of generality, that $\omega$ is strictly increasing. \ Using the identity 
\begin{equation} \label{sphericalidentity}
\alpha_{\mathbf{k}}^2+\beta_{\mathbf{k}}^2=1 \; \; \; (\mathbf{k} \in \mathbb{Z}_+^2),
\end{equation}
we immediately see that $\beta_{(k,0)}=\sqrt{1-\alpha_{k,0}^2}$ for $k=0,1,2,\cdots$. \ With these new values at our disposal, we can use the commutativity property (\ref{commuting}) to generate the values of $\alpha$ in the first row; that is, 
\begin{equation} \label{alpharelation}
\alpha_{(k,1)}=\alpha_{(k,0)}\beta_{(k+1,0)}/\beta_{(k,0)}.
\end{equation}
Let us briefly pause to observe that, starting with the sequence $\{\alpha_{(k,0)}\}_{k \ge 0}$ for the $0$-th row, we have been able to generate the sequence $\tau \equiv \{\alpha_{(k,1)}\}_{k \ge 0}$ for the first row. \ Now, the $0$-row sequence came with a Berger measure $\sigma$; does the new sequence have a Berger measure? \ Let us look at the moments of $\tau$: $\gamma_0(W_{\tau})=1$ and, for $k \ge 1$, 
\begin{eqnarray*}
\gamma_k(W_{\tau})&=&\alpha_{(0,1)}^2\alpha_{(1,1)}^2 \cdots \alpha_{(k-1,1)}^2 \\
											&=& \frac{1}{\beta_{(0,0)}^2}\alpha_{(0,0)}^2\alpha_{(1,0)}^2 \cdots \alpha_{(k-1,0)}^2\beta_{(k,0)}^2 \quad (\textrm{by }(\ref{alpharelation})) \\
											&=& \frac{1}{\beta_{(0,0)}^2}\gamma_k(W_{\omega})\beta_{(k,0)}^2 \\
											&=& \frac{1}{\beta_{(0,0)}^2}\gamma_k(W_{\omega})(1-\alpha_{(k,0)}^2) \quad (\textrm{by }(\ref{sphericalidentity})) \\
											&=& \frac{1}{\beta_{(0,0)}^2}(\gamma_k(W_{\omega})-\gamma_{k+1}(W_{\omega})) \\
											&=& \frac{1}{\beta_{(0,0)}^2}(\int r^k d\sigma(r) - \int r^{k+1} d\sigma(r)) \\
											&=& \frac{1}{\beta_{(0,0)}^2} \int r^k(1-r) d\sigma(r).
\end{eqnarray*}
It is now evident that $\tau$ does have a Berger measure $\zeta$, given by $d\zeta(r) = \frac{1}{\beta_{(0,0)}^2} (1-r) d\sigma(r)$.
\ This is entirely consistent with the fact that in any subnormal $2$-variable weighted shift with Berger measure 
$d\mu(s,t)$, the Berger measures $\xi_1$ and $\xi_0$ of the first and $0$-th rows, respectively, are related by the equation 
$d\xi_1(s)=\frac{1}{\gamma{(0,1)}}t\xi_0(s)$. \ Also, note that for the new sequence, $\alpha_{(k,1)} < \alpha_{(k,0)} < 1$ for all $k \ge 0$.

We can now repeat the algorithm, and calculate the weights $\beta_{(k,1)}$ for $k=0,1,2,\cdots$, again using the identity (\ref{sphericalidentity}). \ This in turn leads to the $\alpha$ weights for the second row, and so on. \ For more on this construction, the reader is referred to \cite{CuYo7}. \ In particular, it is worth noting that the construction may stall if the sequence $\{\alpha_{(k,0)}\}_{k \ge 0}$ is not strictly increasing (see Proposition \ref{Prop100}). \ Moreover, the fact that we can determine concretely the Berger measures of each row (and each column, if we were to start with the $0$-column and apply the algorithm!) does not guarantee the existence of a Berger measure for the $2$-variable weighted shift. \ That comes from either Corollary \ref{subnormal} or the more general Theorem \ref{Th4} (proved in Section \ref{embed}). \ As we know from \cite{CuYo2}, it is possible for a commuting $2$-variable weighted shift to have rows and columns with mutually absolutely continuous Berger measures, and still not be (jointly) subnormal. \ Thus, in the case of $2$-variable weighted shifts, the Lifting Problem for Commuting Subnormals entails more than the subnormality of rows and columns (see for instance \cite{Cur}, \cite{CuYo1}, \cite{CuYo2}).  

\begin{proposition} \label{Prop100} (\cite[Proposition 12.14]{Cur})
Let 
$$
\alpha_{(0,0)}:=\sqrt{p},  \; \; \alpha_{(1,0)}:=\sqrt{q}, \; \;  \alpha_{(2,0)}:=\sqrt{r}  \; \; \textrm{ and } \; \alpha_{(3,0)}:=\sqrt{r},
$$
 and assume that $0<p<q<r<1$. \  Then the algorithm described in this section fails at some stage.  \ As a consequence, there does not exist a spherical isometry interpolating these initial data.
\end{proposition}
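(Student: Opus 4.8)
The plan is to trace the algorithm of Subsection \ref{constr} explicitly for the given four initial weights and show that at a finite stage one is forced to take a square root of a negative number (or, what amounts to the same, that one of the $\alpha$-weights produced exceeds $1$, or that a would-be $\beta$-weight squared is negative). The key algebraic engine is the pair of identities \eqref{sphericalidentity} and \eqref{alpharelation}: from the $0$-th row weights $\alpha_{(k,0)}$ one sets $\beta_{(k,0)}^2 = 1-\alpha_{(k,0)}^2$, and then propagates upward via $\alpha_{(k,1)} = \alpha_{(k,0)}\beta_{(k+1,0)}/\beta_{(k,0)}$, iterating row by row. The construction \emph{fails} precisely when some $\beta_{(k,j)}^2 = 1-\alpha_{(k,j)}^2$ comes out negative, i.e. when the propagation forces $\alpha_{(k,j)} > 1$.

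\textbf{Step 1.} Write down $\beta_{(k,0)}^2 = 1-\alpha_{(k,0)}^2$ for $k=0,1,2,3,\dots$: one gets $1-p, 1-q, 1-r, 1-r, \dots$ (the sequence is eventually constant equal to $1-r$ because $\alpha_{(k,0)}=\sqrt r$ for $k\geq 2$). \textbf{Step 2.} Apply \eqref{alpharelation} to compute the first-row weights: $\alpha_{(0,1)}^2 = p\,\frac{1-q}{1-p}$, $\alpha_{(1,1)}^2 = q\,\frac{1-r}{1-q}$, $\alpha_{(2,1)}^2 = r\,\frac{1-r}{1-r} = r$, and $\alpha_{(k,1)}^2 = r$ for all $k\geq 2$. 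Note the first row is \emph{not} strictly increasing toward its tail in general, and more importantly $\alpha_{(k,1)}=\sqrt r$ for $k\geq 2$, i.e. the same tail pattern as the $0$-th row but now the "new" entry $\alpha_{(1,1)}^2 = q(1-r)/(1-q)$ sits just before it. \textbf{Step 3.} Iterate: by induction on the row index $j$, show $\alpha_{(k,j)}^2 = r$ for all $k \geq 2$, while the entry $\alpha_{(1,j)}^2$ evolves according to $\alpha_{(1,j+1)}^2 = \alpha_{(1,j)}^2 \cdot \beta_{(2,j)}^2/\beta_{(1,j)}^2 = \alpha_{(1,j)}^2 \cdot (1-r)/(1-\alpha_{(1,j)}^2)$. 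Set $x_j := \alpha_{(1,j)}^2$, so $x_0 = q(1-r)/(1-q) \in (0,1)$ and $x_{j+1} = x_j(1-r)/(1-x_j)$. \textbf{Step 4 (the crux).} Analyze the recursion $x_{j+1} = f(x_j)$ with $f(x) = x(1-r)/(1-x)$ on $(0,1)$. The map $f$ has fixed points $x=0$ and $x = 1-(1-r) = r$; since $f'(0) = 1-r < 1$, the origin is attracting, and $r$ is repelling. A short computation shows that if $x_j > r$ then $x_{j+1} > x_j$ (indeed $f(x)-x = x\frac{x-r}{1-x} > 0$ for $r<x<1$), so once $x_j$ exceeds $r$ the sequence is strictly increasing; and since there is no fixed point in $(r,1)$, the $x_j$ cannot converge in $(r,1)$, hence must escape — that is, some $x_J \geq 1$, at which point $\beta_{(1,J)}^2 = 1-x_J \leq 0$ and the algorithm fails. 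So the remaining task is to verify $x_0 > r$, i.e. $q(1-r)/(1-q) > r$, equivalently $q(1-r) > r(1-q)$, equivalently $q - qr > r - qr$, i.e. $q > r$ — but we are given $q < r$! So this direct path stalls, which tells me the failure is more subtle and I have mis-located it.

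\textbf{Reconsidering: where the failure really lives.} Since $q<r$ we have $x_0 < r$, so the column-$1$-spot recursion $x_j \to 0$ harmlessly. The obstruction must instead come from the strict-increase hypothesis feeding \eqref{alpharelation}: recall the construction \emph{requires} each row's weight sequence to be strictly increasing (the remark after the algorithm, and Proposition \ref{Prop100} itself, flag this). So the real plan is: (a) compute the first row $\alpha_{(\cdot,1)}$ explicitly as in Step 2; (b) observe that strict monotonicity of the $0$-th row (which holds: $p<q<r=r$ — wait, $\alpha_{(2,0)}=\alpha_{(3,0)}=\sqrt r$, so the $0$-th row is already \emph{not} strictly increasing!). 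That is the point. The $0$-th row is $\sqrt p < \sqrt q < \sqrt r = \sqrt r = \cdots$, which is \emph{non-decreasing but not strictly increasing}. Then \eqref{alpharelation} gives $\alpha_{(2,1)} = \alpha_{(2,0)}\beta_{(3,0)}/\beta_{(2,0)} = \sqrt r \cdot \sqrt{1-r}/\sqrt{1-r} = \sqrt r = \alpha_{(2,0)}$, and inductively $\alpha_{(2,j)} = \sqrt r$ for all $j$; likewise $\alpha_{(k,j)} = \sqrt r$ for all $k\geq 2$, $j\geq 0$. Meanwhile $\alpha_{(1,j)}$ strictly decreases to $0$ (from Step 4 with $x_0 < r$, since $f(x)<x$ for $0<x<r$). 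Hence for large $j$ we have $\alpha_{(1,j)} < \alpha_{(2,j)} = \sqrt r$ fine, but consider column $0$ and the \emph{columns}: the same construction run on columns must be consistent, and a spherical isometry with finitely atomic row measures would force, via Theorem \ref{Quasinormal}(iv), the moment relation $\gamma_{(k_1+1,k_2)} + \gamma_{(k_1,k_2+1)} = \gamma_{(k_1,k_2)}$. The cleanest finish: assume a spherical isometry $W_{(\alpha,\beta)}$ interpolates the data; its $0$-th row is a subnormal shift with weights $\sqrt p,\sqrt q,\sqrt r,\sqrt r,\dots$, whose Berger measure $\sigma$ is $(1-\frac{q}{r})\delta_0$-type only if $p$ is compatible — in fact $\shift(\sqrt p,\sqrt q,\sqrt r,\sqrt r,\dots)$ is subnormal iff its moments form a Hausdorff moment sequence, and by the Stampfli/recursiveness criterion this \emph{fails} when $p<q<r$ with two equal tail weights unless $p,q,r$ satisfy the recursive relation $r = \frac{q(r-q)+\cdots}{\cdots}$ forcing $p$ to a specific value; generically it already fails at the level of the single shift. \textbf{So the honest main obstacle} is pinning down \emph{which} of these two failure mechanisms Curto intends — single-variable non-subnormality of the $0$-th row versus breakdown of the upward propagation — and I expect the intended proof is the direct one: run the algorithm, use $p<q<r$ together with the two equal weights $\alpha_{(2,0)}=\alpha_{(3,0)}$ to show some $\beta_{(k,j)}^2 < 0$ at an explicit finite stage, and conclude by noting that a spherical isometry, being subnormal (Corollary \ref{subnormal}) hence having nonnegative row/column weights, cannot exist. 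I would carry out the explicit three-or-four-step weight computation, isolate the offending entry, and invoke Corollary \ref{subnormal} for the "as a consequence" clause.
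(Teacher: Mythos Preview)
The paper does not supply its own proof of this proposition; it is quoted from \cite[Proposition 12.14]{Cur}. So there is nothing to compare against on the paper's side, and the question is simply whether your argument stands on its own.

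It does not, yet. After correctly setting up the propagation and correctly discovering that the column-$1$ recursion $x_{j+1}=x_j(1-r)/(1-x_j)$ is harmless (since $q<r$ forces $x_j\downarrow 0$), you cycle through several candidate failure mechanisms without pinning one down. The ``non-strict-increase'' observation is a red herring here: the algorithm as written does not literally require strict increase to take a step, it requires each new $\beta^2$ to be positive. The ``$0$-th row not subnormal'' observation is true (and is exactly what the Remark following the proposition says), but it is a \emph{consequence} of the fact that no spherical isometry exists, not the mechanism by which the algorithm stalls. You end by promising an ``explicit three-or-four-step weight computation'' you never carry out.

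Here is the missing computation. Using the moment form of spherical isometry, $\gamma_{(k_1,k_2+1)}=\gamma_{(k_1,k_2)}-\gamma_{(k_1+1,k_2)}$, iterate to get
\[
\gamma_{(0,j)}=\sum_{i=0}^{j}(-1)^i\binom{j}{i}\gamma_{(i,0)}.
\]
With $\gamma_{(0,0)}=1$, $\gamma_{(1,0)}=p$, and $\gamma_{(k,0)}=pqr^{k-2}$ for $k\ge 2$, a short binomial summation gives
\[
\gamma_{(0,j)}=1-\frac{pq}{r^2}-j\,p\Bigl(1-\frac{q}{r}\Bigr)+\frac{pq}{r^2}(1-r)^j.
\]
Since $q<r$, the coefficient of $j$ is strictly negative while $(1-r)^j\to 0$, so $\gamma_{(0,j)}\to-\infty$; hence $\gamma_{(0,J)}\le 0$ for some finite $J$, meaning $\beta_{(0,J-1)}^2=\gamma_{(0,J)}/\gamma_{(0,J-1)}-0\le 0$ and the algorithm has failed. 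The ``as a consequence'' clause then follows exactly as you say, via Corollary \ref{subnormal}. The quantity to track was the \emph{zeroth column}, not the first.
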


\begin{remark}
(a) \ In Proposition \ref{Prop100} the reader may have noticed that the $0$-th row is not subnormal; for, it is well known that, up to a constant, the only subnormal unilateral weighted shifts with two equal weights are $U_+$ and $S_a$ (\cite[Theorem 6]{Sta}). \ Thus, save for these two special (trivial) cases, assuming subnormality of the $0$-th row will automatically guarantee that $\alpha_{(k,0)}$ is strictly increasing; therefore, in the sequel we will always assume that the $0$-th row is subnormal. \newline
(b) \ While the $0$-th row of the shift in Proposition \ref{Prop100} is not subnormal (not even quadratically hyponormal!), it is possible to prove that the algorithm also stalls with a $0$-th row of the form $\frac{3}{4}<\sqrt{\frac{2}{3}}<\sqrt{\frac{3}{4}}<\sqrt{\frac{4}{5}}<\cdots$, which is a rank-one perturbation of $B_+$. \ We shall prove in Theorem \ref{thm51} that the construction algorithm always works when the $0$-th row is subnormal.
\end{remark} \


\section{Polynomial Embeddings of Unilateral Weighted Shifts} \label{embed}

Let $I$ be a closed interval in $\mathbb{R}_+$. \ Consider a one-parametric representation of a planar curve $C$ described by a function $f:I \longrightarrow \mathbb{R}_{+}^{2}$ given by $f\left( r\right)
\equiv \left( s,t\right) :=\left( p\left( r\right) ,q\left( r\right) \right) $,
where $p,q$ are nonnegative real polynomials on $I$ and $f$
is injective. \ Assume that $W_{\omega }$ is a subnormal unilateral weighted
shift with Berger measure $\sigma $ and such that $\supp \; \sigma \subseteq I$, that is,
\begin{equation}
\gamma _{k}\left( W_{\omega }\right):=\omega _{0}^{2}\omega _{1}^{2}\cdots \omega _{k-1}^{2} =\int_{I}r^{k}d\sigma
(r).  \label{moment2}
\end{equation}%

\begin{definition}
\label{newdef} The polynomial embedding $\mathcal{PE}(\omega;f)$ of $W_{\omega}$ via the pair $f \equiv (p,q)$ is the $2$-variable weighted shift $W_{(\alpha(\omega),\beta(\omega))}$ with moments
\begin{equation} \label{moment1}
\gamma _{(k_{1},k_{2})}:=\int_{I}p(r)^{k_{1}}q(r)^{k_{2}}d\sigma (r).
\end{equation}
\end{definition}

If we knew that $\mathcal{PE}(\omega;f)$ is subnormal, with Berger measure $\mu$, the moment equations would be
$$
\int s^{k_1}t^{k_2} d\mu(s,t)=\gamma_{(k_1,k_2)}[\mu]=\int_{I}p(r)^{k_1} q(r)^{k_2}d\sigma (r)
=\int s^{k_1}t^{k_2} d\sigma(f^{-1}(s,t)),
$$
after using the change of variables $s=p(r),\; t=q(r)$. \ This gives the heuristics for the following result.

\begin{theorem}
\label{Th4A}(cf. Problems \ref{prob1} and \ref{Q1}) \ Let $W_{\left( \alpha(\omega) ,\beta(\omega) \right) }$ be a $(p,q)$-embedding of
$W_{\omega }$. \ If $W_{\omega }$ is subnormal with Berger measure $\sigma $%
, then $W_{\left( \alpha(\omega) ,\beta(\omega) \right) }$ is subnormal, with Berger measure $\mu:=\sigma \circ f^{-1}$.
\end{theorem}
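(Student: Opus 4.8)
The plan is to verify directly that the image measure $\mu:=\sigma\circ f^{-1}$ (the pushforward of $\sigma$ under $f=(p,q)$) is a Berger measure for $W_{(\alpha(\omega),\beta(\omega))}$, and then invoke the two‑variable analogue of the Berger--Gellar--Wallen theorem: a commuting $2$‑variable weighted shift is subnormal if and only if its moment array $\{\gamma_{(k_1,k_2)}\}$ is the moment array of a probability measure on $\mathbb{R}_+^2$, i.e. $\gamma_{(k_1,k_2)}=\int s^{k_1}t^{k_2}\,d\nu(s,t)$ for some Borel probability measure $\nu$ on $\mathbb{R}_+^2$; when $\supp\nu$ is compact such $\nu$ is unique and is \emph{the} Berger measure. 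So the whole proof reduces to producing $\nu$ and matching moments.

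First I would record the routine preliminaries. Since $W_\omega$ is subnormal, $\sigma$ is a probability measure with $\supp\sigma\subseteq I$ compact; as $p,q$ are polynomials (hence continuous and, by hypothesis, $\ge 0$ on $I$), the quantities $\gamma_{(k_1,k_2)}=\int_I p(r)^{k_1}q(r)^{k_2}\,d\sigma(r)$ are well defined and positive, with $\gamma_{(0,0)}=1$. The associated weights $\alpha_{(k_1,k_2)}^2=\gamma_{(k_1+1,k_2)}/\gamma_{(k_1,k_2)}$ and $\beta_{(k_1,k_2)}^2=\gamma_{(k_1,k_2+1)}/\gamma_{(k_1,k_2)}$ are bounded, because $\gamma_{(k_1+1,k_2)}/\gamma_{(k_1,k_2)}\le\max_{\supp\sigma}p<\infty$ and likewise for $\beta$; moreover the commutativity condition $\alpha_{(k_1,k_2)}\beta_{(k_1+1,k_2)}=\beta_{(k_1,k_2)}\alpha_{(k_1,k_2+1)}$ holds automatically, since both sides square to $\gamma_{(k_1+1,k_2+1)}/\gamma_{(k_1,k_2)}$. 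Hence $W_{(\alpha(\omega),\beta(\omega))}$ is a genuine bounded commuting $2$‑variable weighted shift whose moment array is exactly $(\ref{moment1})$.

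Next I would set $\mu:=\sigma\circ f^{-1}$. Because $f$ is continuous and $p,q\ge 0$ on $\supp\sigma$, $\mu$ is a Borel probability measure supported on the compact curve $C=f(\supp\sigma)\subseteq\mathbb{R}_+^2$. The change‑of‑variables (image‑measure) formula gives, for every bounded Borel $g$ on $\mathbb{R}_+^2$,
\[
\iint_{\mathbb{R}_+^2} g(s,t)\,d\mu(s,t)=\int_I g\big(p(r),q(r)\big)\,d\sigma(r).
\]
Taking $g(s,t)=s^{k_1}t^{k_2}$ yields $\iint s^{k_1}t^{k_2}\,d\mu(s,t)=\int_I p(r)^{k_1}q(r)^{k_2}\,d\sigma(r)=\gamma_{(k_1,k_2)}$ for all $(k_1,k_2)\in\mathbb{Z}_+^2$, which is precisely the moment identity required by the two‑variable subnormality criterion. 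Therefore $W_{(\alpha(\omega),\beta(\omega))}$ is subnormal with Berger measure $\mu=\sigma\circ f^{-1}$; uniqueness of $\mu$ is automatic since $\supp\mu$ is compact and the polynomials are dense in $C(\supp\mu)$.

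I do not expect a serious obstacle: the argument is essentially a bookkeeping transport of the one‑variable moment data of $\sigma$ to the two‑variable setting through $f$. The one place that deserves care is the appeal to the two‑variable Berger-type characterization rather than building a normal extension by hand; if one prefers a self‑contained route, one can instead realize the minimal normal extension as the pair of coordinate multiplications $(M_s,M_t)$ on $L^2(\mu)$ and identify $\ell^2(\mathbb{Z}_+^2)$ with the closed linear span of the monomials $\{s^{k_1}t^{k_2}\}$ via $e_{(k_1,k_2)}\mapsto s^{k_1}t^{k_2}/\sqrt{\gamma_{(k_1,k_2)}}$, after which the moment identity above shows this map is isometric and intertwines the shift with the restriction of $(M_s,M_t)$. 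A purely cosmetic point is the degenerate case in which $p$ or $q$ vanishes $\sigma$‑a.e. on a nontrivial part of $\supp\sigma$ (so some weights vanish), which is handled by the obvious direct‑sum reduction or excluded under the standing injectivity hypothesis on $f$.
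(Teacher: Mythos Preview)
Your proof is correct and follows essentially the same approach as the paper: define $\mu$ as the pushforward $\sigma\circ f^{-1}$, verify via the change-of-variables formula that its bivariate moments agree with the defining moments~(\ref{moment1}), and conclude subnormality from the two-variable Berger--Gellar--Wallen theorem (stated in the Appendix as the Jewell--Lubin result). The paper's proof is a two-sentence sketch of exactly this argument; you have simply supplied the details it leaves implicit (boundedness of the weights, commutativity, compact support, uniqueness), together with an optional explicit-extension alternative.
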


\begin{proof}
Define a probability measure $\mu$ on $\mathbb{R}_+^2$ by $\mu(E):=\sigma(f^{-1}(E))$, for all Borel sets $E$ in $\mathbb{R}_+^2$. \ Now compute the moments of $\mu$ and compare them to	 those in \ref{moment1}.
\end{proof}

\noindent
{\bf The Classical Embedding}. \ We now recall the $2$-variable weighted shift embedding introduced and studied in \cite{CLY2} and \cite{CuYo6}: \ Given a $1$%
-variable unilateral weighted shift $W_{\omega }$ we embed $\omega $ into $\ell
^{\infty}(\mathbb{Z}_{+}^{2})$ as follows:
\begin{equation}
\alpha _{(k_{1},k_{2})}\equiv \beta _{(k_{1},k_{2})}:=\omega
_{k_{1}+k_{2}}\;\;(k_{1},k_{2}\geq 0).  \label{eq11}
\end{equation}%
We denote the associated $2$-variable weighted by $W_{\theta(\omega)}$
(see Figure \ref{Fig 3}(i)); we will also use the notation $\mathcal{CE}(\omega)$. \ We also recall that a probability measure
 $\epsilon $ on $X\times X$ is said to be \textit{diagonal} if $\supp \; \epsilon
\subseteq \{(s,s):s\in X\}$.



\setlength{\unitlength}{1mm} \psset{unit=1mm}
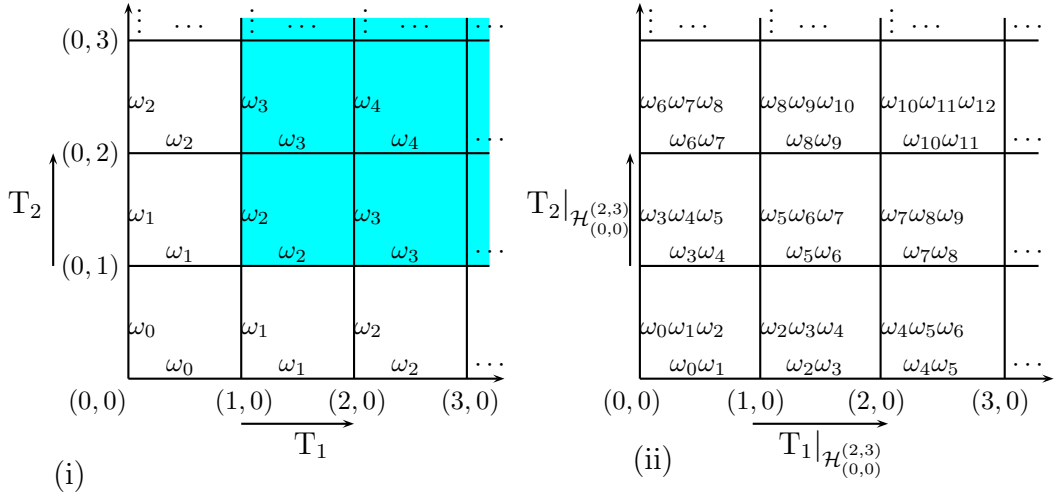
\begin{figure}[th]
\begin{center}
\begin{picture}(135,70)

\pspolygon*[linecolor=lightblue](30,35)(63,35)(63,68)(30,68)(30,35)

\psline{->}(15,20)(65,20)
\psline(15,35)(63,35)
\psline(15,50)(63,50)
\psline(15,65)(63,65)
\psline{->}(15,20)(15,70)
\psline(30,20)(30,68)
\psline(45,20)(45,68)
\psline(60,20)(60,68)

\put(7,16){\footnotesize{$(0,0)$}}
\put(26.5,16){\footnotesize{$(1,0)$}}
\put(41.5,16){\footnotesize{$(2,0)$}}
\put(56.5,16){\footnotesize{$(3,0)$}}

\put(20,21){\footnotesize{$\omega_{0}$}}
\put(35,21){\footnotesize{$\omega_{1}$}}
\put(50,21){\footnotesize{$\omega_{2}$}}
\put(61,21){\footnotesize{$\cdots$}}

\put(20,36){\footnotesize{$\omega_{1}$}}
\put(35,36){\footnotesize{$\omega_{2}$}}
\put(50,36){\footnotesize{$\omega_{3}$}}
\put(61,36){\footnotesize{$\cdots$}}

\put(20,51){\footnotesize{$\omega_{2}$}}
\put(35,51){\footnotesize{$\omega_{3}$}}
\put(50,51){\footnotesize{$\omega_{4}$}}
\put(61,51){\footnotesize{$\cdots$}}

\put(21,66){\footnotesize{$\cdots$}}
\put(36,66){\footnotesize{$\cdots$}}
\put(51,66){\footnotesize{$\cdots$}}
\put(61,66){\footnotesize{$\cdots$}}

\psline{->}(30,14)(45,14)
\put(37,10){$\rm{T}_1$}
\psline{->}(5,35)(5,50)
\put(-1,42){$\rm{T}_2$}

\put(6,34){\footnotesize{$(0,1)$}}
\put(6,49){\footnotesize{$(0,2)$}}
\put(6,64){\footnotesize{$(0,3)$}}

\put(15,26){\footnotesize{$\omega_{0}$}}
\put(15,41){\footnotesize{$\omega_{1}$}}
\put(15,56){\footnotesize{$\omega_{2}$}}
\put(16,66){\footnotesize{$\vdots$}}

\put(30,26){\footnotesize{$\omega_{1}$}}
\put(30,41){\footnotesize{$\omega_{2}$}}
\put(30,56){\footnotesize{$\omega_{3}$}}
\put(31,66){\footnotesize{$\vdots$}}

\put(45,26){\footnotesize{$\omega_{2}$}}
\put(45,41){\footnotesize{$\omega_{3}$}}
\put(45,56){\footnotesize{$\omega_{4}$}}
\put(46,66){\footnotesize{$\vdots$}}

\put(5,6){(i)}


\put(82,8){(ii)}

\psline{->}(98,14)(116,14)
\put(101,10){$\left.\rm{T}_{1}\right|_{\mathcal{H}_{(0,0)}^{(2,3)}}$}
\psline{->}(81.7,35)(81.7,50)
\put(67.5,42){$\left.\rm{T}_{2}\right|_{\mathcal{H}_{(0,0)}^{(2,3)}}$}

\psline{->}(83,20)(138,20)
\psline(83,35)(136,35)
\psline(83,50)(136,50)
\psline(83,65)(136,65)

\psline{->}(83,20)(83,70)
\psline(99,20)(99,68)
\psline(115,20)(115,68)
\psline(131.5,20)(131.5,68)

\put(78,16){\footnotesize{$(0,0)$}}
\put(94.5,16){\footnotesize{$(1,0)$}}
\put(110.5,16){\footnotesize{$(2,0)$}}
\put(127,16){\footnotesize{$(3,0)$}}

\put(87,21){\footnotesize{$\omega_{0}\omega_{1}$}}
\put(102.5,21){\footnotesize{$\omega_{2}\omega_{3}$}}
\put(118,21){\footnotesize{$\omega_{4}\omega_{5}$}}
\put(132.5,21){\footnotesize{$\cdots$}}

\put(87,36){\footnotesize{$\omega_{3}\omega_{4}$}}
\put(102.5,36){\footnotesize{$\omega_{5}\omega_{6}$}}
\put(118,36){\footnotesize{$\omega_{7}\omega_{8}$}}
\put(132.5,36){\footnotesize{$\cdots$}}

\put(87,51){\footnotesize{$\omega_{6}\omega_{7}$}}
\put(102.5,51){\footnotesize{$\omega_{8}\omega_{9}$}}
\put(118,51){\footnotesize{$\omega_{10}\omega_{11}$}}
\put(132.5,51){\footnotesize{$\cdots$}}

\put(88,66){\footnotesize{$\cdots$}}
\put(104,66){\footnotesize{$\cdots$}}
\put(119,66){\footnotesize{$\cdots$}}
\put(132.5,66){\footnotesize{$\cdots$}}

\put(83,26){\footnotesize{$\omega_{0}\omega_{1}\omega_{2}$}}
\put(83,41){\footnotesize{$\omega_{3}\omega_{4}\omega_{5}$}}
\put(83,56){\footnotesize{$\omega_{6}\omega_{7}\omega_{8}$}}
\put(84,66){\footnotesize{$\vdots$}}

\put(99,26){\footnotesize{$\omega_{2}\omega_{3}\omega_{4}$}}
\put(99,41){\footnotesize{$\omega_{5}\omega_{6}\omega_{7}$}}
\put(99,56){\footnotesize{$\omega_{8}\omega_{9}\omega_{10}$}}
\put(100,66){\footnotesize{$\vdots$}}

\put(115,26){\footnotesize{$\omega_{4}\omega_{5}\omega_{6}$}}
\put(115,41){\footnotesize{$\omega_{7}\omega_{8}\omega_{9}$}}
\put(115,56){\footnotesize{$\omega_{10}\omega_{11}\omega_{12}$}}
\put(116,66){\footnotesize{$\vdots$}}
\end{picture}
\end{center}
\caption{(i) Weight diagram of the $2$-variable weighted shift $W_{\theta(\omega)}$, highlighting its restriction to $\mathcal{M}\cap \mathcal{N}$ (see page \pageref{MN} for the definitions of $\mathcal{M}$ and $\mathcal{N}$); (ii) weight diagram used in Example \ref {ex102}.}
\label{Fig 3}
\end{figure}


\begin{lemma}
\label{Lem 2} \cite{CLY2} (a) \ Let $W_{\omega }$ be a unilateral weighted
shift, let $W_{\theta(\omega)}$ be the classical embedding of $\omega $, and
let $k\geq 1$. \ Then
\begin{equation}
\begin{tabular}{l}
$W_{\omega }$ is $k$-hyponormal if and only if $W_{\theta(\omega)}$ is $k$-hyponormal.
\end{tabular}
\label{k-hy}
\end{equation}%
(b) \ $W_{\omega }$ is subnormal if and only if $W_{\theta(\omega)}$ is subnormal; in this case, the Berger measure $%
\epsilon $ of $W_{\theta(\omega)}$ is diagonal. \ Moreover, $\epsilon ^{X}$ is the Berger measure of $W_{\omega }$.
\end{lemma}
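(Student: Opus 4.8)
The plan is to dispatch (b) first, since it is essentially the special case $p(r)\equiv q(r):=r$ of Theorem \ref{Th4A}, and then to prove the $k$-hyponormality statement (a) by an explicit comparison of moment matrices. For (b): in the classical embedding $\mathcal{CE}(\omega)$ the map $f(r)=(r,r)$ is injective and nonnegative on $I:=[0,\|W_{\omega}\|^{2}]$, so if $W_{\omega}$ is subnormal with Berger measure $\sigma$, Theorem \ref{Th4A} gives at once that $W_{\theta(\omega)}$ is subnormal with Berger measure $\epsilon=\sigma\circ f^{-1}$. Since $f(I)\subseteq\{(s,s):s\in I\}$, the measure $\epsilon$ is diagonal, and for a Borel set $E$ one has $\epsilon^{X}(E)=\epsilon(E\times X)=\sigma(f^{-1}(E\times X))=\sigma(E)$, i.e.\ $\epsilon^{X}=\sigma$. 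For the converse, observe that if $W_{\theta(\omega)}\equiv(T_{1},T_{2})$ is (jointly) subnormal then $T_{1}$ is subnormal; the subspace $\mathcal{H}_{0}:=\bigvee\{e_{(k,0)}:k\geq0\}$ is invariant under $T_{1}$, and $T_{1}|_{\mathcal{H}_{0}}$ is unitarily equivalent (via $e_{(k,0)}\mapsto e_{k}$) to $W_{\omega}$; a restriction of a subnormal operator to an invariant subspace is subnormal, so $W_{\omega}$ is subnormal.

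For (a), recall the moment-matrix criteria: $W_{\omega}$ is $k$-hyponormal iff the Hankel matrices $M_{\omega}(n;k):=\bigl(\gamma_{n+i+j}(W_{\omega})\bigr)_{0\le i,j\le k}$ are positive semidefinite for every $n\ge0$, while a $2$-variable weighted shift $W_{(\alpha,\beta)}$ is $k$-hyponormal iff the moment matrices $M_{\mathbf{n}}(k):=\bigl(\gamma_{\mathbf{n}+\mathbf{s}+\mathbf{t}}(W_{(\alpha,\beta)})\bigr)$, indexed by monomials $\mathbf{s},\mathbf{t}\in\mathbb{Z}_{+}^{2}$ of degree $\le k$, are positive semidefinite for every base point $\mathbf{n}\in\mathbb{Z}_{+}^{2}$. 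The first step is the direct computation, immediate from \eqref{eq11}, that $\gamma_{(k_{1},k_{2})}(W_{\theta(\omega)})=\gamma_{k_{1}+k_{2}}(W_{\omega})$; consequently the $(\mathbf{s},\mathbf{t})$-entry of $M_{\mathbf{n}}(k)$ is $\gamma_{|\mathbf{n}|+|\mathbf{s}|+|\mathbf{t}|}(W_{\omega})$ (writing $|\mathbf{k}|:=k_{1}+k_{2}$), and hence depends only on the \emph{total degrees} $|\mathbf{s}|,|\mathbf{t}|$, not on the individual monomials.

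The second step packages this as a factorization. Let $P$ be the linear map that, on the block of monomials of degree $d$, sums the coordinates and records the result in the $d$-th slot of $\mathbb{C}^{k+1}$; then $P$ is surjective and, grouping the quadratic form $\langle M_{\mathbf{n}}(k)v,v\rangle$ by degree, one checks entrywise that $M_{\mathbf{n}}(k)=P^{*}M_{\omega}(|\mathbf{n}|;k)\,P$. From this the equivalence is formal: if every $M_{\omega}(n;k)\ge0$ then $M_{\mathbf{n}}(k)=P^{*}M_{\omega}(|\mathbf{n}|;k)P\ge0$ for all $\mathbf{n}$, so $W_{\theta(\omega)}$ is $k$-hyponormal; conversely, specializing to $\mathbf{n}=(n,0)$ and using surjectivity of $P$ to write any test vector $w\in\mathbb{C}^{k+1}$ as $w=Pv$, we get $\langle M_{\omega}(n;k)w,w\rangle=\langle M_{(n,0)}(k)v,v\rangle\ge0$, so $M_{\omega}(n;k)\ge0$ for all $n$ and $W_{\omega}$ is $k$-hyponormal.

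I expect the main obstacle to be purely expository: pinning down the precise form of the $k$-hyponormality criterion for $2$-variable weighted shifts (the correct monomial indexing of $M_{\mathbf{n}}(k)$ and the fact that testing all base points $\mathbf{n}$ is necessary and sufficient) and then verifying the factorization $M_{\mathbf{n}}(k)=P^{*}M_{\omega}(|\mathbf{n}|;k)P$ block by block; once these are in place, everything else is bookkeeping. An alternative, more operator-theoretic route for (a) would exploit the symmetric-Fock-type decomposition $\ell^{2}(\mathbb{Z}_{+}^{2})=\bigoplus_{n\ge0}\mathcal{H}_{n}$, $\mathcal{H}_{n}=\bigvee\{e_{\mathbf{k}}:|\mathbf{k}|=n\}$, on which both $T_{1}$ and $T_{2}$ act with scalar weight $\omega_{n}$, but the moment-matrix argument above seems the most direct.
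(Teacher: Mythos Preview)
Your argument is correct. Note, however, that the paper does not supply its own proof of Lemma \ref{Lem 2}: the result is quoted from \cite{CLY2} and used as a black box, so there is no in-paper proof to compare against.

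That said, your approach is exactly the natural one and matches in spirit what one finds in the original reference. For (b), invoking Theorem \ref{Th4A} with $p(r)=q(r)=r$ is legitimate (and, via Example \ref{ex101}, the classical embedding defined by \eqref{eq11} does coincide with the $(r,r)$-embedding when $W_\omega$ is subnormal); the converse via restriction to the invariant subspace $\bigvee\{e_{(k,0)}\}$ is the standard move. For (a), the factorization $M_{\mathbf{n}}(k)=P^{*}M_{\omega}(|\mathbf{n}|;k)\,P$ is precisely the mechanism behind the equivalence, and your verification is clean: the key observation that $\gamma_{(k_1,k_2)}(W_{\theta(\omega)})=\gamma_{k_1+k_2}(W_\omega)$ forces the $2$-variable moment matrix to have constant entries along blocks of fixed total degree, which is exactly what $P^{*}(\cdot)P$ encodes. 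The surjectivity of $P$ (e.g., take vectors supported on the monomials $(d,0)$, $0\le d\le k$) is what makes the converse go through. Nothing is missing.
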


\begin{example}
\label{ex101} Let $W_{\omega }$, $\sigma $, $W_{\left( \alpha(\omega) ,\beta(\omega) \right)}$ and $\gamma _{(k_{1},k_{2})}$ be as in (\ref{moment1}). \ If $p(r)=r$ and
$q(r)=r$, then the $(p,q)$-embedding $W_{\left( \alpha(\omega) ,\beta(\omega) \right) }$ of $W_{\omega }$ is the embedding considered in \cite{CLY2}; that is, $W_{\left( \alpha(\omega) ,\beta(\omega) \right)}=W_{\theta(\omega)}$, or equivalently,
$$
\beta_{(k_{1},k_{2})}=\alpha _{(k_{1},k_{2})}=\omega _{k_{1}+k_{2}} \quad (\textrm{for all } k_{1},k_{2}=0,1,2,\cdots)
$$
(see Figure \ref{Fig 3}(i)). \ To prove this, observe that
$$
\gamma _{(k_{1},k_{2})}\left( W_{\left( \alpha(\omega) ,\beta(\omega) \right)}\right)=\int r^{k_{1}}r^{k_{2}}\;d\sigma
(r)=\int r^{k_{1}+k_{2}}\;d\sigma (r)=\gamma _{k_{1}+k_{2}}\left(
W_{\omega }\right),
$$
and therefore $\beta _{(k_{1},k_{2})}=\alpha
_{(k_{1},k_{2})}=\omega _{k_{1}+k_{2}} \; (\textrm{all } k_{1},k_{2}=0,1,2,\cdots)$, as desired. \qed
\end{example}

\bigskip
\noindent
{\bf The Spherically Quasinormal Embedding}. \ 

We now state and prove that, starting with a non-flat subnormal shift $W_{\omega}$, the construction described in Subsection \ref{constr} always leads to a spherically quasinormal embedding.

\begin{theorem} \label{thm51}
(cf. Problem \ref{prob1}(i), and Problems \ref{quest} and \ref{Q1}) \ Let $W_{\omega}$ be a subnormal unilateral weighted shift, with Berger measure $\sigma$, and let $\mu:=\sigma \circ f^{-1}$, where $f(r):=(r,c-r)$, where $c:=\left\|W_{\omega}\right\|^2$. \ Then the $2$-variable weighted shift $\mathcal{PE}(\omega;f)$ is a spherically quasinormal embedding of $W_{\omega}$. 
\end{theorem}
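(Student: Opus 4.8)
The plan is to reduce the theorem to two short moment computations together with Theorems \ref{Th4} and \ref{Quasinormal}. Set $p(r):=r$ and $q(r):=c-r$, so that $f(r)=(p(r),q(r))=(r,c-r)$ is injective. First I would check that the hypotheses of Theorem \ref{Th4} are met: since $W_{\omega}$ is bounded and subnormal, it is standard that $\supp\;\sigma\subseteq[0,\left\|W_{\omega}\right\|^{2}]=[0,c]$ --- otherwise the moments $\gamma_{k}(W_{\omega})=\int r^{k}\,d\sigma$ would eventually exceed $\left\|W_{\omega}\right\|^{2k}$ --- so that $\left.p\right|_{\supp\;\sigma}\ge0$ and $\left.q\right|_{\supp\;\sigma}\ge0$; moreover $c=\left\|W_{\omega}\right\|^{2}>0$ because $\omega$ is a sequence of positive numbers. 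Hence $\mathcal{PE}(\omega;f)=W_{(\alpha(\omega),\beta(\omega))}$ is (jointly) subnormal, with Berger measure $\mu=\sigma\circ f^{-1}$.

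Next I would confirm that $\mathcal{PE}(\omega;f)$ is genuinely an embedding of $W_{\omega}$, i.e., that the $0$-th row shift $W_{0}:=\shift(\alpha_{(0,0)},\alpha_{(1,0)},\cdots)$ coincides with $W_{\omega}$. This is immediate from (\ref{moment1}): for every $k\ge0$,
\begin{equation*}
\gamma_{(k,0)}=\int_{I}p(r)^{k}q(r)^{0}\,d\sigma(r)=\int_{I}r^{k}\,d\sigma(r)=\gamma_{k}(W_{\omega}),
\end{equation*}
and a unilateral weighted shift with positive weights is uniquely determined by its moment sequence.

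The core of the argument is the moment identity underlying spherical quasinormality. For every $(k_{1},k_{2})\in\mathbb{Z}_{+}^{2}$, using (\ref{moment1}) and the fact that $p(r)+q(r)\equiv c$, I would compute
\begin{equation*}
\gamma_{(k_{1}+1,k_{2})}+\gamma_{(k_{1},k_{2}+1)}=\int_{I}r^{k_{1}}(c-r)^{k_{2}}\bigl(r+(c-r)\bigr)\,d\sigma(r)=c\int_{I}r^{k_{1}}(c-r)^{k_{2}}\,d\sigma(r)=c\,\gamma_{(k_{1},k_{2})}.
\end{equation*}
Thus condition (iv) of Theorem \ref{Quasinormal} holds with the positive constant $c=\left\|W_{\omega}\right\|^{2}$, so $W_{(\alpha(\omega),\beta(\omega))}$ is spherically quasinormal, with constant of spherical quasinormality $c$. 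Combined with the preceding paragraph, this shows that $\mathcal{PE}(\omega;f)$ is a spherically quasinormal embedding of $W_{\omega}$.

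I do not expect a serious obstacle. The two points requiring care are: (a) the inclusion $\supp\;\sigma\subseteq[0,c]$, which is exactly what makes $\left.q\right|_{\supp\;\sigma}\ge0$ so that Theorem \ref{Th4} is applicable; and (b) checking that $\mathcal{PE}(\omega;f)$ really is a $2$-variable weighted shift, i.e., that $\gamma_{(k_{1},k_{2})}=\int_{I}r^{k_{1}}(c-r)^{k_{2}}\,d\sigma(r)>0$ for all $(k_{1},k_{2})$. This positivity fails only when $\sigma$ is concentrated on the two-point set $\{0,c\}$, which by \cite{Sta} forces $W_{\omega}$ to be a scalar multiple of $U_{+}$ or of $S_{a}$; under the standing assumption of this section that the $0$-th row is a non-flat subnormal shift, these cases are excluded, $\sigma$ charges the open interval $(0,c)$, and every $\gamma_{(k_{1},k_{2})}$ is strictly positive.
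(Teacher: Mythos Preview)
Your proof is correct and follows essentially the same approach as the paper: invoke Theorem \ref{Th4} (equivalently, Theorem \ref{Th4A}) to get subnormality with Berger measure $\mu=\sigma\circ f^{-1}$, then verify the moment recursion $\gamma_{(k_1+1,k_2)}+\gamma_{(k_1,k_2+1)}=c\,\gamma_{(k_1,k_2)}$ and apply Theorem \ref{Quasinormal}(iv). The only cosmetic difference is that the paper computes the recursion by integrating against $\mu$ (using $s+t=c$ on $\supp\;\mu$) while you integrate against $\sigma$ via \eqref{moment1} (using $p(r)+q(r)=c$); your additional checks on $\supp\;\sigma\subseteq[0,c]$, on recovering $W_{\omega}$ as the $0$-th row, and on the strict positivity of the moments are more explicit than what the paper records.
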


\begin{proof}
We know that $\mathcal{PE}(\omega;f)$ is subnormal, and that $\supp \; \mu \subseteq \{(r,c-r):r \in [0,c]\}$. \ Then 
\begin{eqnarray*}
\gamma_{(k_1+1,k_2)}[\mu]+\gamma_{(k_1,k_2+1)}[\mu]&=&\int\int s^{k_1+1}t^{k_2} d\mu(s,t)+\int\int s^{k_1}t^{k_2+1} d\mu(s,t) \\
&=&\int \int (s+t)s^{k_1}t^{k_2} d\mu(s,t)=\int\int cs^{k_1}t^{k_2} d\mu(s,t) \\
&=&c\gamma_{(k_1,k_2)}[\mu],
\end{eqnarray*}
for all $k_1,k_2 \in \mathbb{Z}_+$. \ By Theorem \ref{Quasinormal}(iv), we see that $\mathcal{PE}(\omega;f)$ is spherically quasinormal, as desired.
\end{proof}

\newpage
\noindent
$(p,q)${\bf -embeddings of Recursively Generated Weighted Shifts}. \ 
\begin{theorem}
\label{Prop1A}(cf. Problem \ref{prob1}(ii)) \ Let $W_{\left( \alpha(\omega) ,\beta(\omega) \right) }$ be a 
$(p,q)$-embedding $2$-variable weighted shift, where $W_{\omega }$ is
subnormal with finitely atomic Berger measure $\sigma =\sum_{i=0}^{\ell
-1}\rho _{i}\delta _{r_{i}}$, $\sum_{i=0}^{\ell -1}\rho _{i}=1$, $\rho _{i}>0
$, and $0\leq r_0<r_1< \cdots <r_{\ell-1}\leq 1$. \ Then the Berger measure $\mu $ of $%
W_{\left( \alpha(\omega) ,\beta(\omega) \right) }$ is%
\begin{equation}
\mu =\sum_{i=0}^{\ell -1}\rho _{i}\delta _{\left(p(r_{i}),q(r_{i})\right)
}.  \label{finite measureA}
\end{equation}
\end{theorem}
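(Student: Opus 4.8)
The plan is to obtain Theorem~\ref{Prop1A} as an immediate corollary of Theorem~\ref{Th4A}, which already identifies the Berger measure of any $(p,q)$-embedding of a subnormal $W_{\omega}$ as the pushforward $\mu := \sigma \circ f^{-1}$, with $f(r) := (p(r),q(r))$. All that then remains is to evaluate this pushforward in the special case that $\sigma$ is finitely atomic. I would first note that the standing hypotheses of Theorem~\ref{Th4A} are in force here: $\supp \; \sigma = \{r_0,\dots,r_{\ell-1}\} \subseteq [0,1] \subseteq I$, and $p,q$ are nonnegative polynomials on $I$, so in particular $p(r_i) \ge 0$ and $q(r_i) \ge 0$ for every $i$.

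Next I would compute $\mu(E)$ for an arbitrary Borel set $E \subseteq \mathbb{R}_+^2$. By the definition of the pushforward and the fact that $\sigma$ is carried by the finite set $\{r_0,\dots,r_{\ell-1}\}$,
\begin{equation*}
\mu(E) = \sigma\big(f^{-1}(E)\big) = \sum_{\{i \, : \, r_i \in f^{-1}(E)\}} \rho_i = \sum_{\{i \, : \, f(r_i) \in E\}} \rho_i = \sum_{\{i \, : \, (p(r_i),q(r_i)) \in E\}} \rho_i .
\end{equation*}
The right-hand side is precisely $\big(\sum_{i=0}^{\ell-1} \rho_i \delta_{(p(r_i),q(r_i))}\big)(E)$. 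Since $E$ was arbitrary, the two measures coincide, which is exactly (\ref{finite measureA}). One should also check that $\mu$ is again a probability measure, which is automatic since $f^{-1}(\mathbb{R}_+^2) \supseteq I \supseteq \supp \; \sigma$, whence $\mu(\mathbb{R}_+^2) = \sigma(I) = \sum_{i=0}^{\ell-1}\rho_i = 1$.

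Finally, I would record the consequence for the atomic structure: because $f$ is injective on $I$ (part of the definition of a polynomial embedding), the points $(p(r_i),q(r_i))$, $i = 0,\dots,\ell-1$, are pairwise distinct, so $\mu$ in (\ref{finite measureA}) is a genuinely $\ell$-atomic probability measure; in particular $W_{(\alpha(\omega),\beta(\omega))}$ is recursively generated, as anticipated in the discussion preceding the statement. (Even without injectivity the displayed formula would remain correct, with coincident atoms amalgamated.)

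As for difficulty, there is essentially no obstacle beyond bookkeeping: the substantive content is carried entirely by Theorem~\ref{Th4A}, and the remaining argument is the elementary computation of a pushforward of a finite sum of point masses. The only points requiring a moment's care are the verification that the hypotheses of Theorem~\ref{Th4A} (nonnegativity of $p,q$ on $\supp \; \sigma$) hold and that normalization is preserved, both of which follow from the standing conventions on polynomial embeddings.
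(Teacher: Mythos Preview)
Your proof is correct, but it takes a different route from the paper's. You invoke Theorem~\ref{Th4A} as a black box to identify $\mu=\sigma\circ f^{-1}$ and then simply evaluate the pushforward on a finite sum of point masses; this is the most economical argument available once Theorem~\ref{Th4A} is in hand. The paper instead gives a self-contained moment-matching proof: it first reduces (without loss of generality) to the monomial case $p(r)=r^{m}$, $q(r)=r^{n}$, then shows directly that $t^{m}=s^{n}$ a.e.~$\mu$ via the vanishing of $\int s^{k_{1}}t^{k_{2}}(t^{m}-s^{n})\,d\mu$, and finally verifies the formula by checking moments first for a single atom (Claim~1) and then for a finite sum (Claim~2). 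Your approach is shorter and makes clear that Theorem~\ref{Prop1A} is really just the specialization of Theorem~\ref{Th4A} to finitely atomic $\sigma$; the paper's approach, by contrast, keeps the moment/Riesz-functional viewpoint visible and does not formally depend on having proved the general pushforward formula first.
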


\begin{proof}
A moment's thought reveals that, without loss of generality, we can assume that $p(r) \equiv r^m$ and $q(r) \equiv r^n$, where $m$ and $n$ are positive integers. \ We first observe that $\supp \; \mu \subseteq \left\{ \left(
p(r),q(r)\right) :0\leq r\leq 1\right\} $. \ Since
\begin{eqnarray*}
\int \int s^{k_{1}}t^{k_{2}}\left( t^{m}-s^{n}\right) d\mu \left(
s,t\right)& =&\int \int \left(
s^{k_{1}}t^{k_{2}+m}-s^{k_{1}+n}t^{k_{2}}\right) d\mu \left( s,t\right) \\
&=&\int_{\left[ 0,1\right] }\left( r^{mk_{1}}r^{n\left( k_{2}+m\right)
}-r^{m\left( k_{1}+n\right) }r^{nk_{2}}\right) d\sigma \left( r\right) =0,
\end{eqnarray*}
for all $k_1,k_2 \ge 0$, we easily see that $t^{m}=s^{n}$ a.e. $\mu$. \ Thus, $\supp \; \mu
\subseteq \left\{ \left( r^{m},r^{n}\right) :0\leq r\leq 1\right\} $.

The rest of the proof will follow once we establish the next two Claims.

\textbf{Claim 1:} If $\sigma =\delta _{s_{0}}$, then $\mu =\delta _{\left(
s_{0}^m,s_{0}^n\right) }$.

\textbf{Proof of Claim 1:} \ Observe that for all $(k_{1},k_{2})\in \mathbb{Z%
}_{+}^{2}$,
$$
\gamma _{mk_{1}+nk_{2}}\left( W_{\omega }\right) =\int_{\left[ 0,1\right]
}r^{mk_{1}+nk_{2}}d\sigma \left( r\right)
=\int_{\left[ 0,1\right]
} p(r)^{k_{1}}q(r)^{k_{2}}d\sigma (r)=\gamma _{(k_{1},k_{2})}\left(
W_{\left( \alpha(\omega) ,\beta(\omega) \right) } \right).
$$
It follows that
\begin{eqnarray*}
\int \int s^{k_{1}}t^{k_{2}}d\delta _{\left(
s_{0},s_{0}\right)}(s,t)
&=&s_{0}^{mk_{1}}s_{0}^{nk_{2}}=s_{0}^{mk_{1}+nk_{2}} \\
&=&\int r^{mk_1+nk_2}d\delta_{s_0}(r)=\gamma_{mk_1+nk_2}[\sigma] \\
&=&\gamma_{(k_1,k_2}(W_{(\alpha(\omega),\beta(\omega))}).
\end{eqnarray*}

Thus, we must have $\mu =\delta _{\left( s_{0}^m,s_{0}^n\right) }$.

\textbf{Claim 2:} If $\sigma =\sum_{i=0}^{\ell -1}\rho _{i}\delta _{s_{i}}$,
then $\mu =\sum_{i=0}^{\ell -1}\rho _{i}\delta _{\left(
s_{i}^{m},s_{i}^{n}\right) }$.

\textbf{Proof of Claim 2:} \  This is straightforward.

\end{proof}

\begin{corollary} \label{newcor}
Let $W_{(\alpha(\omega),\beta(\omega))}$ be a polynomial embedding of $W_{\omega}$, and let $\mu$, $\sigma$ and $f:=(p,q)$ be as in Theorem \ref{Th4A}; that is,
$$
d \mu(p(r),q(r))=d \sigma(r).
$$
Then
$$
d \left( \mu(p(r),q(r)) \right)^X=d \sigma(p(r))
$$
and
$$
d \left( \mu(p(r),q(r)) \right)^Y=d \sigma(q(r))
$$
where for a measure $\nu$, $(\nu)^X$ and $(\nu)^Y$ denote the marginal measures of $\nu$. \ Alternatively, recall that 
$\mu:=\sigma \circ f^{-1}$. \ Then
\begin{equation} \label{marginalX}
\mu^X=\sigma \circ p^{-1}
\end{equation}
and
$$
\mu^Y=\sigma \circ q^{-1}.
$$
As a consequence, 
$$
\mu^X \circ p=\sigma
$$
and
$$
\mu^Y \circ q=\sigma,
$$
so that
$$
\mu^X \circ p=\mu^Y \circ q.
$$
In the case when $\sigma$ is the Lebesgue measure on $[0,1]$, we see that 
$$
d\mu^X(s)=d(p^{-1}(s))=(p^{-1})'(s)ds.
$$
and
$$
d\mu^Y(t)=d(q^{-1}(t))=(q^{-1})'(t)dt.
$$
(For additional details on marginal measures, the reader is referred to Subsection \ref{disintegration} in the Appendix.) 
\end{corollary}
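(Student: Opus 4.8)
The plan is to unwind the definition of the marginal measures and then feed in the identity $\mu = \sigma \circ f^{-1}$ furnished by Theorem \ref{Th4A}. Recall (Subsection \ref{disintegration}) that the marginal $\mu^X$ is the Borel measure on $\mathbb{R}_+$ determined by $\mu^X(A) = \mu(A \times \mathbb{R}_+)$, and likewise $\mu^Y(B) = \mu(\mathbb{R}_+ \times B)$. Writing $f = (p,q)$ and using that $q$ is nonnegative on $I$, so that $q^{-1}(\mathbb{R}_+) = I$, one has $f^{-1}(A \times \mathbb{R}_+) = p^{-1}(A) \cap q^{-1}(\mathbb{R}_+) = p^{-1}(A)$. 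Hence
$$
\mu^X(A) = \sigma\!\left(f^{-1}(A \times \mathbb{R}_+)\right) = \sigma\!\left(p^{-1}(A)\right),
$$
which is exactly $\mu^X = \sigma \circ p^{-1}$, i.e.\ (\ref{marginalX}); the identity $\mu^Y = \sigma \circ q^{-1}$ follows symmetrically, using $p^{-1}(\mathbb{R}_+) = I$. Rewriting these in the ``$d\mu(p(r),q(r)) = d\sigma(r)$'' notation gives the first two displays of the statement.

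For the displayed consequences I would argue at the level of push-forwards. On $\supp \; \sigma$ the map $p$ is injective --- in each embedding we use ($p(r) = r$, $r^2$, $1-r$, etc.) it is in fact strictly monotone there --- so $p^{-1}$ is a genuine Borel map on $p(\supp \; \sigma)$, and $\mu^X \circ p$ (read as the push-forward of $\mu^X$ by $p^{-1}$, equivalently $B \mapsto \mu^X(p(B))$) satisfies $\mu^X \circ p = (p^{-1})_*(p_*\sigma) = (p^{-1} \circ p)_*\sigma = \sigma$. The same computation gives $\mu^Y \circ q = \sigma$, and comparing the two yields $\mu^X \circ p = \mu^Y \circ q$.

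Finally, for the last two formulas I would specialize $\sigma$ to Lebesgue measure on $[0,1]$ and, as in every example under consideration, assume $p$ strictly increasing on $[0,1]$ with strictly increasing inverse $p^{-1}$ on $p([0,1])$. Then for $s \in p([0,1])$,
$$
\mu^X([0,s]) = \left|\{\, r \in [0,1] : p(r) \le s \,\}\right| = \left|[0, p^{-1}(s)]\right| = p^{-1}(s),
$$
so the distribution function of $\mu^X$ is $p^{-1}$ and, as Lebesgue--Stieltjes measures, $d\mu^X = d(p^{-1})$. Since $p^{-1}$ is the inverse of a polynomial that is strictly increasing on $(0,1]$, it is absolutely continuous on compact subintervals of $p((0,1])$, whence $d\mu^X(s) = (p^{-1})'(s)\,ds$; the computation for $\mu^Y$ is identical (with $|(q^{-1})'|$ in place of $(q^{-1})'$ when $q$ is decreasing, as for $q(r) = 1-r$).

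The only step requiring care --- and hence the main obstacle --- is the bookkeeping around the inverse maps $p^{-1}$ and $q^{-1}$: one must either restrict to $\supp \; \sigma$, where injectivity of $f$ together with nonnegativity of $p$ and $q$ makes the inverses unambiguous, or interpret the ``$\mu^X \circ p = \sigma$'' identities purely formally as the composition of a push-forward with a pull-back. Once monotonicity of $p$ (hence a.e.\ differentiability and absolute continuity on compacta of $p^{-1}$) is in place, the change-of-variables step in the Lebesgue case is routine.
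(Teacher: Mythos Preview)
Your argument is correct and follows essentially the paper's primary approach: both compute $\mu^X$ by unwinding the definition of the marginal and feeding in $\mu=\sigma\circ f^{-1}$. Your set-theoretic identity $f^{-1}(A\times\mathbb{R}_+)=p^{-1}(A)$ is in fact cleaner than the paper's phrasing, which appeals somewhat informally to injectivity of $f$ to justify the same step; you correctly isolate the injectivity of $p$ (rather than of $f$) as the real issue behind the inversion $\mu^X\circ p=\sigma$. One item worth noting: the paper also supplies a second, independent proof via convex analysis---verify the identity for point masses, extend linearly to finitely atomic measures, then pass to general probability measures by $w^*$-density and $w^*$-continuity of the marginal map---which you do not give; that route sidesteps all injectivity bookkeeping at the cost of an approximation argument.
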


\begin{proof}
We focus on the marginal measure $\mu^X$. \ From Subsection \ref{disintegration}, we know that 
$$
\mu^X=\mu \circ \pi_X^{-1}.
$$
Therefore,
$$
d \mu^X(p(r))=d\mu(p(r),q(r))=d\sigma(r)
$$
(where we have used the injectivity of $f$ to claim that there is a unique point $(p(r),q(r))$ projecting onto $p(r)$). \ Then 
$$
\mu^X \circ p = \sigma,
$$
or
$$
\mu^X=\sigma \circ p^{-1},
$$
as desired. 

For the reader's convenience, we also provide an alternative proof, that uses techniques from convex analysis. \ First, observe that the result is obvious if $\sigma$ is a point mass $\delta_{r}$ in (\ref{marginalX}). \ As a consequence, the result is also true for finitely atomic measures. \ Moreover, we know that these point masses are the extreme points of the unit ball of the space of probability Borel measures on either a closed interval in $\mathbb{R}$ or a closed rectangle in $\mathbb{R}^2$. \ Next, each such probability Borel measure is the $w^*$-limit of a net of finitely atomic measures. \ Since the map that sends $\mu$ to its marginal measures $\mu^X$ and $\mu^Y$ is $w^*$-continuous, it follows that Corollary \ref{newcor} now holds for all probability Borel measures.
\end{proof}

It is well known that, for continuous functions on a closed interval, the Lebesgue integral can be calculated as a limit of Riemann sums over partitions whose norms tend to zero \cite{Pihl}. \ We shall use this fact to find explicitly the Berger measure $\mu$ of a $(p,q)$-embedding from the Berger measure $\sigma$ of the original unilateral weighted shift, when the latter is the Bergman shift.
 \ Concretely, we prove that the Berger measure of every $(p,q)$-embedding of $B_+$ is normalized arclength on the graph of $(p,q)$.
 
\begin{theorem} \label{arclengthA} 
(cf. Problem \ref{quest}) \ Let $W_{(\alpha(\omega),\beta(\omega))}$ be a $(p,q)$-embedding of a subnormal weighted shift $W_{\omega}$, and denote by $\mu$ and $\sigma$ the Berger measures of $W_{(\alpha(\omega),\beta(\omega))}$ and $W_{\omega}$, respectively. \ Assume that $\sigma$ is the Lebesgue measure on $[0,1]$. \ Then $d\mu(p(r),q(r))$ is normalized arclength on the curve $C:=\{(p(r),q(r)):0 \le r \le 1\}$.  
\end{theorem}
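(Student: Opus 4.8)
The plan is to re-derive the Berger measure $\mu$ by the Riemann-sum approximation advertised just above the statement, and then to read off the arclength description from the resulting limit formula. By Theorem~\ref{Th4A} we already know that $\mu=\sigma\circ f^{-1}$ is a probability measure carried by $C$; the point here is to display $\mu$ concretely. Since $\sigma$ is Lebesgue measure on $[0,1]$, I would choose for each $N$ a tagged partition $0=r_0^{(N)}<r_1^{(N)}<\cdots<r_N^{(N)}=1$ of $[0,1]$ with mesh tending to $0$ (say the uniform partition with right-endpoint tags, so all tags are positive), and put $\sigma_N:=\sum_{i=1}^{N}(r_i^{(N)}-r_{i-1}^{(N)})\,\delta_{r_i^{(N)}}$. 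Each $\sigma_N$ is a finitely atomic probability measure on $[0,1]$, hence the Berger measure of a recursively generated unilateral weighted shift $W_{\omega^{(N)}}$, and $\sigma_N\to\sigma$ weak-$*$ because for every $g\in C([0,1])$ the number $\int g\,d\sigma_N$ is a Riemann sum for $\int_0^1 g(r)\,dr$.

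Next I would transport this convergence to the embeddings. By Theorem~\ref{Prop1A}, the $(p,q)$-embedding of $W_{\omega^{(N)}}$ has the explicit finitely atomic Berger measure
$$
\mu_N=\sum_{i=1}^{N}\bigl(r_i^{(N)}-r_{i-1}^{(N)}\bigr)\,\delta_{\left(p(r_i^{(N)}),\,q(r_i^{(N)})\right)}.
$$
On one hand, for every continuous $g$ on $\mathbb{R}_+^2$ the quantity $\int g\,d\mu_N=\sum_{i}(r_i^{(N)}-r_{i-1}^{(N)})\,g\bigl(p(r_i^{(N)}),q(r_i^{(N)})\bigr)$ is a Riemann sum for the continuous integrand $r\mapsto g(p(r),q(r))$ on $[0,1]$, so $\int g\,d\mu_N\to\int_0^1 g(p(r),q(r))\,dr$. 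On the other hand, since $p,q$ are polynomials the moments $\gamma_{(k_1,k_2)}[\mu_N]=\int p(r)^{k_1}q(r)^{k_2}\,d\sigma_N(r)$ converge to $\int p(r)^{k_1}q(r)^{k_2}\,d\sigma(r)=\gamma_{(k_1,k_2)}[\mu]$; as all the $\mu_N$ and $\mu$ are supported in the fixed compact rectangle $p([0,1])\times q([0,1])$, convergence of every moment forces $\mu_N\to\mu$ weak-$*$ (Weierstrass approximation together with the Riesz representation theorem). Matching the two limits identifies $\mu$ as the measure $g\mapsto\int_0^1 g(p(r),q(r))\,dr$, i.e.\ the push-forward of normalized Lebesgue measure on $[0,1]$ under $f$, in agreement with Theorem~\ref{Th4A}.

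Finally I would phrase this as normalized arclength. Parametrizing $C$ by $r\in[0,1]$ through $f$, the identity $\int g\,d\mu=\int_0^1 g(p(r),q(r))\,dr$ says exactly that $\mu$ gives each sub-arc $f([a,b])$ the mass $b-a$, i.e.\ $d\mu(p(r),q(r))=dr$; since $f$ is injective and $C$ has total length $L=\int_0^1\sqrt{p'(r)^2+q'(r)^2}\,dr$, this is the normalized arclength measure read off in the parametrization $(p(r),q(r))$, and it coincides with the usual normalized arclength precisely when that parametrization has constant speed, as for the spherically isometric embedding $f(r)=(r,1-r)$ of Theorem~\ref{thmBergman}. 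The one step that needs genuine care is the middle one, namely upgrading moment convergence to weak-$*$ convergence of the $\mu_N$: here one must first observe that the uniform bound $\|W_{\omega}\|$ confines every $\sigma_N$, and hence every $\mu_N$, to a single compact set independent of $N$; the remainder is bookkeeping with Riemann sums, supported by the subnormality already furnished by Theorems~\ref{Th4A} and~\ref{Prop1A}.
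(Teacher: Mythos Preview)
Your proof follows essentially the same Riemann-sum strategy as the paper's: approximate $\sigma$ by finitely atomic probability measures, push them through $f=(p,q)$ via Theorem~\ref{Prop1A}, and pass to the weak-$*$ limit. The paper secures convergence by bounding the mesh of the induced partition on $C$ by a Lipschitz constant ($\sqrt{2}\max\{\|p'\|_\infty,\|q'\|_\infty\}$) times the mesh on $[0,1]$, whereas you argue more directly via moment convergence together with Weierstrass approximation on the fixed compact rectangle $p([0,1])\times q([0,1])$; both routes reach the same limit measure.

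Your final paragraph is a genuine sharpening that the paper glosses over: what the argument actually produces is the push-forward $f_*\sigma$, i.e.\ $d\mu(p(r),q(r))=dr$, and this coincides with true normalized arclength on $C$ only when the parametrization has constant speed $\sqrt{p'(r)^2+q'(r)^2}$, as happens for the spherically isometric embedding $f(r)=(r,1-r)$ of Theorem~\ref{thmBergman} but not, for instance, for the Neil parabolic embedding. The paper uses ``normalized arclength'' in this looser parametrization-dependent sense, and your caveat resolves the ambiguity.
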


\begin{proof}
Consider a partition $\mathcal{P} \equiv \{0 \le r_0<r_1<\cdots<r_{N-1}\}$, 
and let $\left\|\mathcal{P}\right\|$ denote the norm of $\mathcal{P}$. \ 
The net $\{\sum_{i=0}^{N-1}(r_{i+1}-r_i)\delta_{\widetilde{r_i}}\}_{\mathcal{P}}$, 
running over the partially ordered set of all partitions $\mathcal{P}$, 
converges in the $w^*$-topology to the Lebesgue measure on $[0,1]$. \ (As usual, 
the points $\widetilde{r_i}$ belong to the interval $[r_i,r_{i+1}]$.) \ 
For a continuous function $g$ on $[0,1]$, a typical Riemann sum is of the form $\sum_{i=0}^{N-1}(r_{i+1}-r_i)g(\widetilde{r_i})$; thus, it can be interpreted as integration with respect to the $2$-variable finitely atomic measure 
\begin{equation} \label{Riemann}
\sum_{i=0}^{N-1}(r_{i+1}-r_i)\delta_{(p(\widetilde{r_i}),q(\widetilde{r_i}))},
\end{equation}
given by Theorem \ref{Prop1}. \ We also know that the support of $\mu$ is contained in the curve $C$. \ Thus, the set 
$\mathcal{P}(C):=\{(p(\widetilde{r_0}),q(\widetilde{r_0})), \cdots, (p(\widetilde{r_{N-1}}),q(\widetilde{r_{N-1}})\}$ is a partition in the curve $C$, which can be used to approximate the Lebesgue measure on $C$. \ 
Moreover, since the Riemann sum over the original partition $\mathcal{P}$ (thought of as an integral with respect to a finitely atomic measure) has total mass equal to $1$, the finitely atomic measure associated to the partition $\mathcal{P}(C)$ is a probability measure. \ 

It follows that the finitely atomic measures involved in (\ref{Riemann}) are probability measures, as will be any $w^*$-limit of that net. \ If we could establish that the norm of $\mathcal{P}(C)$, $\left\|\mathcal{P}(C)\right\|$ is majorized by a constant multiple of $\left\|\mathcal{P}\right\|$, it would then follow that the Riemann sums in (\ref{Riemann}) $w^*$-converge to normalized Lebesgue measure on $C$; that is, it would $w^*$-converge to normalized arclength. \ 

To complete the proof, we observe that the distance between two consecutive points in $\mathcal{P}(C)$ is at most $\sqrt{2}\max \{\left\|p'\right\|_{\infty},\left\|q'\right\|_{\infty}\}\left\|\mathcal{P}\right\|$, and this quantity majorizes 
$\left\|\mathcal{P}(C)\right\|$, where $p'$ and $q'$ denote the derivatives of $p$ and $q$, respectively. \ (Observe that the norm $\left\|\cdot\right\|_{\infty}$ is calculated over the interval $[0,1]$.)
\end{proof}

\begin{example}
On the closed interval $[0,1]$, consider the $(p,q)$-embedding $W_{(\alpha(\omega),\beta(\omega))}$ generated by the nonnegative polynomials $p(r):=r$ and $q(r):=r(1-r)$. \ By Theorem \ref{Th4A}, the Berger measure $\mu$ of the embedding is related to the Berger measure $\sigma$ of $W_{\omega}$ by the identity
$$
d\mu(r,r(1-r))=d\sigma(r) \quad (r \in [0,1]).
$$
It follows that the moments of $\mu$ and $\sigma$ are related by the identity
\begin{eqnarray} \label{inteq}
\gamma_{(k,\ell)}[\mu]&=&\int_{[0,1]}r^k(r(1-r))^{\ell}d\sigma(r)=(-1)^{\ell}\int_{[0,1]}r^{k+\ell}(r-1)^{\ell}d\sigma(r) \nonumber \\
&=&(-1)^{\ell}\int r^{k+\ell}\sum_{i=0}^{\ell}(-1)^{\ell-i} {\genfrac(){0pt}{0}{\ell}{i}} r^i d\sigma(r)=\int \sum_{i=0}^{\ell}(-1)^i {\genfrac(){0pt}{0}{\ell}{i}} r^{k+\ell+i}d\sigma(r) \nonumber \\
&=&\sum_{i=0}^{\ell}(-1)^i {\genfrac(){0pt}{0}{\ell}{i}} \gamma_{k+\ell+i}[\sigma].
\end{eqnarray}
If we now let $\sigma \equiv \lambda$, the Lebesgue measure on $[0,1]$, we know by Theorem \ref{arclengthA} that $\mu$ is normalized arclength on the curve $t=s-s^2$, for $0 \le s \le 1$. \ The moments of $\mu$ are 
\begin{eqnarray*}
\gamma_{(k,\ell)}[\mu]&=&\int_{[0,1]}r^{k+\ell}(1-r)^{\ell}dr=B(k+\ell+1,\ell+1)
=\frac{\Gamma(k+\ell+1)\Gamma(\ell+1)}{\Gamma(k+2\ell+2)} \\
&=&\frac{(k+\ell)!\ell !}{(k+2\ell+1)!},
\end{eqnarray*}
where $B$ and $\Gamma$ denote the classical Beta and Gamma functions, respectively. \ On the other hand, (\ref{inteq}) says that
$$
\gamma_{(k,\ell)}[\mu]=\sum_{i=0}^{\ell}(-1)^i {\genfrac(){0pt}{0}{\ell}{i}} \gamma_{k+\ell+i}[\lambda]=
\sum_{i=0}^{\ell}(-1)^i {\genfrac(){0pt}{0}{\ell}{i}} \frac{1}{k+\ell+i+1},
$$
and therefore
\begin{equation} \label{newequation}
\sum_{i=0}^{\ell}(-1)^i {\genfrac(){0pt}{0}{\ell}{i}} \frac{1}{k+\ell+1+i}=\frac{(k+\ell)!\ell !}{(k+2\ell+1)!}.
\end{equation}
In particular, 
$$
\lim_{\ell \rightarrow \infty}\sum_{i=0}^{\ell}(-1)^i {\genfrac(){0pt}{0}{\ell}{i}} \frac{1}{k+\ell+1+i}=0.
$$
While (\ref{newequation}) is known (see for instance \cite[vol. 3, (3.8)]{Gould}), it is interesting that it can be obtained as a by-product of a calculation of moments for the $(r,r(1-r))$-embedding.

\end{example}

\bigskip
{\bf The Neil Parabolic Embedding}. \ The {\it Neil parabola} 
$$
NP:=\{(r^2,r^3):r \in [0,1]\}
$$
(also known as the {\it semicubical parabola}) has been studied in various contexts; see for instance \cite{Kne} and \cite{Pic}. \ 
Here we will use it to produce a simple example, which will lead to an answer to Problem \ref{prob5}. \ First, we recall that given a measure $\mu $ on $X\times Y$, the \textit{marginal measure} $\mu
^{X}$ is given by $\mu ^{X}:=\mu \circ \pi _{X}^{-1}$, where $\pi
_{X}:X\times Y\rightarrow X$ is the canonical projection onto $X$. \ Thus $%
\mu ^{X}(E)=\mu (E\times Y)$, for every $E\subseteq X$ (cf. Subsection \ref{disintegration}). 
 
\begin{example}
\label{ex102} Let $W_{\omega }$ be a subnormal unilateral weighted shift, with Berger measure $\sigma$, let $p(r)=r^{2}$ and $q(r)=r^{3}$, and let $f:=(p,q)$. \ Then 
the $(p,q)$-embedding $\mathcal{PE}(\omega;f)$ is subnormal, with Berger measure $\mu$ supported on $NP$, such that
$$
d\mu(r^2,r^3)=d\sigma(r).
$$
Furthermore, $W_{\left( \alpha ,\beta \right) }$ has the weight diagram
shown in Figure \ref{Fig 3}(ii). \ By direct calculation, one observes that the weight diagram for this embedding is intimately related to the weight diagram for the integer powers of $W_{\theta(\omega)}$. \ Since $W_{\theta(\omega)}$ is subnormal, let us denote by 
$\nu$ its Berger measure; we know that $\supp \; \nu \subseteq \{(u,u): u \in \mathbb{R}_+ \}$. \ For $(s,t) \in NP$, we then have 
$$
d\mu \left( s,t\right) =d\nu \left( s^{\frac{1}{2}},t^{\frac{1}{3}}\right).
$$
Moreover, the last expression also represents the Berger measure of $W_{\theta(\omega)}$ restricted to the reducing subspace 
$\mathcal{H}_{(0,0)}^{(2,3)}$ (see (\ref{decomposition}) in Section \ref{sectionclassical} for the precise definition of this subspace). \ In the special case of $W_{\omega }=B_{+}$, we know that $\sigma$ is Lebesgue measure on $[0,1]$, and therefore
$$
d\mu(r^2,r^3)=dr.
$$
By Corollary \ref{newcor}, 
$$
d\left(\mu^X \left( s\right)\right) =d\left( s^{\frac{1}{2}}\right) =\frac{1}{2\sqrt{s}}ds
$$
and
$$
d\left(\mu^Y \left( t\right)\right) =d\left( t^{\frac{1}{3}}\right) =\frac{1}{3t^{\frac{2}{3}}}dt.
$$
A simple calculation of moments reveals that this consistent with the information in Figure \ref{Fig 3}(ii). \qed
\end{example}



\section{Application 1: The Classical Case -- $k$-hyponormality in the class $\{\mathcal{CE}(\omega):W_{\omega} \textrm{ a subnormal unilateral weighted shift} \}$ } \label{sectionclassical}

In this section, we consider the class of $2$-variable weighted shifts obtained as classical embeddings of unilateral weighted shifts. \ We ask whether a
power of $W_{\theta(\omega)}$ can be (jointly) $k$-hyponormal, under the assumption that $W_{\omega }$ is $k$-hyponormal $\left( k\geq 1\right) $. \ The following
examples will answer this question. \ First, recall that for every positive integer $m$ we have
\begin{equation} \label{orthosum}
W_{\omega }^{m}\cong W_{\omega \left( m:0\right) }\oplus W_{\omega \left(
m:1\right) }\oplus \cdots \oplus W_{\omega \left( m:m-1\right) },
\end{equation}
where the unilateral weighted shift $W_{\omega \left( m:i\right) }$ is associated with the sequence $\omega\left( m:i\right):=
\{\omega_{i+k}\omega_{i+k+1} \cdots \omega_{i+k+m-1}\}_{k \ge 0}$, for $i=0,1,\cdots,m-1$ (cf. \cite{CuP}).

If $W_{\omega }$ is subnormal with Berger measure $\sigma $, then each summand $W_{\omega \left( m:i\right) }$ is subnormal $(0\leq i\leq m-1)$. \ Let $\sigma _{i}$
be the Berger measure for $W_{\omega \left(
m:i\right) } \; (0 \le i \le m-1)$. \label{sigma0} \ In what follows, we will need the following result.

\begin{theorem}(\cite[Theorem 2.9]{CuP}) \label{CurtoPark}
$$
d\sigma _{0}\left( r\right) =d\sigma \left(
r^{\frac{1}{m}}\right)
$$
and 
$$
d\sigma _{i}\left( r\right) =\frac{r^{i}}{\omega
_{0}^{2}\cdots \omega _{i-1}^{2}}d\sigma \left( r^{\frac{1}{m}}\right) \; \; (1 \le i \le m-1).
$$
\end{theorem}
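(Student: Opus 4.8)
The plan is to reduce the claim to an identity between moment sequences and then appeal to the uniqueness of representing measures. Since $W_{\omega}$ is subnormal, each summand $W_{\omega(m:i)}$ in the decomposition (\ref{orthosum}) is subnormal, so its Berger measure $\sigma_i$ exists and is supported in a compact subinterval of $\mathbb{R}_+$. Because the Hausdorff moment problem on a compact interval is determinate, two such measures coincide once all their moments agree; thus it suffices to show that the measure $\tau_i$ appearing on the right-hand side of the statement satisfies $\gamma_k(W_{\omega(m:i)}) = \int r^k \, d\tau_i(r)$ for every $k \geq 0$.

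First I would compute the moments of $W_{\omega(m:i)}$ directly from its weights. By definition $\gamma_k(W_{\omega(m:i)}) = \prod_{j=0}^{k-1} \omega(m:i)_j^2$, and each entry $\omega(m:i)_j$ is a block of $m$ consecutive weights of $W_{\omega}$, the blocks stepping by $m$ as $j$ increases. Reindexing the resulting double product, the indices that appear are exactly $i, i+1, \dots, i+mk-1$, each once, so that
\[
\gamma_k(W_{\omega(m:i)}) = \omega_i^2 \omega_{i+1}^2 \cdots \omega_{i+mk-1}^2 = \frac{\omega_0^2 \cdots \omega_{i+mk-1}^2}{\omega_0^2 \cdots \omega_{i-1}^2} = \frac{\gamma_{mk+i}(W_{\omega})}{\omega_0^2 \cdots \omega_{i-1}^2},
\]
using the moment notation $\gamma_n(W_{\omega}) = \omega_0^2 \cdots \omega_{n-1}^2$ of (\ref{moment2}); for $i = 0$ the factor $\omega_0^2 \cdots \omega_{i-1}^2$ is the empty product $1$, which already gives $\gamma_k(W_{\omega(m:0)}) = \gamma_{mk}(W_{\omega})$.

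Next I would bring in the Berger measure $\sigma$ of $W_{\omega}$. By (\ref{moment2}),
\[
\gamma_k(W_{\omega(m:i)}) = \frac{1}{\omega_0^2 \cdots \omega_{i-1}^2} \int_I \rho^{mk+i} \, d\sigma(\rho) = \frac{1}{\omega_0^2 \cdots \omega_{i-1}^2} \int_I (\rho^m)^{k} \, \rho^{i} \, d\sigma(\rho) .
\]
Performing the change of variables $r = \rho^m$ --- that is, pushing $\sigma$ forward under $\rho \mapsto \rho^m$ while carrying along the factor $\rho^{i}$ --- rewrites the last integral as $\int r^k \, d\tau_i(r)$, where $\tau_i$ is precisely the measure described on the right-hand side of the statement (with $d\sigma(r^{1/m})$ understood as the push-forward of $\sigma$ under $\rho \mapsto \rho^m$, so that $\int g(r)\,d\sigma(r^{1/m}) = \int g(\rho^m)\,d\sigma(\rho)$). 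Since the moments of $\tau_i$ and of $\sigma_i$ agree for all $k$, determinacy forces $\sigma_i = \tau_i$, which is the assertion.

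The whole argument lives in the bookkeeping of these two computations, and that is also where the only real care is needed: one must get the reindexing of the product $\prod_{j,\ell} \omega_{i+mj+\ell}^{2}$ right, so that the exponent $mk+i$ appears in $\gamma_{mk+i}(W_{\omega})$, and then track the change of variables $r=\rho^m$ carefully so that the density of $\tau_i$ emerges in the stated form. There is no conceptual obstacle: existence of the $\sigma_i$ is given by subnormality of $W_{\omega}$, and uniqueness is automatic from the compactness of the support of the spectral data.
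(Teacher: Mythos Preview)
The paper does not supply its own proof of this statement; it is simply quoted from \cite[Theorem 2.9]{CuP}. Your moment-matching argument---compute $\gamma_k(W_{\omega(m:i)})=\gamma_{mk+i}(W_\omega)/\gamma_i(W_\omega)$, rewrite via $\sigma$, push forward under $\rho\mapsto\rho^m$, and invoke determinacy of the Hausdorff moment problem on a compact interval---is the standard route and is essentially how the result is proved in \cite{CuP}.

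One step deserves more than the hand-wave you give it. When you ``carry along the factor $\rho^{i}$'' through the substitution $r=\rho^m$, that factor becomes $r^{i/m}$, not $r^{i}$:
\[
\frac{1}{\omega_0^{2}\cdots\omega_{i-1}^{2}}\int(\rho^m)^{k}\,\rho^{i}\,d\sigma(\rho)
=\int r^{k}\cdot\frac{r^{i/m}}{\omega_0^{2}\cdots\omega_{i-1}^{2}}\,d\sigma\bigl(r^{1/m}\bigr).
\]
Thus your computation actually yields $d\sigma_i(r)=\dfrac{r^{i/m}}{\omega_0^{2}\cdots\omega_{i-1}^{2}}\,d\sigma\bigl(r^{1/m}\bigr)$, which is the correct formula from \cite{CuP}; the exponent $r^{i}$ in the displayed version here is a misprint. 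Your closing remark rightly flags the change of variables as the place needing care---but you should carry it out explicitly rather than assert that $\tau_i$ ``is precisely the measure described on the right-hand side of the statement,'' since as printed that right-hand side does not match what your own argument produces.
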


Related to the decomposition (\ref{orthosum}) and Theorem \ref{CurtoPark}, we now prove:

\begin{theorem}
\label{Thm1}Let $r$ be a nonnegative real number and $0\leq i\leq m-1$. \medskip
\newline
(i) if $r\in \supp \; \sigma $ and $r\neq 0$, then $r^{m}\in \supp \; \sigma _{i} \; \; (1 \le i \le m-1)$;\medskip \newline
(ii) if $r\in \supp \; \sigma _{i}$ and $r\neq 0$,
then $r^{\frac{1}{m}}\in \supp \; \sigma \; (1 \le i \le m-1)$;\medskip \newline
(iii) $0\in \supp \; \sigma $ if and only if $0\in \supp \; \sigma _{0}$.
\end{theorem}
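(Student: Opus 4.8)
The plan is to reduce everything to the pushforward identity $d\sigma_0(r) = d\sigma(r^{1/m})$ from Theorem \ref{CurtoPark}, together with the weighted-density formula for $\sigma_i$ with $i \ge 1$. For parts (i) and (ii), the key observation is that the map $\phi_m : r \mapsto r^m$ is a homeomorphism of $\mathbb{R}_+$, and the density $\frac{r^i}{\omega_0^2 \cdots \omega_{i-1}^2}$ relating $\sigma_i(r)$ to $\sigma(r^{1/m})$ is continuous and \emph{strictly positive for $r \neq 0$}. Since multiplying a measure by a density that is strictly positive on a set does not change the support there, I would argue that $\supp \sigma_i \setminus \{0\} = \phi_m(\supp \sigma \setminus \{0\})$, which immediately gives both (i) and (ii). Concretely: $r \in \supp\sigma$, $r \neq 0$, means every neighborhood $U$ of $r$ has $\sigma(U) > 0$; then $\phi_m(U)$ is a neighborhood of $r^m$, and by the pushforward/density relation $\sigma_i(\phi_m(U)) = \int_{\phi_m(U)} \frac{u^i}{\omega_0^2\cdots\omega_{i-1}^2}\, d\sigma(u^{1/m})$, whose integrand is bounded below by a positive constant on a small enough neighborhood (staying away from $0$), so $\sigma_i(\phi_m(U)) > 0$. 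Running the argument with $\phi_m^{-1}$ gives (ii).

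For part (iii), the density trick breaks down precisely because the relevant density can vanish at $0$ (for $i \ge 1$), which is exactly why the statement only asserts the equivalence for $\sigma_0$. Here I would use $d\sigma_0(r) = d\sigma(r^{1/m})$ directly: since $\phi_m$ is a homeomorphism fixing $0$, and $\sigma_0 = \sigma \circ \phi_m^{-1}$ (no density), we have $0 \in \supp\sigma_0$ iff every neighborhood of $0$ has positive $\sigma_0$-mass iff every neighborhood of $0$ has positive $\sigma$-mass (pulling back through $\phi_m$), iff $0 \in \supp\sigma$. This is essentially the general fact that support is preserved under pushforward by a homeomorphism.

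I expect the main technical point — and the only place requiring care — to be the handling of the endpoint/boundary behavior near $r = 0$ in parts (i) and (ii): one must make sure that when $r \neq 0$ one can choose a neighborhood bounded away from $0$ so that the density $u^i$ has a positive lower bound, and that $\phi_m$ (and its inverse $u \mapsto u^{1/m}$, which is only Hölder, not Lipschitz, at $0$) still maps small neighborhoods to small neighborhoods — which it does, being a homeomorphism, so no quantitative modulus is actually needed. A secondary subtlety is simply being careful that Theorem \ref{CurtoPark} is being applied on $\supp\sigma$ where $W_\omega$ is subnormal, so all the $\sigma_i$ genuinely exist as the Berger measures of the summands $W_{\omega(m:i)}$; this was already recorded in the paragraph preceding Theorem \ref{CurtoPark}. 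Once these points are settled, each of (i), (ii), (iii) is a short paragraph.
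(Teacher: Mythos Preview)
Your proposal is correct and follows essentially the same approach as the paper: both arguments rest on Theorem~\ref{CurtoPark}, the fact that $r\mapsto r^{m}$ is a homeomorphism of $\mathbb{R}_{+}$, and the observation that the density $r^{i}/(\omega_{0}^{2}\cdots\omega_{i-1}^{2})$ is strictly positive away from $0$. The paper simply records the resulting equivalences $\sigma_{i}(N_{r}^{m})>0 \Leftrightarrow \sigma(N_{r})>0$ and $\sigma_{i}(N_{r})>0 \Leftrightarrow \sigma(N_{r}^{1/m})>0$ and reads off (i)--(iii), whereas you unpack why these equivalences hold; your version is, if anything, more explicit than the paper's.
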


\begin{proof}
Let $\mathrm{Bo}(\mathbb{R}_{+})$ denote the Borel $\sigma $-algebra on $%
\mathbb{R}_{+}$. \ For $N \in \mathrm{Bo}(\mathbb{R}_{+})$, we let 
$N^{\frac{1}{m}}:=\left\{ r^{\frac{1}{m}}:r\in N \right\}$ and 
$N^{m}:=\left\{ r^{m}:r\in N \right\}$.\medskip \newline
(i): \ Since $r\in \supp \; \sigma $, there exists $N_{r}\in \mathrm{Bo}(%
\mathbb{R}_{+})$ such that $r\in N_{r}$ and $\sigma \left( N_{r}\right) >0$. \
Note that
\begin{equation}
\sigma _{0}\left( N_{r}^{m}\right) >0\Longleftrightarrow \sigma \left(N_{r}\right) >0 \label{relation 11} 
\end{equation} 
and 
\begin{equation}
\sigma _{i}\left( N_{r}^{m}\right) >0\Longleftrightarrow \sigma \left( N_{r}\right) >0. \label{relation 12}
\end{equation} 
Since $N_{r}^{m}\in \mathrm{Bo}(\mathbb{R}_{+})$, by the definition of $N_{r}^{m}$ and (\ref{relation 12}) we must have $r^{m}\in \supp \; \sigma _{i}$,
as desired.\medskip \newline
(ii): \ Since $r\in \supp \; \sigma _{i}$, there exists $N_{r}\in \mathrm{Bo%
}(\mathbb{R}_{+})$ such that $r\in N_{r}$ and $\sigma _{i}\left( N_{r}\right)
>0 $. \ Note that
\begin{equation}
\sigma _{0}\left( N_{r}\right) >0\Longleftrightarrow \sigma \left( N_{r}^{\frac{1}{m}}\right) >0 \label{relation 21}
\end{equation} 
and 
\begin{equation}
\sigma _{i}\left( N_{r}\right) >0\Longleftrightarrow \sigma \left( N_{r}^{\frac{1}{m}}\right) >0. \label{relation 22}
\end{equation} 
Since $N_{r}^{\frac{1}{m}}\in \mathrm{Bo}(\mathbb{R}_{+})$, by the definition of $N_{r}^{\frac{1}{m}}$ and (\ref{relation 22}) we must have $r^{\frac{1}{m}}\in \supp \; \sigma $, as desired.\newline
(iii): \ Since $d\sigma _{0}\left( r\right) =d\sigma \left( r^{\frac{1}{m}}\right)
$, the equivalences (\ref{relation 11}) and (\ref{relation 21}) give the desired conclusion.
\end{proof}

We now establish:

\begin{theorem}
\label{Thm2}(cf. Problem \ref{prob2}(ii)) \ Let $W_{\omega }$ be a subnormal unilateral weighted shift, and let $m$ be a positive integer. \ Assume that $W_{\omega }^{m}$ is a direct sum of recursively generated weighted shifts. \ Then $W_{\omega }$ is a recursively generated weighted shift.
\end{theorem}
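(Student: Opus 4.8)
The plan is to prove the contrapositive-free version directly: assuming $W_\omega$ is subnormal with Berger measure $\sigma$ and that each summand $W_{\omega(m:i)}$ ($0\le i\le m-1$) in the decomposition (\ref{orthosum}) is recursively generated, I want to conclude that $\sigma$ is finitely atomic, which is exactly the statement that $W_\omega$ is recursively generated. The engine is Theorem \ref{CurtoPark}, which says $d\sigma_i(r)=\frac{r^i}{\omega_0^2\cdots\omega_{i-1}^2}\,d\sigma(r^{1/m})$ for $1\le i\le m-1$ and $d\sigma_0(r)=d\sigma(r^{1/m})$; in particular every $\sigma_i$ is, up to a density that is a (strictly positive, away from $0$) power of $r$, the push-forward of $\sigma$ under $r\mapsto r^m$. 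The key observation is that the map $\Phi:r\mapsto r^m$ is a bijection of $\mathbb{R}_+$ with continuous inverse, so it carries atoms to atoms and, crucially, carries the support of $\sigma$ onto the support of $\sigma_0$ (and, by Theorem \ref{Thm1}, the nonzero part of the support of any $\sigma_i$).

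First I would reduce to $\sigma_0$: since $W_{\omega(m:0)}$ is assumed recursively generated, $\sigma_0$ is finitely atomic, say $\sigma_0=\sum_{j=0}^{N-1}c_j\delta_{t_j}$ with $c_j>0$. Then from $d\sigma_0(r)=d\sigma(r^{1/m})$, i.e. $\sigma=\sigma_0\circ\Phi$ where $\Phi(r)=r^m$ (equivalently $\sigma$ is the push-forward of $\sigma_0$ under $t\mapsto t^{1/m}$), I get immediately
\[
\sigma=\sum_{j=0}^{N-1}c_j\,\delta_{t_j^{1/m}},
\]
which is finitely atomic. Hence $W_\omega$ is recursively generated. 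This is essentially a one-line argument once Theorem \ref{CurtoPark} is in hand, and it does not even require the hypothesis on $\sigma_1,\dots,\sigma_{m-1}$.

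The one point that deserves care — and which I expect to be the only real obstacle — is the direction of the identity in Theorem \ref{CurtoPark}: the statement $d\sigma_0(r)=d\sigma(r^{1/m})$ must be read as telling us how to recover $\sigma$ from $\sigma_0$ by the change of variable $r\mapsto r^{1/m}$, and one should check that this is a genuine measure-theoretic push-forward identity (valid on all Borel sets, via $\sigma(E)=\sigma_0(E^m)$ as in the proof of Theorem \ref{Thm1}, where $E^m=\{r^m:r\in E\}$) rather than merely an equality of moments. Since $\Phi$ and $\Phi^{-1}$ are continuous bijections of $\mathbb{R}_+$, a finite atomic measure pushes forward to a finite atomic measure with the same number of atoms and the same weights, relocated at the images of the atoms; there is no loss or coalescing of mass. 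A small alternative, if one prefers to avoid invoking the push-forward form directly, is to use Theorem \ref{Thm1}: $\operatorname{supp}\sigma_0=\Phi(\operatorname{supp}\sigma)$, so $\operatorname{supp}\sigma$ is the continuous injective preimage of the finite set $\operatorname{supp}\sigma_0$, hence finite; a subnormal weighted shift with finite Berger support has finitely atomic Berger measure, and is therefore recursively generated (Subsection \ref{RGWS}). Either route closes the argument.
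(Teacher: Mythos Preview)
Your proposal is correct; your alternative route via Theorem \ref{Thm1} is precisely the paper's (one-line) proof, which simply cites Theorem \ref{Thm1} together with the equivalence of ``recursively generated'' with ``finitely atomic Berger measure'' from Subsection \ref{RGWS}. Your primary route---pushing $\sigma_0$ forward directly under $t\mapsto t^{1/m}$ from Theorem \ref{CurtoPark}---is an equally valid shortcut that bypasses Theorem \ref{Thm1} and yields the minor strengthening that only the recursive generation of $W_{\omega(m:0)}$ is actually needed.
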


\begin{proof}
This follows easily from Theorem \ref{Thm1} and the fact that a subnormal unilateral weighted shift $W_{\omega}$ is recursively generated if and only if its Berger measure $\sigma$ is finitely atomic, that is, $\card (\supp \; \sigma) < +\infty$ (cf. Subsection \ref{RGWS} in the Appendix).
\end{proof}

By analogy with the decomposition used in \cite{CuP}, we now introduce a decomposition for a commuting $2$-variable weighted shift $W_{\left( \alpha ,\beta \right) }$ which will allow us to analyze its integer powers. \ Concretely, we split the ambient space 
$\mathcal{H} \equiv \ell ^{2}(\mathbb{Z}_{+}^{2})$ as an orthogonal direct sum $\mathcal{H} \equiv \bigoplus_{p=0}^{m-1}\bigoplus_{q=0}^{n-1}\mathcal{H}_{(p,q)}^{(m,n)}$, where
\begin{equation}
\mathcal{H}_{(p,q)}^{(m,n)}:=\vee \{e_{(mi+p,nj+q)}:i=0,1,2,\cdots
,j=0,1,2,\cdots \} \quad \quad
\left\{
\begin{tabular}{l}
$(0 \le p \le m-1)$ \\
$(0 \le q \le n-1)$
\end{tabular}
\right. \label{decomposition}
\end{equation} 
(see Figure \ref{Fig 5}).  



\setlength{\unitlength}{1mm} \psset{unit=1mm}
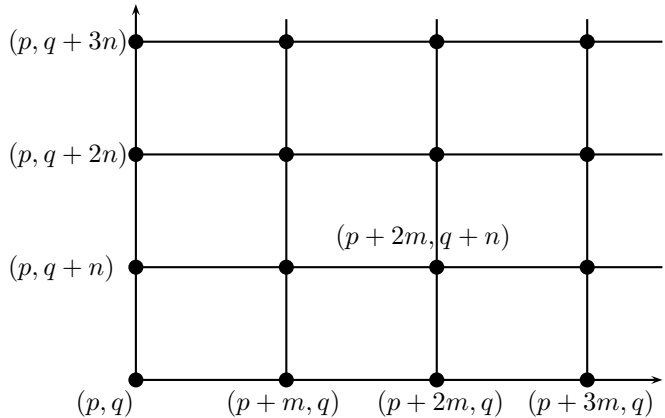
\begin{figure}[th]
\begin{center}
\begin{picture}(135,55)

\rput(18,25){
\psline{->}(15,5)(85,5)
\psline(15,20)(85,20)
\psline(15,35)(85,35)
\psline(15,50)(85,50)

\psline{->}(15,5)(15,55)
\psline(35,5)(35,53)
\psline(55,5)(55,53)
\psline(75,5)(75,53)

\put(7,1){\footnotesize{$(p,q)$}}
\put(27,1){\footnotesize{$(p+m,q)$}}
\put(47,1){\footnotesize{$(p+2m,q)$}}
\put(67,1){\footnotesize{$(p+3m,q)$}}

\put(-2,19){\footnotesize{$(p,q+n)$}}
\put(-2,34){\footnotesize{$(p,q+2n)$}}
\put(-2,49){\footnotesize{$(p,q+3n)$}}

\pscircle*(15,5){1}
\pscircle*(15,20){1}
\pscircle*(15,35){1}
\pscircle*(15,50){1}
\pscircle*(35,5){1}
\pscircle*(35,20){1}
\pscircle*(35,35){1}
\pscircle*(35,50){1}
\pscircle*(55,5){1}
\pscircle*(55,20){1}
\pscircle*(55,35){1}
\pscircle*(55,50){1}
\pscircle*(75,5){1}
\pscircle*(75,20){1}
\pscircle*(75,35){1}
\pscircle*(75,50){1}

\put(41.5,23){\footnotesize{$(p+2m,q+n)$}}
}
\end{picture}
\end{center}
\caption{Schematic diagram of the subspace $\mathcal{H}_{(p,q)}^{(m,n)}$}
\label{Fig 5}
\end{figure}


We now let $\mathcal{M}:=\vee\{e_{(k_1,k_2)}:k_1 \ge 0, k_2 \ge 1 \}$ and $\mathcal{N}:=\vee\{e_{(k_1,k_2)}:k_1 \ge 1, k_2 \ge 0 \}$. \label{MN} 

Each of the subspaces $\mathcal{H}_{(p,q)}^{(m,n)}$ reduces $T_{1}^{m}$ and $T_{2}^{n}$,
and $W_{\left( \alpha ,\beta \right) }^{(m,n)}$ is subnormal if and only if
each $\left.W_{\left( \alpha ,\beta \right) }^{(m,n)}\right|_{\mathcal{H}%
_{(p,q)}^{(m,n)}}$ is subnormal. \ Moreover, each $\left.W_{\left( \alpha ,\beta \right) }^{(m,n)}\right|_{\mathcal{H}%
_{(p,q)}^{(m,n)}}$ is a $2$-variable weighted shift.  

We now recall a simple criterion to detect the $k$-hyponormality of $2$%
-variable weighted shifts.

\begin{lemma}
\label{khypo} (\cite[Theorem 2.4]{CLY1}) The following statements are
equivalent:\newline
(i) $\ W_{(\alpha ,\beta )}$ is $k$-hyponormal;\newline
(ii) $(\gamma
_{(u_{1},u_{2})+(n,m)+(p,q)})_{_{0\leq p+q\leq k}^{0\leq n+m\leq k}}\geq 0$
for all $(u_{1},u_{2})\in \mathbb{Z}_{+}^{2}$.
\end{lemma}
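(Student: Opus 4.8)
The plan is to unwind the definition of $k$-hyponormality, exploit the weighted-shift structure of $W_{(\alpha,\beta)}$ to reduce the positivity of a large operator matrix of commutators to a family of finite-dimensional positivity conditions (one for each ``base point'' in $\mathbb{Z}_+^2$), and then recognize each of those finite conditions as the positivity of a truncated moment matrix.

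First I would record the definition: writing $\mathbf S$ for the $N$-tuple of all monomials $\{T_1^{n}T_2^{m}:1\le n+m\le k\}$, where $(T_1,T_2)\equiv W_{(\alpha,\beta)}$ and $N=\binom{k+2}{2}-1$, the shift $W_{(\alpha,\beta)}$ is $k$-hyponormal exactly when $\mathbf S$ is (jointly) hyponormal, i.e.\ the operator matrix $M:=\bigl([(T_1^{p}T_2^{q})^{*},\,T_1^{n}T_2^{m}]\bigr)$, indexed by the pairs $(n,m)$ and $(p,q)$ with $1\le n+m\le k$ and $1\le p+q\le k$, is positive on the orthogonal sum of $N$ copies of $\ell^2(\mathbb{Z}_+^2)$. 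Since $M$ is bounded, it is enough to test $M\ge 0$ against finitely supported vectors.

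Next I would use that every monomial acts diagonally up to a shift: $T_1^{n}T_2^{m}e_{\mathbf u}=\sqrt{\gamma_{\mathbf u+(n,m)}/\gamma_{\mathbf u}}\;e_{\mathbf u+(n,m)}$, where $\gamma_{\mathbf u}$ is the (path-independent, $\gamma_{(0,0)}=1$) moment array of $W_{(\alpha,\beta)}$. Consequently $(T_1^{p}T_2^{q})^{*}T_1^{n}T_2^{m}$ and $T_1^{n}T_2^{m}(T_1^{p}T_2^{q})^{*}$ both send $e_{\mathbf u}$ to a scalar multiple of the same basis vector $e_{\mathbf u+(n-p,\,m-q)}$, with scalars that are explicit ratios of $\gamma$-entries (and that vanish on the appropriate ``boundary'' sets in $\mathbb{Z}_+^2$). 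Plugging a finitely supported $F=\bigoplus_{1\le n+m\le k}f_{(n,m)}$, $f_{(n,m)}=\sum_{\mathbf u}c^{(n,m)}_{\mathbf u}e_{\mathbf u}$, into $\langle MF,F\rangle$ and substituting $\mathbf v:=\mathbf u+(n,m)$ reorganizes the double sum as $\langle MF,F\rangle=\sum_{\mathbf v\in\mathbb{Z}_+^2}Q_{\mathbf v}$, where each $Q_{\mathbf v}$ is a finite Hermitian quadratic form in the variables $d^{(n,m)}:=c^{(n,m)}_{\mathbf v-(n,m)}$ (read as $0$ whenever $\mathbf v-(n,m)\notin\mathbb{Z}_+^2$). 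Hence $M\ge 0$ if and only if $Q_{\mathbf v}\ge 0$ for every $\mathbf v\in\mathbb{Z}_+^2$.

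Finally I would identify the forms $Q_{\mathbf v}$. After the diagonal rescaling $d^{(n,m)}\mapsto d^{(n,m)}/\sqrt{\gamma_{\mathbf v-(n,m)}}$, a direct evaluation of the commutator scalars puts $Q_{\mathbf v}$ into the shape ``$\gamma_{\mathbf v}\cdot(\text{a rank-one positive form})\;-\;(\text{a Hankel-type block built from the }\gamma\text{'s})$''; a Schur-complement manipulation, pivoting on the positive quantity $\gamma_{\mathbf v}$, then converts the statement ``$Q_{\mathbf v}\ge 0$ for all $\mathbf v$'' into the statement ``$\bigl(\gamma_{\mathbf u+(n,m)+(p,q)}\bigr)_{0\le n+m\le k,\;0\le p+q\le k}\ge 0$ for all $\mathbf u\in\mathbb{Z}_+^2$'' — the degree-$0$ index $(0,0)$ being contributed by the $\gamma_{\mathbf v}$ term, and the change of indexing variable $\mathbf v\leftrightarrow\mathbf u$ being the associated shift of base point. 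I expect this last bookkeeping — fitting the rescaled $Q_{\mathbf v}$, together with the ``boundary'' corrections that occur for small $\mathbf v$, precisely into the bordered moment matrix of (ii) — to be the main obstacle; it is handled just as in the one-variable characterization of $k$-hyponormality for unilateral weighted shifts, with $\mathbb{Z}_+$ replaced throughout by $\mathbb{Z}_+^2$.
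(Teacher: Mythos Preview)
The paper does not prove this lemma: it is stated with the citation \cite[Theorem 2.4]{CLY1} and used as a black box, so there is no proof here to compare against. Your outline is the standard route taken in that reference---reduce the operator-matrix positivity to a family of scalar quadratic forms indexed by lattice points via the diagonal action $T_1^nT_2^m e_{\mathbf u}=\sqrt{\gamma_{\mathbf u+(n,m)}/\gamma_{\mathbf u}}\,e_{\mathbf u+(n,m)}$, then identify each form with a bordered moment matrix---and it is correct in its architecture.

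The one place where your sketch is genuinely thin is the final ``Schur-complement/bookkeeping'' paragraph. In the actual computation, after the rescaling $d^{(n,m)}\mapsto d^{(n,m)}/\sqrt{\gamma_{\mathbf v-(n,m)}}$ the quadratic form $Q_{\mathbf v}$ has entries $\gamma_{\mathbf v}-\gamma_{\mathbf v-(n,m)}\gamma_{\mathbf v-(p,q)}/\gamma_{\mathbf v-(n,m)-(p,q)}$ (with the second term absent when $\mathbf v-(n,m)-(p,q)\notin\mathbb Z_+^2$), and one must show that nonnegativity of all these forms is equivalent to nonnegativity of the moment matrices $(\gamma_{\mathbf u+(n,m)+(p,q)})_{0\le n+m,\,p+q\le k}$. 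The passage is not a single Schur complement against $\gamma_{\mathbf v}$ as you describe; rather, one direction uses that the moment matrix at $\mathbf u=\mathbf v-(k,k)$ (or more precisely at the appropriate shifted base point) dominates a congruent copy of $Q_{\mathbf v}$, and the other direction proceeds by noting that the moment-matrix condition at $\mathbf u$ is exactly the Smul'jan/Cholesky criterion applied to $Q_{\mathbf u+(k,k)}$-type data. You should expect to spend most of the effort making that equivalence precise, including the boundary cases where some $\mathbf v-(n,m)$ fall outside $\mathbb Z_+^2$; the one-variable template in \cite{QHWS} and \cite{CuFi1} is the right guide.
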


\begin{example}
\label{ex103} Consider $W_{\theta(\omega)} \equiv \mathbf{(}%
T_{1},T_{2})$ given by Figure \ref{Fig 1}(ii), that is, $\alpha
_{(k_{1},k_{2})}=\beta _{(k_{1},k_{2})}=\gamma_{k_1+k_2}(\omega)$ for all $k_{1},k_{2}\ge0$. \ Assume that 
\begin{equation*}
W_{\omega }=\shift (\alpha _{(0,0)},\alpha _{(1,0)},\cdots )=\shift (\sqrt{x},\sqrt{\frac{2}{3}},\sqrt{\frac{3}{4}},\sqrt{\frac{4}{5}}%
,\cdots ).
\end{equation*}
($W_{\omega}$ is a rank-one perturbation of $B_+$, and it was previously studied in \cite{QHWS}.) \ Then, we have:\newline
(i) \ $W_{\omega }$ is hyponormal if and only if $W_{\theta(\omega)}$ is (jointly) hyponormal if and
only if $0<x\leq \frac{2}{3}\simeq0.667$.\medskip \newline
(ii) \ $W_{\omega }$ is $2$-hyponormal if and only if $W_{\theta(\omega)}$ is (jointly) $2$-hyponormal if and only if $0<x\leq
\frac{9}{16}\simeq 0.563$.$\medskip $\newline
(iii) \ $W_{\omega }$ is $3$-hyponormal if and only if $W_{\theta(\omega)}$ is (jointly) $3$-hyponormal if and only if $0<x\leq
\frac{8}{15}\simeq 0.533$.$\medskip $\newline
(iv) \ $W_{\left( \alpha ,\beta \right) }^{\left( 2,3\right) }=W_{\theta(\omega)}^{\left( 2,3\right) }$ is (jointly) hyponormal if and
only if $0<x\leq \frac{2}{3}\simeq 0.667$.$\medskip $\newline
(v) \ $\left.W_{\theta(\omega)}^{\left( 2,3\right) }\right|_{\mathcal{H}_{(0,0)}^{(2,3)}}$ is (jointly) $2$-hyponormal if and only if $0<x\leq \frac{%
49}{90}\simeq 0.544$.$\medskip $\newline
As a consequence, we have:\newline
(vi) \ $W_{\theta(\omega)}$ is (jointly) $2$-hyponormal but $W_{\theta(\omega)}^{\left( 2,3\right) }$ is not (jointly)
2-hyponormal, when $\frac{49}{90}<x\leq \frac{9}{16}$.

\medskip
We establish (i)--(vi) by direct calculations using (\ref{decomposition}), Lemma \ref{khypo}, and {\it Mathematica} \cite{Wol}. \qed
\end{example}



\setlength{\unitlength}{1mm} \psset{unit=1mm}
\begin{figure}[th]
\begin{center}
\begin{picture}(135,70)

\psline{->}(20,20)(70,20)
\psline(20,35)(68,35)
\psline(20,50)(68,50)
\psline(20,65)(68,65)
\psline{->}(20,20)(20,70)
\psline(35,20)(35,68)
\psline(50,20)(50,68)
\psline(65,20)(65,68)

\put(12,16){\footnotesize{$(0,0)$}}
\put(31.5,16){\footnotesize{$(1,0)$}}
\put(46.5,16){\footnotesize{$(2,0)$}}
\put(61.5,16){\footnotesize{$(3,0)$}}

\put(25,21){\footnotesize{$\alpha_{(0,0)}$}}
\put(40,21){\footnotesize{$\alpha_{(1,0)}$}}
\put(55,21){\footnotesize{$\alpha_{(2,0)}$}}
\put(66,21){\footnotesize{$\cdots$}}

\put(25,36){\footnotesize{$\alpha_{(0,1)}$}}
\put(40,36){\footnotesize{$\alpha_{(1,1)}$}}
\put(55,36){\footnotesize{$\alpha_{(2,1)}$}}
\put(66,36){\footnotesize{$\cdots$}}

\put(25,51){\footnotesize{$\alpha_{(0,2)}$}}
\put(40,51){\footnotesize{$\alpha_{(1,2)}$}}
\put(55,51){\footnotesize{$\alpha_{(2,2)}$}}
\put(66,51){\footnotesize{$\cdots$}}

\put(26,66){\footnotesize{$\cdots$}}
\put(41,66){\footnotesize{$\cdots$}}
\put(56,66){\footnotesize{$\cdots$}}
\put(66,66){\footnotesize{$\cdots$}}

\psline{->}(35,14)(50,14)
\put(42,10){$\rm{T}_1$}
\psline{->}(10,35)(10,50)
\put(4,42){$\rm{T}_2$}

\put(11,34){\footnotesize{$(0,1)$}}
\put(11,49){\footnotesize{$(0,2)$}}
\put(11,64){\footnotesize{$(0,3)$}}

\put(20,26){\footnotesize{$\beta_{(0,0)}$}}
\put(20,41){\footnotesize{$\beta_{(0,1)}$}}
\put(20,56){\footnotesize{$\beta_{(0,2)}$}}
\put(21,66){\footnotesize{$\vdots$}}

\put(35,26){\footnotesize{$\beta_{(1,0)}$}}
\put(35,41){\footnotesize{$\beta_{(1,1)}$}}
\put(35,56){\footnotesize{$\beta_{(1,2)}$}}
\put(36,66){\footnotesize{$\vdots$}}

\put(50,26){\footnotesize{$\beta_{(2,0)}$}}
\put(50,41){\footnotesize{$\beta_{(2,1)}$}}
\put(50,56){\footnotesize{$\beta_{(2,2)}$}}
\put(51,66){\footnotesize{$\vdots$}}

\put(10,6){(i)}


\put(85,8){(ii)}

\psline{->}(95,14)(110,14)
\put(102,10){$\rm{T}_1$}
\psline{->}(77,35)(77,50)
\put(72,42){$\rm{T}_2$}

\psline{->}(80,20)(130,20)
\psline(80,35)(128,35)
\psline(80,50)(128,50)
\psline(80,65)(128,65)

\psline{->}(80,20)(80,70)
\psline(95,20)(95,68)
\psline(110,20)(110,68)
\psline(125,20)(125,68)

\put(75,16){\footnotesize{$(0,0)$}}
\put(91,16){\footnotesize{$(1,0)$}}
\put(106,16){\footnotesize{$(2,0)$}}
\put(121,16){\footnotesize{$(3,0)$}}

\put(85,21){\footnotesize{$\omega_{0}$}}
\put(100,21){\footnotesize{$\omega_{1}$}}
\put(115,21){\footnotesize{$\omega_{2}$}}
\put(126,21){\footnotesize{$\cdots$}}

\put(85,36){\footnotesize{$\omega_{1}$}}
\put(100,36){\footnotesize{$\omega_{2}$}}
\put(115,36){\footnotesize{$\omega_{3}$}}
\put(126,36){\footnotesize{$\cdots$}}

\put(85,51){\footnotesize{$\omega_{2}$}}
\put(100,51){\footnotesize{$\omega_{3}$}}
\put(115,51){\footnotesize{$\omega_{4}$}}
\put(126,51){\footnotesize{$\cdots$}}

\put(85,66){\footnotesize{$\cdots$}}
\put(100,66){\footnotesize{$\cdots$}}
\put(115,66){\footnotesize{$\cdots$}}
\put(126,66){\footnotesize{$\cdots$}}

\put(80,26){\footnotesize{$\omega_{0}$}}
\put(80,41){\footnotesize{$\omega_{1}$}}
\put(80,56){\footnotesize{$\omega_{2}$}}
\put(81,66){\footnotesize{$\vdots$}}

\put(95,26){\footnotesize{$\omega_{1}$}}
\put(95,41){\footnotesize{$\omega_{2}$}}
\put(95,56){\footnotesize{$\omega_{3}$}}
\put(96,66){\footnotesize{$\vdots$}}

\put(110,26){\footnotesize{$\omega_{2}$}}
\put(110,41){\footnotesize{$\omega_{3}$}}
\put(110,56){\footnotesize{$\omega_{4}$}}
\put(111,66){\footnotesize{$\vdots$}}
\end{picture}
\end{center}
\caption{(i) \ Weight diagram of a generic $2$-variable weighted shift; (ii) \ weight
diagram of a generic classical embedding $W_{\theta(\omega)} \equiv \mathbf{(}T_{1},T_{2})$.}
\label{Fig 1}
\end{figure}
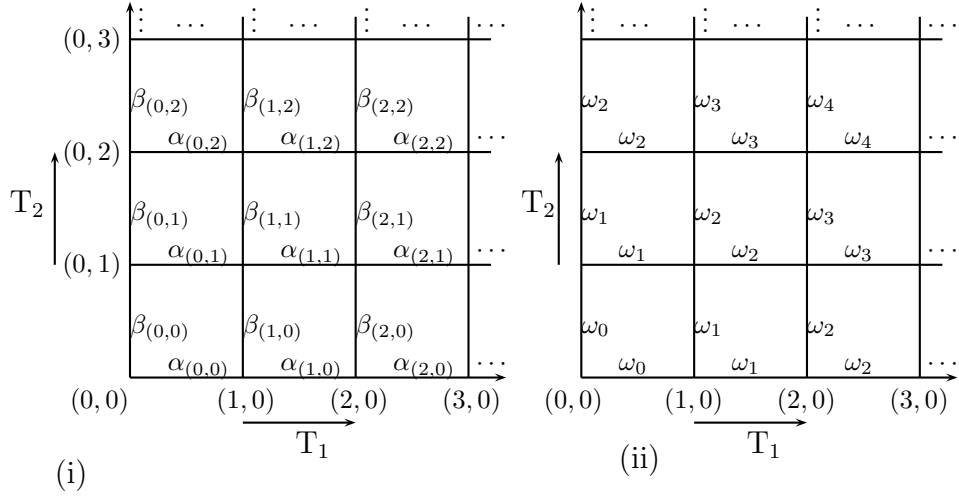


\begin{example}
\label{ex104} For $x \in [\frac{1}{2},\frac{2}{3}]$, let $\omega(x):=\shift(\sqrt{\frac{1}{2}},\sqrt{\frac{1}{2}},\sqrt{\frac{1}{2}},
\sqrt{x},\sqrt{\frac{2}{3}},\sqrt{\frac{3}{4}},\sqrt{\frac{4}{5}},\cdots )$. Then
\begin{itemize}
\item[(i)] \ $W_{\theta(\omega(x))}$ is hyponormal, but $W_{\theta(\omega(x))}^{\left( 2,3\right) }$ is not hyponormal.
\item[(ii)] \ $W_{\theta(\omega(x))}$ is not $2$-hyponormal.
\item[(iii)] \ $\left.W_{\theta(\omega(x))} \right|_{\mathcal{M}\cap \mathcal{N}}$
and $[\left.W_{\theta(\omega(x))} \right|_{\mathcal{M}\cap \mathcal{N}}]^{\left( 2,2\right) }$ are {\it not} $2$-hyponormal. \newline 
\item[(iv)] \ $[\left.W_{\theta(\omega(x))} \right|_{\mathcal{M}\cap \mathcal{N}}]^{\left( 3,3\right) }$ and $[\left.W_{\theta(\omega(x))} \right|_{\mathcal{M}\cap \mathcal{N}}]^{\left( 4,4\right) }$ are $2$-hyponormal.

\end{itemize}

\medskip
The statements (i)--(iv) are established by direct calculations using (\ref{decomposition}) and Lemma \ref{khypo}. \qed
\end{example}

We recall the following open question in \cite[Problem 1.1]{CuYo3}: \ Let $T$
be an operator and let $k\geq 2$. \ Does the $k$-hyponormality of $T$ imply
the $k$-hyponormality of $T^{2}$?

\begin{remark}
\label{Re1} \ (i) \ Consider the classical embedding $W_{\theta(\omega)}$, as in Example \ref{ex103}. \ By Lemma \ref{Lem 2}, we know that $%
W_{\omega }$ is $k$-hyponormal if and only if $W_{\theta(\omega)}$ is (jointly) $k$-hyponormal. \ Moreover, we might expect that Example \ref{ex103}(vi) will provide some clues on the solution of the above mentioned open problem. \newline

(ii) \ For $p,q \ge 1$, let $\mathcal{M}_q:=\vee \{e_{(k_1,k_2)}:k_1 \ge 0 \textrm{ and } k_2 \ge q \}$ and 
$\mathcal{N}_p:=\vee \{e_{(k_1,k_2)}:k_1 \ge p \textrm{ and } k_2 \ge 0 \}$. \ The reader may have noticed that for the classical embedding $W_{\theta(\omega)}$, the restriction 
$\left.W_{\theta(\omega(x))} \right|_{\mathcal{M}_q \cap \mathcal{N}_p}$ is unitarily equivalent to the restriction 
$\left.W_{\theta(\omega(x))} \right|_{\mathcal{M}_{p+q}}$ and also to the restriction 
$\left.W_{\theta(\omega(x))} \right|_{\mathcal{N}_{p+q}}$. \ This strengthens the role of the classical embedding as 
true one-parametric representation of $W_{\omega}$ inside a $2$-variable weighted shift. \newline

(iii) \ For a general operator $T$ on Hilbert space, it is well known that
the hyponormality of $T$ does not imply the hyponormality of $T^{2}$ \cite[Problem 209]{Hal}. \ However, for a unilateral weighted shift $W_{\omega }$, the
hyponormality of $W_{\omega }$ (detected by the condition $\omega _{k}\leq
\omega _{k+1}$ for all $k\geq 0$) clearly implies the hyponormality of every
power $W_{\omega }^{m}\;(m\geq 1)$. $\ $For $2$-variable weighted shifts,
one is thus tempted to expect that a similar result would hold, especially
if we restrict attention to the class of classical embeddings. \
Somewhat surprisingly, it is actually possible to build a hyponormal $2$-variable
weighted shift $W_{\theta(\omega)}$ such
that $W_{\theta(\omega)}^{\left( 2,3\right)}$ is not hyponormal, as shown in Example \ref{ex104}(i).\newline

(iv) \ It is not hard to prove that the classical embedding $\Theta$ of $W_{\omega}^m$ (thought of as the orthogonal direct sum of the classical embeddings of each summand in (\ref{orthosum})) is unitarily equivalent to the power $(m,1)$ or $(1,m)$ of the classical embedding of 
$W_{\omega}$; that is, 
$$
\Theta(W_{\omega}^m):=\bigoplus_{i=0}^{m-1} \; \Theta(W_{\omega \left( m:i \right) }) \cong W_{\theta(\omega)}^{(m,1)}\cong W_{\theta(\omega)}^{(1,m)}.
$$ 

Thus, when it comes to detecting the behavior of positive integer powers of a unilateral weighted shift, to a great extent we can rely on the information provided by the powers $(m,1)$ or $(1,m)$ of its classical embedding. \newline

(v) \ Consider the $2$-variable weighted shift\textit{\ }$%
W_{\left( \alpha ,\beta \right) }\equiv W_{\theta(\omega)}$,
as in Example \ref{ex104}. \ By (iv) above, we observe that $\left.W_{\left( \alpha ,\beta \right)
}\right|_{\mathcal{M}\cap \mathcal{N}}$ and $\left.\left( W_{\left( \alpha ,\beta
\right) }\right|_{\mathcal{M}\cap \mathcal{N}}\right) ^{\left( 3,3\right) }$ are
both also classical embeddings. \ By Lemma \ref{Lem 2}, we know that $\left.W_{\omega }\right|_{\mathcal{L}_{1}}$ is $k$-hyponormal if and only if $\left.W_{\theta(\omega)}\right|_{\mathcal{M}_{1}}$ is (jointly) $k$-hyponormal (all $k\geq 1$), where $\mathcal{L}_{1}$ denotes the subspace of $\ell^2(\mathbb{Z}_+)$ generated by the orthonormal basis vectors $e_1,e_2,\cdots$. \ Example \ref{ex103}(v) provides a clear signal that it might be possible to find an operator $T$ such that the $2$-hyponormality of $T^{2}$ and $T^{3}$ does not imply the $2$-hyponormality of $T$.
\end{remark}



\section{Application 2: The Case of the Bergman Shift} \label{Bergman}

For an answer to Problem \ref{quest}, we recall K. Zhu's lemma. \ For $n\geq 2$%
, let $\mathbb{S}_{n}$ (resp. $\mathbb{B}_{n}$) be unit sphere (resp. ball)
in $\mathbb{C}^{n-1}$. \ We let $dv$ denote the normalized volume measure on $\mathbb{B}%
_{n}$, so that $v\left( \mathbb{B}_{n}\right) =1$. \ The normalized surface
measure on $\mathbb{S}_{n}$ will be denoted by $d\sigma $. 

\begin{lemma}
\label{Lem 1} \cite{Zhu} For $n\geq 2$, let $\mathbb{S}_{n}$ (resp. $\mathbb{%
B}_{n}$) be unit sphere (resp. ball) in $\mathbb{C}^{n-1}$, $\mathbf{m}%
=\left( m_{1},m_{2},\cdots ,m_{n}\right) \in \mathbb{Z}_{+}^{n}$, and $%
\alpha >-1$. \ Let $\mathbf{m!=}m_{1}!m_{2}!\cdots m_{n}!$, $\left\vert
\mathbf{m}\right\vert =m_{1}+m_{2}+\cdots +m_{n}$, and $\Gamma $ be the
gamma function. \ Then
\begin{equation}
\int_{\mathbb{S}_{n}}\left\vert \rho ^{\mathbf{m}}\right\vert ^{2}d\sigma
\left( \rho \right)=\frac{\left( n-1\right) !\mathbf{m!}}{\left(
n-1+\left\vert \mathbf{m}\right\vert \right) !}
\end{equation} \label{Zhu}
and
\begin{equation*}
\int_{\mathbb{B}_{n}}\left\vert z^{\mathbf{m}}\right\vert ^{2}dv_{\alpha
}\left( z\right) =\frac{\mathbf{m!}\Gamma \left( n+\alpha +1\right) }{\Gamma
\left( n+\left\vert \mathbf{m}\right\vert +\alpha +1\right) },
\end{equation*}
where $dv_{\alpha }\left( z\right) =c_{\alpha }\left( 1-\left\vert
z\right\vert ^{2}\right) ^{\alpha }dv\left( z\right) $ and $c_{\alpha }$ is
the a normalizing constant so that $v_{\alpha }\left( \mathbb{B}_{n}\right)
=1 $ and $c_{\alpha }=\frac{\Gamma \left( n+\alpha +1\right) }{n!\Gamma
\left( \alpha +1\right) }$.
\end{lemma}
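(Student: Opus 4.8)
The plan is to prove both identities at once by reducing each integral to a one-variable radial integral, with the single-variable normalization supplied by the classical Gaussian integral. I would work in $\mathbb{C}^n$, so that the multi-index $\mathbf{m}=(m_1,\dots,m_n)$ labels the coordinates, and write $z=r\zeta$ with $r=|z|\ge 0$ and $\zeta\in\mathbb{S}_n$. The first thing to record is the polar decomposition of Lebesgue measure: if $d\nu$ denotes ordinary $2n$-real-dimensional Lebesgue measure on $\mathbb{C}^n\cong\mathbb{R}^{2n}$ and $d\sigma$ is the (unique $U(n)$-invariant) normalized surface measure on $\mathbb{S}_n$, then $d\nu(z)=c\,r^{2n-1}\,dr\,d\sigma(\zeta)$, where the constant $c$ is forced by $\nu(\mathbb{B}_n)=\pi^n/n!$ to equal $2\pi^n/(n-1)!$; correspondingly, the normalized volume measure $dv$ on $\mathbb{B}_n$ satisfies $dv(z)=2n\,r^{2n-1}\,dr\,d\sigma(\zeta)$.

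For the sphere formula, I would evaluate $J:=\int_{\mathbb{C}^n}|z^{\mathbf{m}}|^2 e^{-|z|^2}\,d\nu(z)$ in two ways. Directly, the integrand factors over the coordinates, and $\int_{\mathbb{C}}|w|^{2m_j}e^{-|w|^2}\,dA(w)=\pi\,m_j!$ (planar polar coordinates plus the Gamma integral), so $J=\pi^n\,\mathbf{m}!$. Via the polar decomposition, using $|z^{\mathbf{m}}|^2=r^{2|\mathbf{m}|}|\zeta^{\mathbf{m}}|^2$, one gets $J=\left(c\int_0^{\infty}r^{2|\mathbf{m}|+2n-1}e^{-r^2}\,dr\right)\int_{\mathbb{S}_n}|\zeta^{\mathbf{m}}|^2\,d\sigma(\zeta)$, and the radial factor is $\tfrac{c}{2}\Gamma(|\mathbf{m}|+n)=\tfrac{\pi^n}{(n-1)!}(|\mathbf{m}|+n-1)!$. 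Equating the two expressions for $J$ and solving yields $\int_{\mathbb{S}_n}|\zeta^{\mathbf{m}}|^2\,d\sigma(\zeta)=\tfrac{(n-1)!\,\mathbf{m}!}{(n-1+|\mathbf{m}|)!}$.

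For the weighted-ball formula, I would again pass to polar coordinates, obtaining
\[
\int_{\mathbb{B}_n}|z^{\mathbf{m}}|^2(1-|z|^2)^{\alpha}\,dv(z)=\left(2n\int_0^1 r^{2|\mathbf{m}|+2n-1}(1-r^2)^{\alpha}\,dr\right)\int_{\mathbb{S}_n}|\zeta^{\mathbf{m}}|^2\,d\sigma(\zeta).
\]
The substitution $u=r^2$ turns the radial integral into $n\,B(|\mathbf{m}|+n,\alpha+1)=n\,\tfrac{\Gamma(|\mathbf{m}|+n)\,\Gamma(\alpha+1)}{\Gamma(|\mathbf{m}|+n+\alpha+1)}$. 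Multiplying by the sphere integral just established, using $n\,\Gamma(n)=n!$ and cancelling the factor $\Gamma(|\mathbf{m}|+n)$, and then multiplying by the normalizing constant $c_\alpha=\tfrac{\Gamma(n+\alpha+1)}{n!\,\Gamma(\alpha+1)}$ (which cancels the remaining $n!$ and $\Gamma(\alpha+1)$), collapses everything to $\tfrac{\mathbf{m}!\,\Gamma(n+\alpha+1)}{\Gamma(n+|\mathbf{m}|+\alpha+1)}$, which is the asserted value.

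There is essentially no analytic difficulty here: every integral in sight is absolutely convergent, so Tonelli's theorem justifies the factorization of $J$ over the coordinates and all interchanges of the radial and spherical integrations. The one thing that genuinely needs care is bookkeeping — keeping the several normalizing constants ($c$, the factor $2n$, $\pi^n/n!$, and $c_\alpha$) consistent so that they cancel exactly as advertised — and I expect this to be the main (and only) place where a slip could occur.
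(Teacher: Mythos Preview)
Your argument is correct and is essentially the standard proof of these identities (it is, in fact, the argument given in Zhu's book, which the paper cites). The paper itself does not prove this lemma at all: it is quoted verbatim from \cite{Zhu} as a known result and used as a black box in the proof of Theorem~\ref{thm62}. So there is no ``paper's own proof'' to compare against; you have simply supplied the omitted details, and done so cleanly. One minor remark: you were right to work in $\mathbb{C}^n$ rather than $\mathbb{C}^{n-1}$ --- the ``$\mathbb{C}^{n-1}$'' in the statement is a typographical slip in the paper, since the multi-index $\mathbf{m}$ has $n$ components and the formulas match the $\mathbb{C}^n$ normalization.
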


\begin{theorem} \label{thm62}
(cf. Problem \ref{quest}) \ The Berger measure of $\mathcal{SIE}(B_+)$ is normalized arclength on the line segment $L:=\{(r,1-r): 0 \le r \le 1 \}$, and $\mathcal{SIE}(B_+)$ is unitarily equivalent to the commuting pair $(M_{z_1},M_{z_2})$ of multiplication operators on the Hardy space of the unit sphere $\mathbb{S}_3$. 
\end{theorem}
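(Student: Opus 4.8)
The plan is to reduce the theorem to two Berger-measure identifications together with the standard functional-model description of the Hardy space of the sphere. Recall first (from the discussion following Problem \ref{Q1A}, and from Theorem \ref{thm51}) that the construction of Subsection \ref{constr} applied to $B_+$ -- which is exactly the shift displayed in Figure \ref{Fig 4} and which is the object of Problem \ref{quest} -- coincides with the $(p,q)$-embedding $\mathcal{PE}(B_+;f)$ for $f(r)=(p(r),q(r))=(r,1-r)$ on $I=[0,1]$, $c=\|B_+\|^2=1$; this is precisely $\mathcal{SIE}(B_+)$.

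For the first assertion, note that $B_+$ is subnormal with Berger measure $\sigma$ equal to Lebesgue measure on $[0,1]$. By Theorem \ref{Th4A}, $\mathcal{SIE}(B_+)$ is subnormal with Berger measure $\mu=\sigma\circ f^{-1}$, i.e. $\mu$ is the push-forward of Lebesgue measure on $[0,1]$ under $r\mapsto(r,1-r)$. Since that map parametrizes $L=\{(s,t)\in\mathbb{R}_+^2:s+t=1\}$ with constant speed $\sqrt 2$, and the total length of $L$ is also $\sqrt 2$, this push-forward is exactly normalized arclength on $L$; alternatively, this is Theorem \ref{arclengthA} (equivalently Theorem \ref{thmBergman}) applied to the pair $(r,1-r)$. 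In particular $\supp\;\mu=L$ and $\gamma_{(k_1,k_2)}[\mu]=\int_0^1 r^{k_1}(1-r)^{k_2}\,dr=B(k_1+1,k_2+1)=\frac{k_1!\,k_2!}{(k_1+k_2+1)!}$.

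For the second assertion, the plan is to show that $(M_{z_1},M_{z_2})$ on $H^2(\mathbb{S}_3)$ has this same Berger measure. First, this pair is itself a commuting $2$-variable weighted shift: the monomials $\{z_1^{k_1}z_2^{k_2}\}_{(k_1,k_2)\in\mathbb{Z}_+^2}$ are dense in $H^2(\mathbb{S}_3)$ and mutually orthogonal there (the surface measure $d\sigma$ on $\mathbb{S}_3\subseteq\mathbb{C}^2$ is invariant under the torus action), so in the orthonormal basis $e_{(k_1,k_2)}:=z_1^{k_1}z_2^{k_2}/\|z_1^{k_1}z_2^{k_2}\|$ each $M_{z_j}$ is a weighted forward shift. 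Next, $(M_{z_1},M_{z_2})$ is (jointly) subnormal, being the restriction to the invariant subspace $H^2(\mathbb{S}_3)$ of the normal pair of coordinate multiplication operators on $L^2(\mathbb{S}_3,d\sigma)$. Its moments are $\gamma_{(k_1,k_2)}=\|z_1^{k_1}z_2^{k_2}\|^2=\int_{\mathbb{S}_3}|z_1|^{2k_1}|z_2|^{2k_2}\,d\sigma$, which by Lemma \ref{Lem 1} equals $\frac{k_1!\,k_2!}{(k_1+k_2+1)!}$; equivalently, the push-forward of $d\sigma$ under $(z_1,z_2)\mapsto(|z_1|^2,|z_2|^2)$ -- the Berger measure of $(M_{z_1},M_{z_2})$ -- is uniform on $L$, i.e. normalized arclength on $L$.

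Finally, invoke the fact that a (jointly) subnormal commuting $2$-variable weighted shift is determined up to unitary equivalence by its Berger measure: the measure fixes all moments $\gamma_{(k_1,k_2)}$, hence the weights $\alpha_{(k_1,k_2)}^2=\gamma_{(k_1+1,k_2)}/\gamma_{(k_1,k_2)}$ and $\beta_{(k_1,k_2)}^2=\gamma_{(k_1,k_2+1)}/\gamma_{(k_1,k_2)}$, and a $2$-variable weighted shift is determined up to unitary equivalence by its weight sequences. Since $\mathcal{SIE}(B_+)$ and $(M_{z_1},M_{z_2})$ on $H^2(\mathbb{S}_3)$ have been shown to have the same Berger measure (normalized arclength on $L$), they are unitarily equivalent, as claimed. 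There is no serious obstacle: the whole argument rests on Theorems \ref{Th4A} and \ref{arclengthA} on the embedding side, and on Zhu's Lemma \ref{Lem 1} (equivalently, the classical fact that the radial projection of normalized surface measure on $\mathbb{S}_3$ onto the segment $\{s+t=1\}$ is uniform) on the Hardy-space side. The one point requiring care is normalization -- checking that the constant-speed $\sqrt 2$ parametrization of $L$ reconciles the two notions of arclength measure, and that $d\sigma$ is normalized so that $\gamma_{(0,0)}=\|1\|^2=1$ on both sides.
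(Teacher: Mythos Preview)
Your proof is correct and follows essentially the same route as the paper: compute the moments of $\mathcal{SIE}(B_+)$ as $\frac{k_1!\,k_2!}{(k_1+k_2+1)!}$, match them with Zhu's formula (Lemma \ref{Lem 1}) to identify the shift with $(M_{z_1},M_{z_2})$ on $H^2(\mathbb{S}_3)$, and invoke Theorem \ref{arclengthA} (or the direct push-forward argument) for the arclength description of the Berger measure. Your write-up is somewhat more explicit than the paper's in justifying why $(M_{z_1},M_{z_2})$ is a $2$-variable weighted shift and why equal Berger measures force unitary equivalence, but the underlying argument is the same.
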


\begin{proof}
From Figure \ref{Fig 4} we easily see that
$$
\gamma _{(k_{1},k_{2})}(\mathcal{SIE}(B_+))=\frac{%
k_{1}!k_{2}!}{\left( k_{1}+k_{2}+1\right) !}.
$$
On the other hand, these moments agree with those in (\ref{Zhu}), when $n=2$. \ It follows that the weight diagram in Figure \ref{Fig 4} corresponds to the commuting pair $(M_{z_1},M_{z_2})$ of multiplication operators on the Hardy space of the unit sphere $\mathbb{S}_3$. \ Now recall from Theorem \ref{arclengthA} that the Berger measure associated with the weight diagram in Figure \ref{Fig 4} is normalized arclength in the line segment $L$. \ As a consequence, we observe that 
\begin{eqnarray*}
\int_{\mathbb{S}_{n}}\left\vert \rho ^{\mathbf{m}}\right\vert ^{2}d\sigma
\left( \rho \right)=\frac{\mathbf{m!}}{(\left|\mathbf{m}\right|+1)!}=\int_{[0,1]}r^{m_1}(1-r)^{m_2}dr.
\end{eqnarray*}
\end{proof}

{\bf An Alternative Calculation of the Berger Measure of the Agler Shifts}. \ We conclude this section with finding an alternative approach to the calculation of the Berger measure of the Agler $j$-th
shift $A_{j}$ ($j\geq 2$). \ For this, we recall that each component $T_{i}$
of a subnormal $2$-variable weighted shift $W_{\left( \alpha ,\beta \right)
}\equiv (T_{1},T_{2})$ is subnormal. \ For instance, $T_{1}\cong
\bigoplus_{k_{2}=0}^{\infty }W_{k_{2}}$ (resp. $T_{2}\cong
\bigoplus_{k_{1}=0}^{\infty }V_{k_{1}}$). \ In this case, we let $\{\xi
_{k_{2}}\}_{k_{2}=0}^{\infty }$ and $\{\eta _{k_{1}}\}_{k_{1}=0}^{\infty }$
be the $1$-variable Berger measures for $%
\{W_{k_{2}}\}_{k_{2}=0}^{\infty }$ and $\{V_{k_{1}}\}_{k_{1}=0}^{\infty }$,
respectively. \ Fix $k_{2}\geq 0$ and observe that the moments of $\xi
_{k_{2}}$ are given by
\begin{eqnarray}
\int_{[0,\left\|T_1\right\|]}s^{k_{1}}\;d\xi_{k_{2}}(s)&=&\alpha _{0,k_{2}}^{2}\cdot \ldots \cdot
\alpha _{k_{1}-1,k_{2}}^{2}=\frac{\gamma _{(k_{1},k_{2})}}{\gamma _{(0,k_{2})}} \nonumber \\
&=&\frac{1}{\gamma _{(0,k_{2})}}\int_{[0,\left\|T_1\right\|] \times [0,\left\|T_2\right\|]}s^{k_{1}}t^{k_{2}}\;d\mu (s,t)\;\;(\text{%
all }k_{1}\geq 0).  \label{oldeq}
\end{eqnarray}
We also know that if $\mu$ is the Berger measure of $W_{(\alpha,\beta)}$, then $\xi_{k_{2}}=\mu _{k_{2}}^{X}$%
, where $d\mu _{k_{2}}(s,t)=\frac{1}{\gamma _{0k_{2}}}t^{k_{2}}d\mu (s,t)$ (%
\cite[Theorem 3.1]{CuYo2}). \ 

Recall now the Drury-Arveson $2$-variable weighted
shift $DA$, whose weight sequences are given by

\begin{eqnarray}
\alpha _{(k_{1},k_{2})}:= &&\sqrt{\frac{k_{1}+1}{k_{1}+k_{2}+1}}\;\;(\text{%
for all}\;k_{1},k_{2}\geq 0)\medskip  \label{alpha} \\
\beta _{(k_{1},k_{2})}:= &&\alpha _{(k_{2},k_{1})}\;\;(\text{for all}%
\;k_{1},k_{2}\geq 0).  \label{beta}
\end{eqnarray}

If we denote the successive rows of the weight diagram of $DA$ by $%
R_{0}, R_{1},R_{2},\cdots $, it is easy to see that $R_{0}=A_{1}$, the (unweighted)
unilateral shift, $R_{1}=A_{2}\equiv B_+$, the Bergman shift and, more generally, $%
R_{k_{2}-1}=A_{k_{2}}$, the Agler $k_{2}$-th shift ($k_{2}\geq 2$); in
particular, all rows and columns are subnormal weighted shifts. \ For $%
k_{2}\geq 2$, it is well known that the Berger measure of $A_{k_{2}}$ is $%
(k_{2}-1)(1-r)^{k_{2}-2}dr$ on the closed interval $[0,1]$ (cf. \cite[paragraph immediately preceding Theorem 1.1]%
{CuEx}). \ However, $DA$ is {\it not} subnormal (not even hyponormal!), so we may not directly use the above argument. \ On the other hand, $DA$ is a close relative of the spherically isometric embedding $\mathcal{SIE}(A_2)$ of the Bergman shift. \ That is, each row $R_j \; (j \ge 1)$ in $DA$ appears as row $R_{j-1}$ in $\mathcal{SIE}(A_2)$. \ We thus obtain an alternative way to calculate the Berger measure of the Agler shifts.

\begin{corollary} \label{cor63}
For $k_{2}\geq 2$, the Berger measure of $A_{k_{2}}$ is $%
(k_{2}-1)(1-r)^{k_{2}-2}dr $ on $[0,1]$.
\end{corollary}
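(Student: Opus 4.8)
The plan is to derive Corollary \ref{cor63} directly from Theorem \ref{thm62} together with the row-disintegration machinery recorded in the displays (\ref{oldeq}) and the identity $\xi_{k_2}=\mu_{k_2}^X$ with $d\mu_{k_2}(s,t)=\frac{1}{\gamma_{(0,k_2)}}t^{k_2}d\mu(s,t)$. First I would recall from Problem \ref{quest} and Figure \ref{Fig 4} that the rows of the spherically isometric embedding $\mathcal{SIE}(B_+)$ are exactly the Agler shifts $A_{j+2}$, so that for each fixed $k_2\ge 0$ the row weighted shift $W_{k_2}$ (the shift obtained by reading off the horizontal weights $\alpha_{(\cdot,k_2)}$) is unitarily equivalent to $A_{k_2+2}$; equivalently, setting $j:=k_2+2$, the Agler $j$-th shift appears as the $(j-2)$-nd row.

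Next I would invoke Theorem \ref{thm62}, which identifies the Berger measure $\mu$ of $\mathcal{SIE}(B_+)$ as normalized arclength on the segment $L=\{(r,1-r):0\le r\le 1\}$; concretely, for a continuous $h$ on $L$ one has $\int h\,d\mu=\int_{[0,1]}h(r,1-r)\,dr$. Then the Berger measure $\xi_{k_2}$ of the $k_2$-th row is, by the disintegration formula, the $X$-marginal of $\frac{1}{\gamma_{(0,k_2)}}t^{k_2}d\mu(s,t)$. Pushing this through the parametrization $t=1-s$ gives, for all $k_1\ge 0$,
\begin{equation*}
\int s^{k_1}\,d\xi_{k_2}(s)=\frac{1}{\gamma_{(0,k_2)}}\int_{[0,1]}s^{k_1}(1-s)^{k_2}\,ds,
\end{equation*}
and since $\gamma_{(0,k_2)}=\int_{[0,1]}(1-s)^{k_2}\,ds=\frac{1}{k_2+1}$ we obtain $d\xi_{k_2}(s)=(k_2+1)(1-s)^{k_2}\,ds$ on $[0,1]$. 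Rewriting with $j=k_2+2$, i.e. $k_2=j-2$, this reads $(j-1)(1-s)^{j-2}\,ds$, which is precisely the asserted Berger measure of $A_j$ for $j\ge 2$ (the case $j=2$, $B_+$, reproducing Lebesgue measure).

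The only subtlety — and the step I expect to require a word of care rather than real difficulty — is the passage from the abstract push-forward/marginal statement to the explicit Beta-integral identification of $\xi_{k_2}$. Since $\mu$ is supported on the graph of the injective map $r\mapsto(r,1-r)$, disintegration over the $X$-fibers is trivial (each fiber is a single point), so $\mu_{k_2}^X$ is simply the image of the weighted measure $\frac{1}{\gamma_{(0,k_2)}}t^{k_2}d\mu$ under the projection $(s,t)\mapsto s$; equating moments as above then pins down $\xi_{k_2}$ uniquely because its support lies in the compact interval $[0,1]$ and the moment problem on a compact interval is determinate. One should also double-check the normalization constant $\gamma_{(0,k_2)}$ against Figure \ref{Fig 4} (it is the product of the squares of the vertical weights in the $0$-th column up to height $k_2$, which telescopes to $\frac{1}{k_2+1}$), after which the computation closes. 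Alternatively, one may bypass marginals entirely and simply verify that $(k_2+1)(1-s)^{k_2}ds$ has the moments $\alpha_{(0,k_2)}^2\cdots\alpha_{(k_1-1,k_2)}^2$ read off from the weight diagram of $A_{k_2+2}$, which is an immediate Beta-function computation.
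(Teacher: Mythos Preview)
Your proposal is correct and follows essentially the same route as the paper: identify the rows of $\mathcal{SIE}(B_+)$ with the Agler shifts $A_{k_2+2}$, use the row-disintegration formula $\xi_{k_2}=\mu_{k_2}^X$ with $d\mu_{k_2}=\frac{1}{\gamma_{(0,k_2)}}t^{k_2}d\mu$, and read off $(k_2+1)(1-r)^{k_2}\,dr$ via the parametrization $t=1-r$. The only noteworthy difference is in the input you invoke: you cite Theorem~\ref{thm62} (normalized arclength on $L$, which rests on Zhu's lemma), whereas the paper deliberately appeals instead to the pushforward formula of Theorem~\ref{Th4} together with Corollary~\ref{newcor}, precisely so that the argument bypasses Zhu's result (see the remark following the corollary); the computations are otherwise identical.
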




\setlength{\unitlength}{1mm} \psset{unit=1mm}
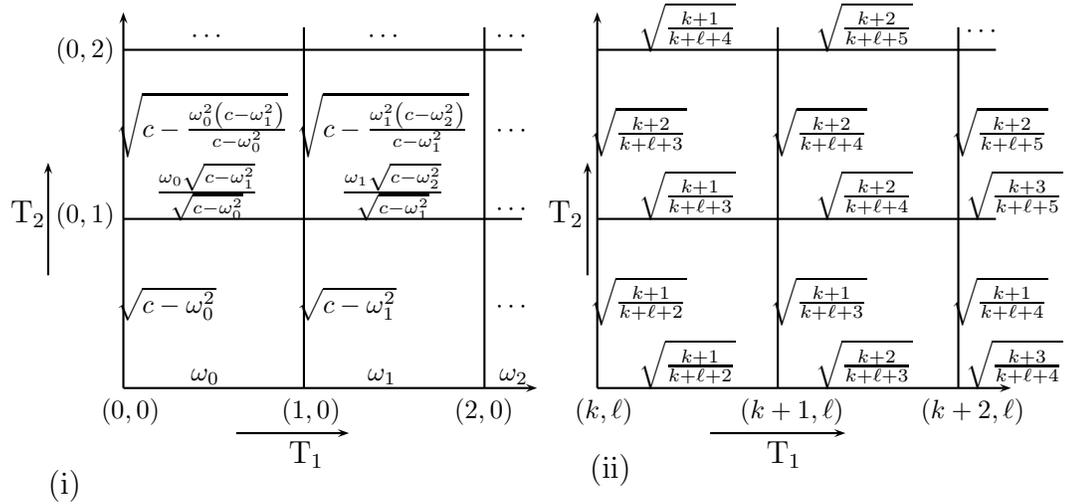
\begin{figure}[th]
\begin{center}
\begin{picture}(135,70)

\put(6,42){\footnotesize{$(0,1)$}}
\put(6,64){\footnotesize{$(0,2)$}}

\psline{->}(30,14)(45,14)
\put(37,10){$\rm{T}_1$}
\psline{->}(5,35)(5,50)
\put(0,42){$\rm{T}_2$}

\psline{->}(15,20)(70,20)
\psline(15,42.5)(68,42.5)
\psline(15,65)(68,65)

\psline{->}(15,20)(15,70)
\psline(39,20)(39,68)
\psline(63,20)(63,68)

\put(12,16){\footnotesize{$(0,0)$}}
\put(36,16){\footnotesize{$(1,0)$}}
\put(59,16){\footnotesize{$(2,0)$}}

\put(24,21){\footnotesize{$\omega_{0}$}}
\put(47.5,21){\footnotesize{$\omega_{1}$}}
\put(65,21){\footnotesize{$\omega_{2}$}}

\put(19.3,45.2){\footnotesize{$\frac{\omega _{0}\sqrt{c-\omega_{1}^2}}{\sqrt{c-\omega_{0}^2}}$}}
\put(44,45.2){\footnotesize{$\frac{\omega _{1}\sqrt{c-\omega_{2}^2}}{\sqrt{c-\omega_{1}^2}}$}}
\put(64.5,43){\footnotesize{$\cdots$}}

\put(14,30){\footnotesize{$\sqrt{c-\omega _{0}^2}$}}
\put(38,30){\footnotesize{$\sqrt{c-\omega _{1}^2}$}}
\put(64.5,30){\footnotesize{$\cdots$}}

\put(14,53.5){\footnotesize{$\sqrt{c-\frac{\omega _{0}^{2}\left( c-\omega_{1}^2\right) }{c-\omega_{0}^2}}$}}
\put(38,53.5){\footnotesize{$\sqrt{c-\frac{\omega _{1}^{2}\left( c-\omega_{2}^2\right) }{c-\omega_{1}^2}}$}}
\put(64.5,53.5){\footnotesize{$\cdots$}}

\put(24,66){\footnotesize{$\cdots$}}
\put(47.5,66){\footnotesize{$\cdots$}}
\put(64.5,66){\footnotesize{$\cdots$}}

\put(5,6){(i)}


\put(77,8){(ii)}

\psline{->}(93,14)(111,14)
\put(100.5,10){$\rm{T}_{1}$}
\psline{->}(76.7,35)(76.7,50)
\put(71.5,42){$\rm{T}_{2}$}

\psline{->}(78,20)(133,20)
\psline(78,42.5)(131,42.5)
\psline(78,65)(131,65)

\psline{->}(78,20)(78,70)
\psline(102,20)(102,68)
\psline(126,20)(126,68)

\put(75,16){\footnotesize{$(k,\ell)$}}
\put(97,16){\footnotesize{$(k+1,\ell)$}}
\put(121,16){\footnotesize{$(k+2,\ell)$}}

\put(83.5,22.1){\footnotesize{$\sqrt{\frac{k+1}{k+\ell+2}}$}}
\put(107,22.1){\footnotesize{$\sqrt{\frac{k+2}{k+\ell+3}}$}}
\put(127,22.1){\footnotesize{$\sqrt{\frac{k+3}{k+\ell+4}}$}}

\put(83.5,44.6){\footnotesize{$\sqrt{\frac{k+1}{k+\ell+3}}$}}
\put(107,44.6){\footnotesize{$\sqrt{\frac{k+2}{k+\ell+4}}$}}
\put(127,44.6){\footnotesize{$\sqrt{\frac{k+3}{k+\ell+5}}$}}

\put(77,30.5){\footnotesize{$\sqrt{\frac{k+1}{k+\ell+2}}$}}
\put(101,30.5){\footnotesize{$\sqrt{\frac{k+1}{k+\ell+3}}$}}
\put(125,30.5){\footnotesize{$\sqrt{\frac{k+1}{k+\ell+4}}$}}

\put(77,53){\footnotesize{$\sqrt{\frac{k+2}{k+\ell+3}}$}}
\put(101,53){\footnotesize{$\sqrt{\frac{k+2}{k+\ell+4}}$}}
\put(125,53){\footnotesize{$\sqrt{\frac{k+2}{k+\ell+5}}$}}

\put(83.5,67){\footnotesize{$\sqrt{\frac{k+1}{k+\ell+4}}$}}
\put(107,67){\footnotesize{$\sqrt{\frac{k+2}{k+\ell+5}}$}}
\put(127,66.7){\footnotesize{$\cdots$}}

\end{picture}
\end{center}
\caption{(i) \ Weight diagram of a spherically quasinormal embedding of $W_{ \omega }$; (ii) \ weight diagram 
of $\left.W_{\theta(\omega)}\right|_{\mathcal{M}_k \cap \mathcal{N}_{\ell}}$ when $W_{\omega }$ is the Bergman shift.}
\label{Fig 2}
\end{figure}


\begin{proof}
Consider Figure \ref{Fig 2}(ii) and Figure \ref{Fig 4} with $W_{\omega }=B_{+}$. \
Since $W_{\left( \alpha(\omega),\beta(\omega)\right) }$ is
spherically quasinormal, it is also subnormal. \ Let $\mu $ be the Berger measure of 
$W_{\left( \alpha(\omega),\beta(\omega)\right) }$ and let $\sigma_{k_{2}}$ be the
Berger measure of $A_{k_{2}}$, respectively. \ Recall that
\begin{equation*}
\begin{tabular}{l}
$\sigma_{k_{2}}=\mu _{k_{2}-2}^{X}$, \; where \; $d\mu _{k_{2}}(s,t)=\frac{1}{\gamma
_{0k_{2}}}t^{k_{2}}d\mu (s,t)$.
\end{tabular}%
\end{equation*}%
Also, for $k_2 \ge 2$, $\sigma_{k_2}=\xi_{k_2-2}$. \ By Theorem \ref{Th4}, Figure \ref{Fig 2}(ii) and Figure \ref{Fig 4},
for all $k_{2}\geq 0$ and $\left( s,t\right) \in
G\left( f\right) =\left\{ \left( r,1-r\right) :r\in \left[ 0,1\right]
\right\} $, we can see that%
\begin{equation*}
d\mu _{k_{2}}(s,t)=\frac{1}{\gamma _{0k_{2}}}t^{k_{2}}d\mu (s,t)=\left(
k_{2}+1\right) t^{k_{2}}d\mu (s,t)=\left( k_{2}+1\right) (1-r)^{k_{2}}d\mu\left(r, 1-r\right).
\end{equation*}%
Thus, we have
\begin{equation} \label{eqk2}
d\xi _{k_{2}}\left( r\right) =d \left( \mu _{k_{2}}\left( s,t\right) \right)^X =\left(
k_{2}+1\right) \left( \left( 1-r\right) ^{k_{2}}d\mu \left(r,1-r\right)\right) ^{X}=(k_2+1)\left( 1-r\right) ^{k_{2}}d\sigma(r),
\end{equation}
where we have used Corollary \ref{newcor} to calculate the marginal measure. \ Therefore, the Berger measure of $A_{k_2}$ is $(k_{2}-1)(1-r)^{k_{2}-2}dr$
on the closed interval $[0,1]$. \ This completes the proof.
\end{proof}

\begin{remark}
(i) \ In the Proof of Corollary \ref{cor63} we chose to calculate the Berger measure of the Agler shifts by using the rows of the spherically isometric embedding. \ However, since the weight diagram in Figure \ref{Fig 4} is symmetric with respect to the diagonal, we could have chosen columns instead of rows, to perform the calculation. \ In that case, one gets
\begin{equation*}
d\eta _{k_{1}}\left( r\right) =d \left( \mu _{k_{1}}\left( s,t\right) \right)^Y =\left(
k_{1}+1\right) \left( r^{k_{1}}d\mu \left(r,1-r\right)\right) ^{Y}=(k_1+1)r^{k_{1}}d\sigma(1-r).
\end{equation*}
The substitution $u:=1-r$ shows that this answer is identical to that in (\ref{eqk2}). \newline
(ii) \ The reader may have noticed that our calculation of the Berger measure of the Agler shifts in Corollary \ref{cor63} is based on the theory of spherically isometric embeddings and the disintegration-of-measure techniques from \cite{CuYo2}. \ While this approach bypasses K. Zhu's result (Lemma \ref{Zhu}), it does not provide a formula for the measure $\mu$ (as we did in Theorem \ref{thm62}).
\end{remark}


\section{Application 3: Monomial Pairs} \label{examples}

In this section we first present an alternative solution to Problem \ref{prob1}(ii), using the theory of recursively generated weighted shifts. \ We then discuss two examples that illustrate the solution to Problem \ref{prob1}.

\begin{theorem}
(cf. Problem \ref{prob1}(ii)) \ Let $W_0$, $\sigma$ and $\mu$ be given as in Problem \ref{prob1}, assume that $W_0$ is recursively generated, with 
$$
\sigma=\rho_0 \delta_{s_0}+\cdots+\rho_{r-1}\delta_{s_{r-1}}
$$
with $0 \le s_{0}< \cdots < s_{r-1}\le c$, $\rho_0,\cdots \rho_{r-1}>0$, and $\rho_0+\cdots + \rho_{r-1}=1$. \ Then 
$$
\mu=\rho_0 \delta_{(s_0,c-s_0)}+\cdots+\rho_{r-1}\delta_{(s_{r-1},c-s_{r-1})}.
$$
\end{theorem}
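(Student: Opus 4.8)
The plan is to pin down $\mu$ directly from two constraints, without invoking the disintegration-of-measures machinery behind Theorem~\ref{Th4A}: an \emph{algebraic} constraint, coming from spherical quasinormality, which forces $\supp\,\mu$ onto a line segment; and a \emph{marginal} identification, which shows that the $X$-marginal of $\mu$ is exactly $\sigma$. Throughout, $c$ denotes the constant of spherical quasinormality, so that $\alpha_{\mathbf{k}}^2+\beta_{\mathbf{k}}^2=c$ for all $\mathbf{k}\in\mathbb{Z}_+^2$, and $\gamma_{(k_1,k_2)}$ denotes the $(k_1,k_2)$-moment of the Berger measure $\mu$.

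First I would extract the support constraint. Since $W_{(\alpha,\beta)}$ is spherically quasinormal, Theorem~\ref{Quasinormal}(iv) gives $\gamma_{(k_1+1,k_2)}+\gamma_{(k_1,k_2+1)}=c\,\gamma_{(k_1,k_2)}$ for all $(k_1,k_2)\in\mathbb{Z}_+^2$; writing these moments as integrals against $\mu$, this reads $\int\!\int(s+t-c)\,s^{k_1}t^{k_2}\,d\mu(s,t)=0$ for all $k_1,k_2\ge 0$. Using the three cases $(k_1,k_2)\in\{(0,0),(1,0),(0,1)\}$ together with the identity $(s+t-c)^2=(s+t-c)s+(s+t-c)t-c(s+t-c)$, I obtain $\int\!\int(s+t-c)^2\,d\mu=0$, hence $s+t=c$ $\mu$-almost everywhere. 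By \cite[Theorem~5.2]{CuYo7} (quoted in Problem~\ref{prob1}(ii)), $\mu$ is finitely atomic with $\supp\,\mu\subseteq\supp\,\sigma\times(c-\supp\,\sigma)=\{s_0,\dots,s_{r-1}\}\times\{c-s_0,\dots,c-s_{r-1}\}$. An atom $(s_i,c-s_j)$ of $\mu$ must satisfy $s_i+(c-s_j)=c$, so $s_i=s_j$, i.e. $i=j$ because the $s_i$ are distinct. Therefore $\mu=\sum_{i=0}^{r-1}a_i\,\delta_{(s_i,c-s_i)}$ for some $a_i\ge 0$ with $\sum_i a_i=1$.

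Then I would identify the weights $a_i$. The $0$-th row of $W_{(\alpha,\beta)}$ is precisely the shift $W_0$, whose Berger measure is $\sigma$ by hypothesis; on the other hand, for every $k\ge 0$ we have $\gamma_k(W_0)=\gamma_{(k,0)}=\int\!\int s^k\,d\mu(s,t)=\int s^k\,d\mu^X(s)$, so $\mu^X$ and $\sigma$ are probability measures supported on a compact interval with identical moments, whence $\mu^X=\sigma=\sum_i\rho_i\delta_{s_i}$. Comparing with $\mu^X=\sum_i a_i\delta_{s_i}$ and using once more that $s_0<\cdots<s_{r-1}$ are distinct, I conclude $a_i=\rho_i$ for all $i$, which is the asserted formula. (Alternatively, one can sidestep both steps at once: a routine induction on $k_2$, based on $\gamma_{(k_1,k_2+1)}=c\,\gamma_{(k_1,k_2)}-\gamma_{(k_1+1,k_2)}$, yields $\gamma_{(k_1,k_2)}=\int r^{k_1}(c-r)^{k_2}\,d\sigma(r)=\sum_i\rho_i s_i^{k_1}(c-s_i)^{k_2}$, which are precisely the moments of $\sum_i\rho_i\delta_{(s_i,c-s_i)}$; uniqueness of the Berger measure of the subnormal pair $W_{(\alpha,\beta)}$ then closes the argument.)

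There is no serious obstacle once the two constraints are in hand; the only points demanding care are that the line relation $s+t=c$ must be combined with the \emph{product}-set support bound $\supp\,\sigma\times(c-\supp\,\sigma)$ to rule out off-diagonal atoms $(s_i,c-s_j)$ with $i\ne j$, and that the $0$-th row of $W_{(\alpha,\beta)}$ is literally $W_0$, so that its Berger measure is $\sigma$ itself and not some reweighting of it. The finite atomicity of $\mu$ and the product-set support bound are exactly the facts we are permitted to import from \cite[Theorem~5.2]{CuYo7}.
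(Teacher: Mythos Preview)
Your proof is correct and follows essentially the same route as the paper's: both arguments first use the spherical-quasinormality moment recursion to force $\supp\,\mu$ onto the line $s+t=c$ (the paper phrases this via the generating function $h(s,t)=s+t-c$, you via the vanishing of $\int(s+t-c)^2\,d\mu$), then invoke a support bound from \cite{CuYo7} to pin the atoms at $(s_i,c-s_i)$, and finally read off the densities from the zeroth-row moments $\gamma_{(k,0)}=\gamma_k[\sigma]$ (the paper writes this out as an explicit Vandermonde system, you package it as $\mu^X=\sigma$). The only cosmetic difference is that the paper cites \cite[Corollary~5.6]{CuYo7} for $\supp\,\tau=c-\supp\,\sigma$ whereas you cite \cite[Theorem~5.2]{CuYo7} for the product-set bound; these play the same role.
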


\begin{proof}
Since $W_{0}:=\shift (\alpha _{\left( 0,0\right) },\alpha _{\left(
1,0\right) },\cdots )$ is a recursively generated weighted shift, by Lemma %
\ref{Lem 0} there exists a generating function $g_{0}\left( s\right) $ such
that the zeros of $g_{0}$ are the atoms of the Berger
measure of $W_{0}$. \ That is, if we let
\begin{equation}
g_{0}\left( s\right) =s^{r}-\left( \varphi _{r-1}s^{r-1}+\varphi
_{r-2}s^{r-2}+\cdots +\varphi _{1}s+\varphi _{0}\right) \text{,}  \label{1}
\end{equation}%
then $g_{0}\left( s\right) $ has $r$ distinct real roots $%
0<s_{0}<s_{1}<\cdots <s_{r-1} \le c$. \ Since $\alpha _{(k_{1},k_{2})}^{2}+\beta
_{(k_{1},k_{2})}^{2}=c>0$ for every $(k_{1},k_{2})\in \mathbb{Z}_{+}^{2}$,
we have
\begin{equation*}
\gamma _{(k_{1}+1,k_{2})}(W_{(\alpha ,\beta )})+\gamma
_{(k_{1},k_{2}+1)}(W_{(\alpha ,\beta )})=c\cdot \gamma
_{(k_{1},k_{2})}(W_{(\alpha ,\beta )})\text{.}
\end{equation*}%
It follows that we have a generating function $h\left( s,t\right) $ of $%
W_{(\alpha ,\beta )}$, given by
\begin{equation}
h\left( s,t\right):=s+t-c\text{.}  \label{2}
\end{equation}%
Thus, the atoms of the Berger measure $\mu $ for $W_{(\alpha ,\beta )}$ are
also zeros of $h$. \ Let $\tau $ be the Berger measure of $%
V_{0}:=\shift (\beta _{\left( 0,0\right) },\beta _{\left( 0,1\right)},\beta _{\left( 0,2\right)},\cdots )$. \ By \cite[Corollary 5.6]{CuYo7}
\begin{equation*}
\supp \; \tau =c-\supp \; \sigma :=\{c-s:s\in \supp \; \sigma \}%
\text{.}
\end{equation*}
It follows that the atoms of $\mu $ must be of the form $\left\{ \left(
s_{i},c-s_{i}\right) :0\leq i\leq r-1\right\} $. \ We now denote the densities of $\mu$ by $\xi_{i}$, so that
\begin{equation}
\mu =\xi _{0}\delta _{\left( s_{0},c-s_{0}\right) }+\cdots +\xi
_{r-1}\delta _{\left( s_{r-1},c-s_{r-1}\right) }\text{,}  \label{3}
\end{equation}%
where $\xi _{0},\xi _{1},\cdots ,\xi _{r-1}$ are positive real numbers which add up to $1$. \ We thus have
\begin{equation}
\begin{tabular}{l}
$\xi _{0}+\xi _{1}+\cdots +\xi _{r-1}=\gamma _{\left( 0,0\right) }=1$%
;\medskip \\
$\xi _{0}s_{0}+\xi _{1}s_{1}+\cdots +\xi _{r-1}s_{r-1}=\gamma _{\left(
1,0\right) }$;\medskip \\
$\xi _{0}s_{0}^2 +\xi _{1}s_{1}^2 +\cdots
+\xi _{r-1}s_{r-1}^2 =\gamma _{\left( 2,0\right) }$; \\
$\ \ \ \ \ \ \ \ \ \ \ \ \ \ \ \ \ \ \vdots $ \\
$\xi _{0}s_{0}^{r-1}+\xi _{1}s_{1}^{r-1}+\cdots +\xi _{r-1}s_{r-1}^{r-1}=\gamma _{\left(r-1,0\right) }$,
\end{tabular}
\label{4}
\end{equation}
Observe that from (\ref{4}) we obtain an
invertible $r\times r$ matrix
\begin{equation*}
V:=\left(
\begin{array}{cccc}
1 & 1 & \cdots & 1 \\
s_{0} & s_{1} & \cdots & s_{r-1} \\
s_{0}^{2} & s_{1}^{2} & \cdots & s_{r-1}^{2} \\
s_{0}^3 & s_{1}^3 & \cdots &
s_{r-1}^3 \\
\vdots & \vdots & \ddots & \vdots \\
s_{0}^{r-1} & s_{1}^{r-1} &
\cdots & s_{r-1}^{r-1}
\end{array}%
\right) _{\left( r\times r\right) }\text{.}
\end{equation*}
Then, we have
$$
\left(
\begin{array}{c}
\xi _{0} \\
\vdots \\
\xi _{r-1}%
\end{array}%
\right) =V^{-1}\left(
\begin{array}{c}
\gamma _{\left( 0,0\right) } \\
\gamma _{\left( 1,0\right) } \\
\gamma _{\left( 2,0\right) } \\
\gamma _{\left( 3,0\right) } \\
\vdots \\
\gamma _{(r-1,0)}%
\end{array}%
\right) _{\left( 1\times r\right) }
=V^{-1}
\left(
\begin{array}{c}
\gamma _{0}[\sigma] \\
\gamma _{1}[\sigma] \\
\gamma _{2}[\sigma] \\
\gamma _{3}[\sigma] \\
\vdots \\
\gamma _{r-1}[\sigma]
\end{array}
\right)
=
\left(
\begin{array}{c}
\rho _{0} \\
\vdots \\
\rho _{r-1}%
\end{array}%
\right).
$$
Thus, we have
\begin{equation} \label{mueq}
\mu=\rho_0 \delta_{(s_0,c-s_0)}+\cdots+\rho_{r-1}\delta_{(s_{r-1},c-s_{r-1})}.
\end{equation}
Therefore, (\ref{mueq}) provides an answer to Problem \ref{prob1} (both (i) and (ii)).
\end{proof}

\begin{example}
\label{Ex1}Consider a contractive spherically quasinormal $2$-variable
weighted shift $W_{(\alpha,\beta)}\equiv (T_{1},T_{2})$ given by
Figure \ref{Fig 2}(i), where $W_{\omega}:=\shift (\omega_0,\omega_1,\cdots )$ is subnormal with Berger
measure $\sigma:=\frac{1}{3}\delta _{\frac{1}{3}}+\frac{1}{3}\delta _{%
\frac{1}{2}}+\frac{1}{3}\delta _{1}$. \ Then $W_{(\alpha,\beta)}$ is subnormal with
$3$-atomic Berger measure $\mu $ which is given by%
\begin{equation*}
\mu =\frac{1}{3}\delta _{\left( \frac{1}{3},\frac{2}{3}\right) }+\frac{1}{3}%
\delta _{\left( \frac{1}{2},\frac{1}{2}\right) }+\frac{1}{3}\delta _{\left(
1,0\right) }\text{,}
\end{equation*}%
where $c=1$.

To see this, note that
\begin{equation*}
V=\left(
\begin{array}{ccc}
1 & 1 & 1 \\
s_{0} & s_{1} & s_{2} \\
s_{0}^{2} & s_{1}^{2} & s_{2}^{2}%
\end{array}%
\right) =\left(
\begin{array}{ccc}
1 & 1 & 1 \\
\frac{1}{3} & \frac{1}{2} & 1 \\
\frac{1}{9} & \frac{1}{4} & 1%
\end{array}%
\right) \text{ and }V^{-1}=\left(
\begin{array}{ccc}
\frac{9}{2} & -\frac{27}{2} & 9 \\
-4 & 16 & -12 \\
\frac{1}{2} & -\frac{5}{2} & 3%
\end{array}%
\right)
\end{equation*}%
Thus, we have%
\begin{equation*}
\left(
\begin{array}{c}
\rho _{0} \\
\rho _{1} \\
\rho _{2}%
\end{array}%
\right) =\left(
\begin{array}{ccc}
\frac{9}{2} & -\frac{27}{2} & 9 \\
-4 & 16 & -12 \\
\frac{1}{2} & -\frac{5}{2} & 3%
\end{array}%
\right) \left(
\begin{array}{c}
1 \\
\frac{11}{18} \\
\frac{49}{36}%
\end{array}%
\right) =\left(
\begin{array}{c}
\frac{1}{3} \\
\frac{1}{3} \\
\frac{1}{3}%
\end{array}%
\right) \text{,} 
\end{equation*}
as desired. \qed
\end{example}

Our last example shows how the Berger measure of a spherically quasinormal embedding can sometimes be calculated by elementary methods, bypassing Theorem \ref{Prop1}.

\begin{example}
\label{Quasinormal5}Consider the spherically quasinormal $2$-variable
weighted shift $W_{(\alpha,\beta)} \equiv (T_{1},T_{2})$ given by
Figure \ref{Fig 2}(i), where $W_{\omega}:=\shift (\omega_0, \omega_1, \cdots )$ is subnormal with Berger
measure $\sigma:=x\delta _{a}+y\delta _{b}+\left( 1-x-y\right) \delta _{1}$
($0\leq a<b<1$, $0<x,y<1$, and $x+y<1$). \ In Figure \ref{Fig 2}(i), assume that $c \ge 1$. \ Then $W_{(\alpha,\beta)}$ is subnormal with a three-atomic
Berger measure $\mu $ given by%
\begin{equation*}
\mu =x\delta _{\left( a,c-a\right) }+y\delta _{\left( b,c-b\right) }+\left(
1-x-y\right) \delta _{\left( 1,c-1\right) }\text{.}
\end{equation*}

Indeed, it follows from (\ref{2}) that $W_{(\alpha,\beta)}$ is subnormal with three-atomic
Berger measure. \ Thus, we have%
\begin{equation}
\mu =\rho_0\delta _{\left( a,c-a\right) }+\rho_1\delta _{\left( b,c-b\right) }+\left(
1-\rho_0-\rho_1 \right) \delta _{\left( 1,c-1\right) }\text{,}  \label{17}
\end{equation}%
and recall that $0<x,y<1$ and $x+y<1$. \ Since $0\leq a<b<1$, (\ref{Berger Theorem})
and (\ref{17}) imply that%
\begin{equation}
\lim_{k_{1}\rightarrow \infty }\gamma _{(k_{1},0)}\left(W_{(\alpha,\beta)}\right)
=1-\rho_0-\rho_1=\lim_{k_{1}\rightarrow \infty }\gamma _{k_{1}}\left( W_{\omega}\right)
=1-x-y \text{.}  
\end{equation}%
It follows readily that 
\begin{equation}
\rho_0+\rho_1=x+y. \label{21}
\end{equation}
By (\ref{17}) and (\ref{21}), for all $k_{1}\geq 0$, we have%
\begin{equation*}
\gamma _{(k_{1},0)}\left( W_{(\alpha,\beta)}\right) =\gamma _{k_{1}}\left(
W_{\omega}\right) \Longrightarrow \left( \rho_0-x\right) a^{k_{1}}+\left( \rho_1-y\right)
b^{k_{1}}=0\text{.}
\end{equation*}%
If $\rho_0-x\neq 0$, then
\begin{equation*}
0=\lim_{k_{1}\rightarrow \infty }\left( \frac{a}{b}\right) ^{k_{1}}=\frac{y-n}{%
m-x}\Longrightarrow n=y\text{,}
\end{equation*}%
which is a contradiction to (\ref{21}). \ Thus, we must have $\rho_0=x$, and a fortiori $\rho_1=y$. \ It follows that 
\begin{equation}
\mu =x\delta _{\left( a,c-a\right) }+y\delta _{\left( b,c-b\right) }+\left(
1-x-y\right) \delta _{\left( 1,c-1\right) }\text{,}  \label{measure 2}
\end{equation}%
as desired. \ Note in passing that the marginal measure $\mu ^{X}$ is $\sigma$. \qed
\end{example}


\section{Appendix} \label{appendix}

\subsection{Unilateral weighted shifts} \label{sub25}
\ For $\omega \equiv \{\omega _{n}\}_{n=0}^{\infty }$ a bounded sequence of
positive real numbers (called \textit{weights}), let $W_{\omega }\equiv
\shift (\omega _{0},\omega _{1},\cdots ):\ell ^{2}(\mathbb{Z}%
_{+})\rightarrow \ell ^{2}(\mathbb{Z}_{+})$ be the associated unilateral
weighted shift, defined by $W_{\omega }e_{n}:=\omega _{n}e_{n+1}\;$(all $%
n\geq 0$), where $\{e_{n}\}_{n=0}^{\infty }$ is the canonical orthonormal
basis in $\ell ^{2}(\mathbb{Z}_{+})$. \ The {\it moments} of $\omega \equiv
\{\omega _{n}\}_{n=0}^{\infty }$ are given as
\begin{equation}
\gamma _{k}\equiv \gamma _{k}(W_{\omega }):=\left\{
\begin{tabular}{ll}
$1$, & $\text{if }k=0$ \\
$\omega _{0}^{2}\cdots \omega _{k-1}^{2}$, & $\text{if }k \ge 1.$%
\end{tabular}%
\right.  \label{moment10}
\end{equation}
\medskip
The (unweighted) unilateral shift is $U_+:=\shift (1,1,1,\cdots)$, and for $0<a<1$ we let $S_{a}:=\shift (a,1,1,\cdots )$.

\medskip

We now recall a well known characterization of
subnormality for unilateral weighted shifts, due to C. Berger
(cf. \cite[III.8.16]{Con}) and independently established by Gellar and
Wallen (\cite{GeWa}): \ $W_{\omega }$ is subnormal if and only if there exists a probability measure $%
\sigma $ supported in $[0,\left\| W_{\omega }\right\| ^{2}]$ (called the 
\textit{Berger measure} of $W_{\omega }$) such that 
$$
\gamma _{k}(W_{\omega})=\omega _{0}^{2}\cdot ...\cdot \omega _{k-1}^{2}=\int t^{k}d\sigma (t) \; (k\geq 1).
$$

\medskip
\ Observe that $U_{+}$ and $S_{a}$ are subnormal, with Berger measures $\delta _{1}$ and $%
(1-a^{2})\delta _{0}+a^{2}\delta _{1}$, respectively, where $\delta _{p}$
denotes the point-mass probability measure with support the singleton set $%
\{p\}$. \ On the other hand, the Berger measure of the Bergman shift $B_{+}$ (acting on $A^2(\mathbb{D})$, and with weights $\omega_{n}:=\sqrt{\frac{n+1}{n+2}}\;(n\geq 0)$) is the Lebesgue measure on the interval $[0,1]$.


\subsection{Recursively generated unilateral weighted shifts} \label{RGWS}

We close this section with recalling some terminology and basic results from
\cite{CuFi1} and \cite{tcmp1}. \ A subnormal unilateral weighted shift $%
W_{\omega }$ is said to be \textit{recursively generated} if the sequence of
moments $\gamma _{n}$ admits a finite-step recursive relation; that is, if
there exists an integer $k\geq 1$ and real coefficients $\varphi
_{0},\varphi _{1},\cdots ,\varphi _{k-1}$ such that
\begin{equation}
\gamma _{n+k}=\varphi _{0}\gamma _{n}+\varphi _{1}\gamma _{n+1}+\cdots
+\varphi _{k-1}\gamma _{n+k-1}\;\;(\text{all }n\geq 0).  \label{receq}
\end{equation}%
In conjunction with (\ref{receq}) we consider the generating function
\begin{equation}
g_{\omega }(s):=s^{k}-(\varphi _{0}+\varphi _{1}s+\cdots +\varphi
_{k-1}s^{k-1}).  \label{receq2}
\end{equation}

The following result characterizes recursively generated subnormal
unilateral weighted shifts.

\begin{lemma}
\label{Lem 0} \cite{tcmp1} Let $W_{\omega }$ be a subnormal unilateral
weighted shift. \ The following statements are equivalent. \newline
(i) \ $W_{\omega }$ is recursively generated. \medskip \newline
(ii) \ The Berger measure $\sigma $ of $W_{\omega }$ is finitely atomic, and 
$g_{\omega}(s)=\prod_i (s-s_i)$, where $\supp \; \sigma = \{s_0,\cdots,s_{k-1}\}$.
\end{lemma}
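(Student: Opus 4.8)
The plan is to reduce everything to one observation. If $g(s)=s^{k}-(\varphi_{0}+\varphi_{1}s+\cdots+\varphi_{k-1}s^{k-1})$ is monic of degree $k$, then, since $\gamma_{n}=\int t^{n}\,d\sigma(t)$ by Berger's characterization of subnormality, one has the identity $\int t^{n}g(t)\,d\sigma(t)=\gamma_{n+k}-\sum_{j=0}^{k-1}\varphi_{j}\gamma_{n+j}$ for every $n\ge 0$. Hence the recursion (\ref{receq}) with these coefficients holds if and only if $\int t^{n}g(t)\,d\sigma(t)=0$ for all $n\ge 0$, which by linearity is the same as $\int p(t)g(t)\,d\sigma(t)=0$ for every polynomial $p$. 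Taking $p=g$ and using that $\sigma$ is a positive measure then forces $g$ to vanish $\sigma$-a.e., i.e. $\supp\sigma\subseteq Z(g)$, the finite zero set of $g$. I would isolate this as the first step, since both implications run through it.

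For $(i)\Rightarrow(ii)$, I would start from a recursion of minimal length $k$, with generating function $g_{\omega}$. The observation gives $\supp\sigma\subseteq Z(g_{\omega})$, so $\supp\sigma$ is finite, say $\supp\sigma=\{s_{0},\dots,s_{m-1}\}$ with $m\le k$ and $0\le s_{0}<\cdots<s_{m-1}$ (real, since $\supp\sigma\subseteq[0,\|W_{\omega}\|^{2}]$), and $\sigma=\sum_{i=0}^{m-1}\rho_{i}\delta_{s_{i}}$ with $\rho_{i}>0$. Then I would check $m=k$ by a Vandermonde argument: the monic polynomial $\tilde g(s):=\prod_{i=0}^{m-1}(s-s_{i})$ satisfies $\int t^{n}\tilde g(t)\,d\sigma(t)=\sum_{i}\rho_{i}s_{i}^{n}\tilde g(s_{i})=0$, so it produces a recursion of length $m$; minimality of $k$ gives $m\ge k$, hence $m=k$. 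Finally $g_{\omega}$ has degree $k$ and vanishes at $k$ distinct points, so $g_{\omega}(s)=\prod_{i=0}^{k-1}(s-s_{i})$.

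For $(ii)\Rightarrow(i)$, writing $\sigma=\sum_{i=0}^{k-1}\rho_{i}\delta_{s_{i}}$ with distinct atoms and $\rho_{i}>0$, I would set $g(s):=\prod_{i=0}^{k-1}(s-s_{i})=s^{k}-(\varphi_{0}+\cdots+\varphi_{k-1}s^{k-1})$; the computation $\int t^{n}g(t)\,d\sigma(t)=\sum_{i}\rho_{i}s_{i}^{n}g(s_{i})=0$ is exactly the recursion $\gamma_{n+k}=\sum_{j=0}^{k-1}\varphi_{j}\gamma_{n+j}$, so $W_{\omega}$ is recursively generated. To see that $g_{\omega}$ (the minimal generating function) equals this $g$, I would argue that a recursion of length $m<k$, with generating function $\hat g$, would force $\supp\sigma\subseteq Z(\hat g)$ and hence $k=\card(\supp\sigma)\le m<k$, a contradiction; so the minimal length is $k$ and $g_{\omega}=g=\prod_{i}(s-s_{i})$.

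I expect the only real care to be bookkeeping: making sure the integer $k$ in the factorization $g_{\omega}=\prod_{i}(s-s_{i})$ is simultaneously the minimal recursion length and the number of atoms of $\sigma$. Concretely, the zero-set observation always gives $\card(\supp\sigma)\le\deg g_{\omega}$, while producing a recursion from $\prod_{i}(s-s_{i})$ gives a recursion of length $\card(\supp\sigma)$, hence $\deg g_{\omega}\le\card(\supp\sigma)$ by minimality; the two bounds together pin down the degree and the factorization. Everything else is elementary, the only inputs being Berger's theorem and positivity of $\sigma$; a route via the rank of the Hankel moment matrix is possible but would not be shorter here.
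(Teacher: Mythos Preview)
The paper does not actually prove this lemma; it is stated in the Appendix as a known result and attributed to \cite{tcmp1} (Curto--Fialkow), so there is no ``paper's own proof'' to compare against. Your argument is correct and self-contained: the key identity $\int t^{n}g(t)\,d\sigma(t)=\gamma_{n+k}-\sum_{j}\varphi_{j}\gamma_{n+j}$, together with the trick of taking $p=g$ to force $g\equiv 0$ on $\supp\sigma$, is exactly the standard mechanism, and your minimality bookkeeping via the two inequalities $\card(\supp\sigma)\le\deg g_{\omega}$ and $\deg g_{\omega}\le\card(\supp\sigma)$ is clean and complete. This matches the approach in the original Curto--Fialkow reference; nothing is missing.
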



\subsection{$2$-variable weighted shifts} \label{sub30}

Consider now two double-indexed positive bounded sequences $\alpha _{%
\mathbf{k}},\beta _{\mathbf{k}}\in \ell ^{\infty }(\mathbb{Z}_{+}^{2})$, $%
\mathbf{k}\equiv (k_{1},k_{2})\in \mathbb{Z}_{+}^{2}$ and let $\ell ^{2}(%
\mathbb{Z}_{+}^{2})$\ be the Hilbert space of square-summable complex
sequences indexed by $\mathbb{Z}_{+}^{2}$. \ (Recall that $\ell ^{2}(\mathbb{%
Z}_{+}^{2})$ is canonically isometrically isomorphic to $\ell ^{2}(\mathbb{Z}%
_{+})\bigotimes \ell ^{2}(\mathbb{Z}_{+})$.) \ We define the $2$-variable
weighted shift $W_{(\alpha,\beta)}\equiv (T_{1},T_{2})$ as a pair of operators given by 
\begin{equation} \label{defT}
T_{1}e_{\mathbf{k}}:=\alpha _{\mathbf{k}}e_{\mathbf{k+}\varepsilon _{1}}%
\text{ and }T_{2}e_{\mathbf{k}}:=\beta _{\mathbf{k}}e_{\mathbf{k+}%
\varepsilon _{2}}\text{,}
\end{equation}
where $\mathbf{\varepsilon }_{1}:=(1,0)$ and $\mathbf{\varepsilon }%
_{2}:=(0,1)$. \ Clearly,
\begin{equation}
T_{1}T_{2}=T_{2}T_{1}\Longleftrightarrow \beta _{\mathbf{k+}\varepsilon
_{1}}\alpha _{\mathbf{k}}=\alpha _{\mathbf{k+}\varepsilon _{2}}\beta _{%
\mathbf{k}}\;\left( \text{all }\mathbf{k}\in \mathbb{Z}_{+}^{2}\right) .
\label{commuting}
\end{equation}%
Moreover, for $\mathbf{k} \in \mathbb{Z}_{+}^{2}$ we have
\begin{eqnarray} \label{adjoint}
T_{1}^*e_{0,k_2}=0 & \textrm{ and } & T_{1}^*e_{\mathbf{k}}=\alpha _{\mathbf{k-\varepsilon_{1}}}e_{\mathbf{k-}\varepsilon _{1}} \; (k_1 \ge 1); \\
T_{2}^*e_{k_1,0}=0 & \textrm{ and } & T_{2}^*e_{\mathbf{k}}:=\beta _{\mathbf{k-\varepsilon_{2}}}e_{\mathbf{k-\varepsilon_{2}}} \; (k_2 \ge 1).
\end{eqnarray}
In an entirely similar way one can define multivariable weighted shifts. \ The weight diagram of a generic $2$-variable weighted shift is shown in Figure \ref{Fig 1}(i). \ 

When all weights are equal to $1$ we obtain the so-called Helton-Howe shift; that is, the shift that corresponds to the pair of multiplications by the coordinate functions in the Hardy space $H^2(\mathbb{T} \times \mathbb{T})$ of the $2$-torus, with respect to normalized arclength measure on each unit circle $\mathbb{T}$ (cf. \cite{Gle}). \ This shift can also be represented as $(U_+ \otimes I, I \otimes U_+)$, acting on $\ell^2(\mathbb{Z}_+) \otimes \ell^2(\mathbb{Z}_+)$.

\medskip
We now recall the definition of {\it moments} for a commuting $2$-variable weighted shift $W_{(\alpha,\beta)}$. \ Given $\mathbf{k}%
\equiv (k_{1},k_{2})\in \mathbb{Z}_{+}^{2}$, the moment of $W_{(\alpha ,\beta )}$ of order $\mathbf{k}$ is
\begin{equation}
\gamma _{\mathbf{k}}\equiv \gamma _{\mathbf{k}}(W_{(\alpha ,\beta )}):=%
\begin{cases}
1, & \text{if }k_{1}=0\text{ and }k_{2}=0 \\
\alpha _{(0,0)}^{2}\cdots \alpha _{(k_{1}-1,0)}^{2}, & \text{if }k_{1}\geq 1%
\text{ and }k_{2}=0 \\
\beta _{(0,0)}^{2}\cdots \beta _{(0,k_{2}-1)}^{2}, & \text{if }k_{1}=0\text{
and }k_{2}\geq 1 \\
\alpha _{(0,0)}^{2}\cdots \alpha _{(k_{1}-1,0)}^{2}\beta
_{(k_{1},0)}^{2}\cdots \beta _{(k_{1},k_{2}-1)}^{2}, & \text{if }k_{1}\geq 1%
\text{ and }k_{2}\geq 1.%
\end{cases}
\label{moment0}
\end{equation}%
We remark that, due to the commutativity condition (\ref{commuting}), $%
\gamma _{\mathbf{k}}$ can be computed using any nondecreasing path from $%
(0,0)$ to $(k_{1},k_{2})$. 

To detect hyponormality, there is a simple criterion:

\begin{theorem}
(\cite{bridge}) \ (Six-point Test) \ Let $W_{(\alpha,\beta)}$ be a $2$%
-variable weighted shift, with weight sequences $\alpha $ and $\beta $. \
Then 
\begin{equation*}
W_{(\alpha,\beta)} \text{ is hyponormal }\Leftrightarrow \left( 
\begin{array}{cc}
\alpha _{\mathbf{k}+\mathbf{\varepsilon }_{1}}^{2}-\alpha _{\mathbf{k}}^{2}
& \alpha _{\mathbf{k}+\mathbf{\varepsilon }_{2}}\beta _{\mathbf{k}+\mathbf{%
\varepsilon }_{1}}-\alpha _{\mathbf{k}}\beta _{\mathbf{k}} \\ 
\alpha _{\mathbf{k}+\mathbf{\varepsilon }_{2}}\beta _{\mathbf{k}+\mathbf{%
\varepsilon }_{1}}-\alpha _{\mathbf{k}}\beta _{\mathbf{k}} & \beta _{\mathbf{%
k}+\mathbf{\varepsilon }_{2}}^{2}-\beta _{\mathbf{k}}^{2}%
\end{array}
\right) \geq 0\; \\ \;(\text{all }\mathbf{k}\in \mathbb{Z}_{+}^{2}).
\end{equation*}
\end{theorem}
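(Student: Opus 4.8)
The plan is to diagonalize the self‑commutator $[\mathbf{T}^*,\mathbf{T}]$, viewed as a self‑adjoint operator on $\mathcal{H}\oplus\mathcal{H}$ with $\mathcal{H}=\ell^2(\mathbb{Z}_+^2)$, by splitting $\mathcal{H}\oplus\mathcal{H}$ into an orthogonal direct sum of reducing subspaces of dimension at most two, and then to read off positivity block by block.

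First I would compute the four commutators $[T_i^*,T_j]$ $(i,j\in\{1,2\})$ on the canonical orthonormal basis $\{e_{\mathbf{k}}\}$ using the definitions (\ref{defT}) and the adjoint formulas (\ref{adjoint}). Each comes out to be of shift type on the basis: $[T_1^*,T_1]$ and $[T_2^*,T_2]$ are diagonal, with $[T_1^*,T_1]e_{\mathbf{k}+\varepsilon_1}=(\alpha_{\mathbf{k}+\varepsilon_1}^2-\alpha_{\mathbf{k}}^2)e_{\mathbf{k}+\varepsilon_1}$, $[T_1^*,T_1]e_{(0,k_2)}=\alpha_{(0,k_2)}^2 e_{(0,k_2)}$, and symmetrically for $[T_2^*,T_2]$; while $[T_2^*,T_1]$ sends $e_{\mathbf{k}+\varepsilon_2}$ to $(\alpha_{\mathbf{k}+\varepsilon_2}\beta_{\mathbf{k}+\varepsilon_1}-\alpha_{\mathbf{k}}\beta_{\mathbf{k}})e_{\mathbf{k}+\varepsilon_1}$, annihilates $e_{(k_1,0)}$, and $[T_1^*,T_2]=([T_2^*,T_1])^*$.

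Next I would assemble these into the decomposition. For each $\mathbf{k}\in\mathbb{Z}_+^2$ set $\mathcal{V}_{\mathbf{k}}:=\vee\{(e_{\mathbf{k}+\varepsilon_1},0),(0,e_{\mathbf{k}+\varepsilon_2})\}\subseteq\mathcal{H}\oplus\mathcal{H}$. The commutator formulas above show at once that $\mathcal{V}_{\mathbf{k}}$ is invariant (hence reducing, by self‑adjointness) for $[\mathbf{T}^*,\mathbf{T}]$, and that the $2\times 2$ matrix of the restriction $[\mathbf{T}^*,\mathbf{T}]|_{\mathcal{V}_{\mathbf{k}}}$ is exactly the matrix displayed in the statement. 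The remaining basis vectors $(e_{(0,k_2)},0)$ and $(0,e_{(k_1,0)})$ each span a one‑dimensional reducing subspace on which $[\mathbf{T}^*,\mathbf{T}]$ acts by the scalar $\alpha_{(0,k_2)}^2\ge 0$, resp.\ $\beta_{(k_1,0)}^2\ge 0$. I would then verify that these subspaces are pairwise orthogonal and together exhaust $\mathcal{H}\oplus\mathcal{H}$ (every $(e_{\mathbf{j}},0)$ sits in exactly one of them, according to whether $j_1\ge 1$ or $j_1=0$; likewise in the second summand), so that $[\mathbf{T}^*,\mathbf{T}]$ is block‑diagonal with respect to this orthogonal decomposition. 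Since a block‑diagonal self‑adjoint operator is positive if and only if each block is positive, and the one‑dimensional blocks are automatically nonnegative, positivity of $[\mathbf{T}^*,\mathbf{T}]$ is equivalent to positive semidefiniteness of every $2\times 2$ matrix above — which is precisely the Six‑point Test.

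The work here is organizational rather than conceptual: the main things to get right are the index shifts in the commutator computations and the careful verification that $\{\mathcal{V}_{\mathbf{k}}\}_{\mathbf{k}\in\mathbb{Z}_+^2}$ together with the boundary one‑dimensional pieces forms an orthogonal decomposition of the entire space, the edge cases $k_1=0$ and $k_2=0$ being exactly where one must be attentive. No measure theory or analytic input is needed beyond the elementary fact about positivity of block‑diagonal self‑adjoint operators.
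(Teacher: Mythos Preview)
The paper does not give its own proof of this theorem; it is stated in the Appendix and simply cited from \cite{bridge}. Your argument is correct and is essentially the standard one: the self-commutator $[\mathbf{T}^*,\mathbf{T}]$ on $\mathcal{H}\oplus\mathcal{H}$ block-diagonalizes with respect to the orthogonal decomposition into the two-dimensional subspaces $\mathcal{V}_{\mathbf{k}}=\vee\{(e_{\mathbf{k}+\varepsilon_1},0),(0,e_{\mathbf{k}+\varepsilon_2})\}$ together with the one-dimensional boundary pieces spanned by $(e_{(0,k_2)},0)$ and $(0,e_{(k_1,0)})$, and the $2\times 2$ blocks are exactly the matrices in the statement while the one-dimensional blocks carry the automatically nonnegative eigenvalues $\alpha_{(0,k_2)}^2$ and $\beta_{(k_1,0)}^2$. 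Your commutator computations and the bookkeeping at the edges $k_1=0$, $k_2=0$ are all correct.
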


A straightforward generalization of the above mentioned Berger-Gellar-Wallen result was proved in \cite{JeLu}. \ That is, a commuting pair $W_{(\alpha,\beta)}$ 
admits a commuting normal extension if and only if there is a
probability measure $\mu $ (which we call the Berger measure of $W_{(\alpha,\beta)}$)
 defined on the $2$-dimensional rectangle $R=[0,a_{1}]\times \lbrack
0,a_{2}]$ (where $a_{i}:=\left\| T_{i}\right\| ^{2}$) such that 
\begin{equation} \label{Berger Theorem}
\gamma _{\mathbf{k}}(W_{(\alpha,\beta)})
=\int t_{1}^{k_{1}}t_{2}^{k_{2}}\;d\mu (t_{1},t_{2})\;\;(\text{all }
\mathbf{k}\in \mathbb{Z}_+^2).
\end{equation}
\medskip

Thus, the study of subnormality for multivariable weighted shifts is
intimately connected to multivariable real moment problems. \ (For more on $2$-variable weighted shifts, see \cite{Cur}.)


\subsection{Disintegration of measures} \label{disintegration}

The theory of disintegration of measures is well known, and has been used in operator theory for more than forty years, especially in the study of composition operators, where the expectation operator has become a fundamental research tool. \ Most of the discussion in this Subsection is taken from \cite[VII.2, pp. 317-319]{Con}, \cite{CuYo2} and \cite{Yoo}. \ Let
$X$ and $Z$ be compact metric spaces and let $\mu $ be a positive regular
Borel measure on $Z$ and the space of all Borel measures on $Z$ is denoted
by $M(Z)$. \ For $\phi :Z\rightarrow X$ a Borel mapping, let $\nu $ be the
Borel measure $\mu \circ \phi ^{-1}$ on $X$; that is,
\begin{equation*}
\nu (A):=\mu (\phi ^{-1}(A))
\end{equation*}%
for all Borel sets $A$ contained in $X$. \ Let $\pounds ^{1}(\mu ):=\{f:f$
is Borel function on $Z$ such that $\ \int \left\vert f\right\vert d\mu
<\infty \}$, and let $L^{1}(\mu ):=\{[f]:f\in \pounds ^{1}(\mu )\}$, where $%
[f]:=\{g\in \pounds ^{1}(\mu ):\int \left\vert f-g\right\vert d\mu =0\}$. \
The map%
\begin{equation*}
\psi \rightarrow \int_{Z}(\psi \circ \phi )fd\mu \; \quad (f \in L^1(\mu))
\end{equation*}%
defines a bounded linear functional on$\ L^{\infty }(\nu ).$ \ If attention
is restricted to characteristic functions $\chi _{A}$ in $L^{\infty }(\nu )$%
,
\begin{equation*}
A\rightarrow \int_{Z}(\chi _{A}\circ \phi )fd\mu =\int_{\phi ^{-1}(A)}fd\mu
\end{equation*}%
is a countably additive measure defined on Borel sets in $X$, \ that is
absolutely continuous with respect to$\ \nu $. \ Hence there is a unique
element $E(f)$ in $L^{1}(\nu )$ such that
\begin{equation*}
\int_{Z}(\chi _{A}\circ \phi )fd\mu =\int_{X}\chi _{A}E(f)d\nu
\end{equation*}%
for all Borel subsets $A$ of $X.$ \ By an approximation argument one can
show that
\begin{equation*}
\int_{Z}(\psi \circ \phi )fd\mu =\int_{X}\psi E(f)d\nu
\end{equation*}%
for all $\psi $ in $L^{\infty }(\nu ).$ \ This defines a map
\begin{equation*}
E:\pounds ^{1}(\mu )\rightarrow L^{1}(\nu )
\end{equation*}%
called the \textit{expectation operator}.

\begin{definition}
\label{Def1} A disintegration of the measure $\mu $ with respect to $\phi $
is a function $t\mapsto \Phi _{t}$ from $X$ to $M(Z),$ such that\newline
(i) \ for each $t$ in $X,$ $\Phi _{t}$ is a probability measure;\newline
(ii)\ \ if $f\in \pounds ^{1}(\mu ),$ $E(f)(t)=\int_{Z}fd\Phi _{t}$ a.e. $%
[\nu ].$
\end{definition}

In the sequel we always assume that $X$ and $Y$ are compact metric spaces;
in case reference is made to weighted shifts, we assume that $X,Y\subseteq
\mathbb{R}_{+}$.

\begin{definition}
\label{defmarg}Given a measure $\mu $ on $X\times Y$, the marginal measure $%
\mu ^{X}$ is given by $\mu ^{X}:=\mu \circ \pi _{X}^{-1}$, where $\pi
_{X}:X\times Y\rightarrow X$ is the canonical projection onto $X$. \ Thus, $%
\mu ^{X}(E)=\mu (E\times Y)$, for every $E\subseteq X$. \ Observe that if $%
\mu $ is a probability measure, then so is $\mu ^{X}$.
\end{definition}

Then, we have:

\begin{theorem}
\label{thm:disintegration} \cite{Yoo} Let $\mu $ and $\Phi _{t}$ be as in
Definition \ref{Def1}. \ Define
\begin{equation*}
\mu^{Y}(G):=\mu (X\times G)\text{ }(\text{all }G\subseteq Y)\text{.}
\end{equation*}%
Then for almost every $t$ in $Y$ $($with respect to $\mu _{Y})$ and all
continuous functions $\phi (s,t)$ on the product space $X\times Y,$ the
measures $\mu ^{Y}$ and $\Phi _{t}$ satisfy%
\begin{equation}
\int_{X\times Y}\phi d\mu (s,t)=\int_{Y}(\int_{X}\phi (s,t)d\Phi
_{t}(s))d\mu ^{Y}(t).  \label{5}
\end{equation}
\end{theorem}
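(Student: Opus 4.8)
The plan is to run the general disintegration machinery recalled above in the special case $Z := X\times Y$, with the Borel map of Definition \ref{Def1} taken to be the projection $\pi_Y : X\times Y\to Y$; with this choice $\nu = \mu\circ\pi_Y^{-1} = \mu^Y$, and the defining relation of the expectation operator $E:\pounds^1(\mu)\to L^1(\mu^Y)$ becomes
\begin{equation*}
\int_{X\times Y}(\psi\circ\pi_Y)\,f\,d\mu = \int_Y\psi\,E(f)\,d\mu^Y\qquad(\psi\in L^\infty(\mu^Y),\ f\in\pounds^1(\mu)).
\end{equation*}
Combining this with clause (ii) of Definition \ref{Def1}, namely $E(f)(t)=\int_{X\times Y}f\,d\Phi_t$ for $\mu^Y$-a.e.\ $t$, yields the working identity
\begin{equation} \label{working}
\int_{X\times Y}(\psi\circ\pi_Y)\,f\,d\mu = \int_Y\psi(t)\Big(\int_{X\times Y}f\,d\Phi_t\Big)d\mu^Y(t).
\end{equation}
Throughout we use that $\mu$, and hence $\mu^Y$, is finite (indeed a probability measure in all our applications), so every bounded Borel function lies in $\pounds^1(\mu)$; and to avoid a clash with the Borel map of Definition \ref{Def1} the continuous test function in (\ref{5}) will be written $g\equiv g(s,t)$ rather than $\phi$.

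First I would verify that, for $\mu^Y$-a.e.\ $t$, the probability measure $\Phi_t$ is concentrated on the fiber $\pi_Y^{-1}(\{t\})=X\times\{t\}$. Putting $f=h\circ\pi_Y$ in (\ref{working}) for $h$ a bounded Borel function on $Y$, and observing that the left-hand side is then $\int_Y\psi h\,d\mu^Y$ for every $\psi$, we get $E(h\circ\pi_Y)=h$ a.e.\ $[\mu^Y]$, hence $\int_{X\times Y}(h\circ\pi_Y)\,d\Phi_t=h(t)$ for $\mu^Y$-a.e.\ $t$. Letting $h$ range over a countable sup-norm-dense subset of $C(Y)$ and intersecting the corresponding full-measure sets, we obtain $(\pi_Y)_*\Phi_t=\delta_t$ for $\mu^Y$-a.e.\ $t$; i.e., $\Phi_t(X\times\{t\})=1$ for such $t$. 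For those $t$ we identify $\Phi_t$ with its $X$-marginal $\Phi_t^X:=(\pi_X)_*\Phi_t$, a probability measure on $X$, so that $\int_{X\times Y}g\,d\Phi_t=\int_X g(s,t)\,d\Phi_t^X(s)$ for every bounded Borel $g$ on $X\times Y$; this is the meaning to be attached to the inner integral in (\ref{5}).

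Next I would establish (\ref{5}) for elementary tensors $g(s,t)=u(s)v(t)$ with $u\in C(X)$ and $v\in C(Y)$, by applying (\ref{working}) with $f(s,t):=u(s)$ and $\psi:=v$: the left-hand side is $\int_{X\times Y}u(s)v(t)\,d\mu(s,t)$, while the right-hand side equals $\int_Y v(t)\big(\int_X u(s)\,d\Phi_t^X(s)\big)d\mu^Y(t)=\int_Y\big(\int_X u(s)v(t)\,d\Phi_t^X(s)\big)d\mu^Y(t)$. By linearity, (\ref{5}) holds for every finite sum $\sum_i u_i(s)v_i(t)$. Such sums form a subalgebra of $C(X\times Y)$ that contains the constants and separates the points of the compact metric space $X\times Y$, hence is uniformly dense by the Stone--Weierstrass theorem. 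Since both sides of (\ref{5}) are linear functionals of $g\in C(X\times Y)$ of norm at most $\mu(X\times Y)$ — for the right-hand side one uses $|\int_X g(s,t)\,d\Phi_t^X(s)|\le\|g\|_\infty$ together with the $\mu^Y$-measurability of $t\mapsto\int_X g(s,t)\,d\Phi_t^X(s)$, which holds because this function is a uniform limit of the measurable functions attached to the approximating tensors — we may approximate an arbitrary $g\in C(X\times Y)$ uniformly by such sums and pass to the limit, obtaining (\ref{5}) in full generality.

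The main obstacle is the pair of measure-theoretic points above: proving that $\Phi_t$ is carried by the fiber $X\times\{t\}$ (without which the inner integral in (\ref{5}) is not even well-posed) and the $\mu^Y$-measurability of $t\mapsto\int_X g(s,t)\,d\Phi_t^X(s)$ (without which the outer integral in (\ref{5}) is meaningless). Once these are secured, the elementary-tensor computation and the Stone--Weierstrass density argument are routine.
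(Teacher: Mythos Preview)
The paper does not supply its own proof of this theorem; it is quoted from \cite{Yoo} as a background result in the Appendix, so there is no argument in the paper to compare against. Your proposal is a correct and self-contained proof: specializing the expectation-operator framework to $Z=X\times Y$ with $\phi=\pi_Y$, extracting the fiberwise concentration $(\pi_Y)_*\Phi_t=\delta_t$ from $E(h\circ\pi_Y)=h$, verifying (\ref{5}) on elementary tensors, and closing with Stone--Weierstrass is exactly the standard route, and you have handled the two genuine technical points (fiber concentration and measurability of the inner integral) with appropriate care.
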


\begin{theorem}
\label{DOM2} \cite{CuYo2} Let $\mu $ be the Berger measure of a subnormal $2$%
-variable weighted shift, and for $j\geq 0$ let $\xi _{j}$ be the Berger
measure of the associated $j$-th horizontal $1$-variable weighted shift $%
W_{j}=W_{\alpha ^{(j)}}$. \ Then\ $\xi _{j}=\mu _{j}^{X}$ (cf. Definition %
\ref{defmarg}), where $d\mu _{j}(s,t):=\frac{1}{\gamma _{0j}}t^{j}d\mu (s,t)$%
; more precisely,
\begin{equation}
d\xi _{j}(s)=\{\frac{1}{\gamma _{0j}}\int_{Y}t^{j}\;d\Phi _{s}(t)\}d\mu
^{X}(s),  \label{Marginal measure}
\end{equation}%
where $d\mu (s,t)\equiv d\Phi _{s}(t)d\mu ^{X}(s)$ is the disintegration of $%
\mu $ by vertical slices. \ A similar result holds for the Berger measure $%
\eta _{i}$ of the associated $i$-th vertical $1$-variable weighted shifts $%
W_{\beta ^{(i)}}\;(i\geq 0)$.
\end{theorem}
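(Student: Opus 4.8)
The plan is to establish the identity $\xi_j=\mu_j^X$ first at the level of moments and then promote it to an equality of measures via determinacy of the moment problem on a bounded interval. First I would record that the $j$-th row subspace $\mathcal{H}_j:=\bigvee\{e_{(k,j)}:k\ge 0\}$ is invariant under $T_1$, since $T_1e_{(k,j)}=\alpha_{(k,j)}e_{(k+1,j)}$; because $W_{(\alpha,\beta)}$ is (jointly) subnormal, $T_1$ is subnormal, hence so is its restriction $T_1|_{\mathcal{H}_j}$, which is unitarily equivalent to $W_{\alpha^{(j)}}=W_j$. This is what guarantees that the Berger measure $\xi_j$ exists, with $\gamma_k(W_j)=\int_X s^k\,d\xi_j(s)$ for all $k\ge 0$, and that $\supp\;\xi_j$ is contained in the bounded interval $[0,\|T_1\|^2]$.

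Next I would use the path-independence of the moments (the remark following $(\ref{moment0})$): evaluating $\gamma_{(k,j)}$ along the nondecreasing path $(0,0)\to(0,j)\to(k,j)$ gives $\gamma_{(k,j)}=\gamma_{(0,j)}\cdot\alpha_{(0,j)}^2\cdots\alpha_{(k-1,j)}^2=\gamma_{0j}\,\gamma_k(W_j)$. Then I would compute the moments of the marginal measure $\mu_j^X$ directly from Definition $\ref{defmarg}$ and the definition of $\mu_j$: since $\mu_j^X$ is the pushforward of $\mu_j$ under $\pi_X$ and $s^k$ depends only on $s$,
\[
\int_X s^k\,d\mu_j^X(s)=\int_{X\times Y}s^k\,d\mu_j(s,t)=\frac{1}{\gamma_{0j}}\int_{X\times Y}s^k t^j\,d\mu(s,t)=\frac{\gamma_{(k,j)}}{\gamma_{0j}}=\gamma_k(W_j),
\]
for every $k\ge 0$. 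Thus $\xi_j$ and $\mu_j^X$ are probability measures on the same compact interval with identical moment sequences, so by determinacy of the moment problem on a bounded interval, $\xi_j=\mu_j^X$.

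For the refined formula $(\ref{Marginal measure})$, I would insert the disintegration $d\mu(s,t)=d\Phi_s(t)\,d\mu^X(s)$ of $\mu$ by vertical slices — that is, Theorem $\ref{thm:disintegration}$ with the roles of $X$ and $Y$ interchanged, so that each $\Phi_s$ is a probability measure on $Y$ — into the expression just obtained: for any Borel set $E\subseteq X$,
\[
\xi_j(E)=\mu_j^X(E)=\frac{1}{\gamma_{0j}}\int_{E\times Y}t^j\,d\mu(s,t)=\frac{1}{\gamma_{0j}}\int_E\Bigl(\int_Y t^j\,d\Phi_s(t)\Bigr)d\mu^X(s),
\]
which is precisely $d\xi_j(s)=\bigl\{\frac{1}{\gamma_{0j}}\int_Y t^j\,d\Phi_s(t)\bigr\}\,d\mu^X(s)$. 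The analogous statement for the vertical shifts $W_{\beta^{(i)}}$ and their Berger measures $\eta_i$ then follows by the evident symmetry exchanging $(T_1,X,s)$ with $(T_2,Y,t)$.

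The path-independence bookkeeping and the moment match are routine; the two places requiring genuine care are the existence and essential uniqueness of the disintegration $\{\Phi_s\}_{s\in X}$, which is exactly the content of the disintegration theorem recalled in Subsection $\ref{disintegration}$ and uses that $X$ and $Y$ are compact metric spaces, and the determinacy of the moment problem, which relies on the supports being compact (equivalently, on $T_1,T_2$ being bounded). I expect the main subtlety to be handling the ``almost everywhere $[\mu^X]$'' qualifiers in the disintegration carefully, so that $(\ref{Marginal measure})$ is obtained as an honest equality of measures on $X$ rather than merely an a.e.\ identity of densities.
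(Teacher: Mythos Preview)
Your argument is correct. The moment identification $\int_X s^k\,d\mu_j^X(s)=\gamma_k(W_j)$ via path-independence, followed by determinacy of the Hausdorff moment problem on a compact interval, is exactly the natural route, and your handling of the disintegration formula is sound.

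One point of context: the paper does not supply its own proof of this theorem. It is stated in the Appendix as a background result quoted from \cite{CuYo2}, so there is no in-paper argument to compare against. That said, your proof is precisely the standard one underlying the cited result, and the places you flag as requiring care (existence of the disintegration, compactness for moment determinacy) are indeed the only genuine analytic inputs. A minor stylistic remark: the subnormality of $W_j$ can also be read off directly from the moment computation itself (you exhibit a representing measure $\mu_j^X$), so the preliminary appeal to ``restriction of a subnormal to an invariant subspace'' is not strictly needed, though it does no harm.
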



\bigskip
\noindent {\bf  Acknowledgment}. \ 
Some of the calculations in this paper were made with the software tool {\it Mathematica} \cite{Wol}.






\end{document}